\theoremstyle{plain}
\newtheorem{theorem}{Theorem}[section]
\newtheorem{proposition}[theorem]{Proposition}
\newtheorem{corollary}[theorem]{Corollary}
\newtheorem{remark}[theorem]{Remark}
\theoremstyle{definition}
\newtheorem{hypothesis}{Hypothesis}
\definecolor{bananamania}{rgb}{0.98, 0.91, 0.71}
\definecolor{bubbles}{rgb}{0.91, 1.0, 1.0}
\definecolor{lavenderblush}{rgb}{1.0, 0.94, 0.96}
\definecolor{lightred}{RGB}{255,210,202}
\definecolor{lightorange}{RGB}{255,234,199}
\definecolor{lightblue}{HTML}{EEF4F8}
\definecolor{lightyellow}{HTML}{FAF0E1}
\definecolor{lightgreen}{HTML}{F6FFE6}
\newcommand*{\defeq}{\mathrel{\vcenter{\baselineskip0.5ex \lineskiplimit0pt
			\hbox{\scriptsize.}\hbox{\scriptsize.}}}%
	=}
\newcommand{\R}{\mathbb{R}}
\newcommand{\bv}{\overline{v}}
\newcommand{\bu}{\overline{u}}
\newcommand{\bmu}{\overline{\mu}}
\newcommand{\nn}{\mathbf{n}}
\newcommand{\vK}{v_{K}}
\newcommand{\vL}{v_{L}}
\newcommand{\uK}{u_{K}}
\newcommand{\uL}{u_{L}}
\newcommand{\muK}{\mu_{K}}
\newcommand{\muL}{\mu_{L}}
\newcommand{\nabland}{\nabla_{\nn_e}^0}
\def\escalar#1#2{\left(#1,#2\right)}
\def\escalarL#1#2{\escalar{#1}{#2}}
\def\escalarLd#1#2{\escalar{#1}{#2}}
\def\escalarML#1#2{\escalar{#1}{#2}_h}
\def\norma#1{\left\|#1\right\|}
\def\salto#1{\left[\!\left[#1\right]\!\right]}
\def\media#1{\left\{\!\!\left\{#1\right\}\!\!\right\}}
\def\T{\mathcal{T}}
\def\E{\mathcal{E}}
\def\N{\mathbb{N}}
\def\X{\mathcal{X}}
\def\Ehi{\mathcal{E}_h^{\text{i}}}
\def\Pd{\mathbb{P}^{\text{disc}}}
\def\Pc{\mathbb{P}^{\text{cont}}}
\def\aupw#1#2#3{a_h^{\text{upw}}(#1;#2,#3)}
\title{\textbf{
An unconditionally energy stable and positive upwind DG scheme for the Keller-Segel model}}
\def\@fnsymbol#1{\ensuremath{\ifcase#1\or \dagger\or \ddagger\or \mathsection\or * \or\mathparagraph\or *\or **\or \dagger\dagger \or \ddagger\ddagger \else\@ctrerr\fi}}
\author{Daniel Acosta-Soba\thanks{Departamento de Matemáticas, Universidad de Cádiz, Puerto Real, 11510 Cádiz, Spain -- Email: \texttt{daniel.acosta@uca.es}} \thanks{Department of Mathematics, University of Tennessee at Chattanooga, Chattanooga, TN 37403, USA}~,
~Francisco Guillén-González\thanks{Departamento de Ecuaciones Diferenciales y Análisis Numérico \& IMUS, Universidad de Sevilla, 41012 Seville, Spain -- Email: \texttt{guillen@us.es}}~,
~J. Rafael Rodríguez-Galván\thanks{Departamento de Matemáticas, Universidad de Cádiz, Puerto Real, 11510 Cádiz, Spain -- Email: \texttt{rafael.rodriguez@uca.es} -- Corresponding author}}
\begin{document}

\bibliographystyle{abbrv} 

\maketitle

\begin{abstract}
The well-suited discretization of the Keller-Segel equations for chemotaxis has become a very challenging problem due to the convective nature inherent to them. This paper aims to introduce a new upwind, mass-conservative, positive and energy-dissipative discontinuous Galerkin scheme for the Keller-Segel model. This approach is based on the gradient-flow structure of the equations. In addition, we show some numerical experiments in accordance with the aforementioned properties of the discretization. The numerical results obtained emphasize the really good behaviour of the approximation in the case of chemotactic collapse, where very steep gradients appear.
\end{abstract}

\paragraph{Keywords:} Keller-Segel equations, chemotaxis, discontinuous Galerkin, upwind scheme, positivity preserving, energy stability.

\section{Introduction}
Since the introduction in the 70’s of the biological Keller–Segel
model for chemotaxis phenomena
\cite{keller_segel_1970,keller_segel_1971}, it and many related
variants have attracted a great deal of interest in the mathematical community.
Chemotaxis, a biological process through which organisms (e.g. cells)
migrate in response to a chemical stimulus, is modelled by means of
nonlinear systems of partial differential equations (PDE).
The classical one can be written as follows: find two real valued functions, $u=u(\mathbf{x},t)$
and $v=v(\mathbf{x},t)$, defined
in $\Omega\times [0,T]$ such that:
\begin{subequations}
	\label{problem:KS}
	\begin{align}
	\label{eq:KS_u}
	\partial_t u&=k_0\Delta u-k_1\nabla\cdot(u\nabla v),\quad&\text{in }\Omega\times (0,T),\\
	\label{eq:KS_v}
	\tau\partial_t v&=k_2\Delta v-k_3v+k_4u,\quad&\text{in }\Omega\times (0,T),\\
	\partial_\mathbf{n}u& :=\nabla u\cdot \mathbf{n}=0,\quad
	\partial_\mathbf{n} v 
	=0, \quad &\text{on }\partial\Omega\times (0,T),\\
	\label{eq:CI.cahn-hilliard+adveccion}
	u(0)&=u_0,\quad v(0)=v_0\text{ if }\tau>0,\quad&\text{in }\Omega.
	\end{align}
\end{subequations}
Herein $\Omega$ is a bounded and smooth domain of $\mathbb{R}^d$, with
$d\in\mathbb{N}$ the spatial dimension,  and the parameters are $k_i>0$ for
$i\in\{0,1,2,3,4\}$.
The mathematical formulation of~\eqref{problem:KS} can be interpreted in
biological terms as follows: $u$ denotes a certain cell distribution
(or population of organisms, in general) at the position $\mathbf{x}\in \Omega$
and time $t\in [0,T]$, whereas $v$ stands for the concentration of
chemoattractant
(i.e. a chemical signal towards which cells are induced to migrate).
Both cells and chemoattractant experiment some diffusion
in the spatial domain.

This auto-diffusion phenomena (experimented by cells and chemoattractant)
are designed by the terms $-k_0\Delta u$ and $-k_2\Delta v$,
while the migration mechanism is modeled by the nonlinear
cross-diffusion term $-k_1\nabla\cdot(u\nabla v)$. This term is the major
difficulty for theoretical analysis and also for numerical modelling
of system~\ref{problem:KS}. Further, the degradation and production
of chemoattractant are associated with the terms $-k_3v$ and $k_4u$,
respectively. Note that the production of chemoattractant by the
cells, to which cells are attracted, may eventually result in a
chemotactic collapse, a phenomenon in which uncontrolled aggregation
for u give rise to blowing up or exploding in finite time.
This feature is well known and constitutes
one of the outstanding characteristics of classical Keller-Segel
model, and also one of its main challenges, specifically for numerical
methods. Finally, the coefficient $\tau\in\{0, 1\}$ is considered to write at the same time  the parabolic system when $\tau=1$, or the parabolic-elliptic for $\tau=0$.

Regarding the mathematical analysis for the system~\eqref{problem:KS}: some
results on sufficient conditions to ensure global existence and
boundedness of solutions along time can be shown (see e.g. the
review of Bellomo et al~\cite{bellomo2015toward} and the references
therein). They are based on mass conservation for $u$ and on
an energy dissipation law for this model (see Section~\ref{sec:keller-segel}).  For dimension $d\ge 2$, these results
require the initial density of cells, $\int_\Omega u_0$, to be bigger
than certain threshold. On the other hand, considerable research has
been done in the direction of finding cases where chemotactic collapse
arise. Among them, it is worth mentioning the first result in this
direction, due to Herrero and Velázquez~\cite{herrero1997blow}, where
radially symmetric two-dimensional solutions which finite-time blow up
are found. Other authors shed light on more general cases, for
instance Horstmann and Wang~\cite{horstmann2001blow} (non symmetric
blow-up solutions) or Winkler~\cite{winkler2010aggregation} (higher
dimensional case).

In the last decades, a lot of papers have been published dealing with
these kinds of theoretical issues both for the classical
model~\eqref{problem:KS} and for other models based on some extensions
or generalizations. In general, they start from the Keller–Segel
classical equations and modify them with the purpose of avoiding 
the non-physical blow up of solutions,
producing solutions which are closer to the ``real chemotaxis''
phenomena observed in biology. Models include logistic, non-linear
diffusion or production terms, chemo-repulsion effects or coupling
with fluid equations
\cite{tello2007chemotaxis-logistic,tao2012global,frassu2021improvements,chen2021uniquenessKS-NS,winkler2012global-NS}.
See e.g.~\cite{bellomo2015toward, arumugam_keller-segel_2021} for more examples. Thus, it is
hoped that understanding the classical Keller–Segel equations may open
new insights for dealing in depth with those other chemotaxis models.

On the other hand, taking into account the considerable efforts of the mathematical community in
the theoretical analysis of Keller-Segel models, the number of papers
dedicated to numerical analysis and simulation of chemotaxis equations
is much lower, in relative terms. The main difficulty is the numerical
approximation of the cross-diffusion term. Not only for its
non-linearity, but due to its convective nature, which
makes particularly difficult to deal with using the finite element (FE) method (see, for instance, \cite{brenner_mathematical_2008,ern_theory_2010} for more details about this method). This difficulty is specifically significant in steep-gradient
regions for $v$, which are precisely relevant in blow-up settings. Furthermore, preserving the physical properties of the continuous model (mass conservation, positivity and energy dissipation) in the discrete case adds an extra level of complication when it comes to designing a well-suited approximation.

Despite that, many interesting works have been published on numerical
simulation of chemotaxis equations using different kinds of approaches. For instance, the work by
Saito~\cite{saito_conservative_2007} uses FE with upwind stabilization for the
parabolic-elliptic Keller–Segel model ($\tau=0$ in~\eqref{problem:KS}) showing mass conservation, positivity and error estimates. Also, Guti\'errez-Santacreu and Rodríguez-Galv\'an demonstrate positivity, an energy law and a priori bounds for their FE scheme on acute meshes in \cite{gutierrez-santacreu_analysis_2021}. In addition, other sophisticated techniques have been applied to this problem. This is the case of the finite volume (FV) method (we recommend \cite{leveque_2002} on this topic) which has become a very popular and successful approach as we can observe in papers like \cite{chertock_second-order_2008} by Chertock and
Kurganov in which the authors devised positive preserving
methods for the parabolic-parabolic formulation ($\tau=1$ in~\eqref{problem:KS}), with demonstrated
high accuracy and robustness, specifically on chemotactic collapse. It
might also be pointed out the works of Saad and others, for instance
in~\cite{saad2014efficacy}, where a volume finite element scheme for
the capture of spatial patterns for a volume-filling chemotaxis is
analyzed. 

In this sense, it is also worth mentioning the very recent works \cite{badia_bound-preserving_2022, shen_unconditionally_2020, chen_error_2022, huang_bound_preserving} that show and analyze different techniques to approximate the solution of \eqref{problem:KS} while achieving mass conservation, positivity and energy-dissipation for a strictly positive initial cell condition. In the paper by Bad\'ia et al. \cite{badia_bound-preserving_2022} a discrete scheme using stabilized FE with a graph-Laplacian operator and a shock detector is proposed. This discretization satisfies, in general, both the mass conservation and positivity properties, and, in the case of acute meshes, it is also energy-dissipative. Otherwise, Huang and Shen develop in \cite{huang_bound_preserving} a time-discrete approximation,  admitting any spatial discretization, that preserves the positivity for the cell distribution $u$ and is energy stable for a modified energy. This latest approach uses a suitable transformation of the solution for the positivity and the scalar auxiliary variable (SAV) technique for the energy stability. Alternatively, Shen and Xu introduced in \cite{shen_unconditionally_2020} another general, positive (for the cell distribution $u$) and energy-dissipative, time-discrete scheme based on the gradient flow structure of the continuous model that admits different kinds of spatial discretization such as FE, spectral methods or even  finite-differences. The order of the latest approach and the blow-up phenomenon of the discrete solution, under a CFL condition, is studied in \cite{chen_error_2022} by Chen et al.

Furthermore, discontinuous Galerkin (DG)  methods (we refer the reader to \cite{di_pietro_mathematical_2012, dolejsi_discontinuous_2015, riviere_discontinuous_2008} for a further insight) aroused the interest of
researchers in recent years due to their flexibility for
approximating, using standard meshes and computer libraries, different
types of PDEs: elliptic, parabolic, hyperbolic.  In the chemotaxis
context, it is worth mentioning the paper of Y.~Epshteyn and
A. Kurganov~\cite{epshteyn-kurganov_2009new}, where the FV scheme
given in~\cite{chertock_second-order_2008} for the 2D Keller-Segel
model~\eqref{problem:KS} is extended to a DG scheme on cartesian
meshes, obtaining good approximations even on blow-up regimes.
Y. Epshteyn introduced two other related schemes
in~\cite{epshteyn2009discontinuous}. In all cases, different
discontinuous Interior Penalty (IP) methods and upwinding techniques
were considered for defining DG approximations. The schemes are even
applied to the simulation of an haptotaxis model of tumor invasion
into healthy tissue. Error estimates are shown but no energy property
or maximum principle for the schemes is proven. In fact, spurious
oscillations and negative values in the solution are reported.

More recent works make further progress in this direction, for instance
in~\cite{zhang2016operator-split-DG}, where the classical
equations~\eqref{problem:KS} are approximated by a
positivity-preserving DG method with strong stability preserving (SSP)
high order time discretizations. Error order estimates as well as
positivity are shown in this work. Finally,
in~\cite{li_local_2017,guo_energy_2019}, the local discontinuous
Galerkin method is applied, showing respectively positivity and
energy dissipation.

In this work, we propose a new upwind DG scheme for the Keller-Segel model~\eqref{problem:KS} that preserves the mass-conservation, positivity and energy stability properties of the continuous problem. As in \cite{shen_unconditionally_2020}, the proposed discretization takes advantage of the gradient flow structure of the model. First, section~\ref{sec:notation} sets the notation that we are going to consider throughout the paper. In section~\ref{sec:keller-segel} we discuss the physical properties of the continuous model. Section~\ref{sec:esquema_completamente_discreto} is the main part of the paper, in which we introduce the upwind DG scheme \eqref{esquema_DG_upw_KS_non_truncated}. In particular, in section~\ref{sec:def_upw}, we define the upwind approximation based on the ideas introduced in \cite{acosta-soba_upwind_2022} along with some geometrical considerations for the mesh family $\T_h$, and we discuss the properties of the scheme in section~\ref{sec:properties_scheme}. Finally, in section~\ref{sec:numer-experiments}, we show several numerical tests in which we reproduce some blow-up results shown in the literature with one steady peak, \cite{chertock_second-order_2008}, and a peak moving towards the corner of the domain, \cite{saito_conservative_2007}, as well as pattern formation results with several peaks, \cite{andreianov_finite_2011, chamoun_monotone_2014, tyson_fractional_2000, fatkullin_study_2013}. These numerical experiments endorse the good behaviour of the approximation obtained with the new scheme, which allows us to capture peaks reaching values up to the order of $10^{7}$. These kinds of numerical results are rare in the literature due to the steep-gradients inherent to such sort of tests.

\section{Notation}
\label{sec:notation}
In this section we introduce the notation used throughout the paper. First, we consider a shape-regular triangular mesh $\T_h=\{K\}_{K\in \T_h}$ of size $h$ over  a bounded polygonal domain $\Omega\subset\R^d$. Moreover, we note the set of edges or faces of $\T_h$ by $\E_h$, which can be split into the \textit{interior edges} $\E_h^i$ and the \textit{boundary edges} $\E_h^b$. Then, $\E_h=\E_h^i\cup\E_h^b$.

Now, we fix the following orientation for the unit normal vector $\nn_e$ associated to and edge $e\in\E_h$ of the mesh $\T_h$:
\begin{itemize}
	\item If $e\in\E_h^i$ is shared by the elements $K$ and $L$, i.e. $e=\partial K\cap\partial L$, then $\nn_e$ is exterior to $K$ pointing to $L$ (see Figure~\ref{fig:orientation_n_e}).
	\item If $e\in\E_h^b$, then $\nn_e$ points outwards of the domain $\Omega$.
\end{itemize}

\begin{figure}
	\centering
	\includegraphics[scale=1.0]{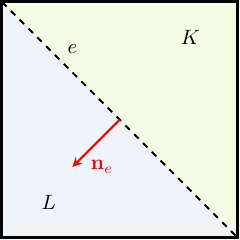}
	\caption{Orientation of the unit normal vector $\nn_e$.}
	\label{fig:orientation_n_e}
\end{figure}

Moreover, we denote the baricenter of the triangle $K\in\T_h$ by $C_K$.

Now, we define the approximation spaces of discontinuous, $\Pd_k(\T_h)$,  and continuous, $\Pc_k(\T_h)$, finite element functions over $\T_h$ whose restriction to $K\in\T_h$ are polynomials of degree $k\ge 0$. In addition, the \textit{average} $\media{\cdot}$ and the \textit{jump} $\salto{\cdot}$ of a scalar function $v$ on an edge $e\in\E_h$  are defined as follows:

\begin{equation*}
\media{v}\defeq
\begin{cases}
\dfrac{\vK+\vL}{2}&\text{if } e\in\E_h^i\\
\vK&\text{if }e\in\E_h^b
\end{cases},
\qquad
\salto{v}\defeq
\begin{cases}
\vK-\vL&\text{if } e\in\E_h^i\\
\vK&\text{if }e\in\E_h^b
\end{cases}.
\end{equation*}

Regarding the time discretization, we take an equispaced partition $0=t_0<t_1<\cdots<t_N=T$ of the time domain $[0,T]$ with $\Delta t=t_{m+1}-t_m$ the time step. We denote $v^m\simeq v(t_m)$ for any function $v$ defined on $[0,T]$ and define the discrete time derivative $\delta_t v^{m+1}=(v^{m+1}-v^m)/\Delta t$.

Finally we set the following notation for the positive and negative parts of a scalar function $v$:
$$
v_\oplus\defeq\frac{|v|+v}{2}=\max\{v,0\},
\quad
v_\ominus\defeq\frac{|v|-v}{2}=-\min\{v,0\},
\quad
v=v_\oplus - v_\ominus.
$$

\section{Keller-Segel model}
\label{sec:keller-segel}

Let us consider the Keller-Segel system~\eqref{problem:KS}.

\begin{remark}
	\label{rmk:ppo_maximo_KS}
	There is a classic solution of the Keller-Segel problem~\eqref{problem:KS} at least local in time which is positive, i.e., $u, v\ge 0$ in $\Omega\times (0,T)$ whenever $u_0\ge 0$ and $v_0\ge0$ in $\Omega$. See, for instance, \cite{bellomo2015toward,diaz1995symmetrization}.
	
	To the best knowledge of the authors, the existence of global solutions in time is still not clear in the literature.
\end{remark}

Assume $u_0, v_0\ge0$ in $\Omega$. The weak formulation of the problem~\eqref{problem:KS} consists of finding $(u,v): [0,T]\times \Omega\to \R_+\times \R_+$ regular enough, i.e. $u(t), v(t)\in V$ for a certain regular Sobolev space $V$ (for instance, $V=W^{1,\infty}(\Omega)$), with $\partial_t u(t),\tau \partial_t v(t)\in V'$ a.e. $t\in(0,T)$,
satisfying the following variational problem a.e. $t\in(0,T)$:
\begin{subequations}
	\label{problem:KS_form_var}
	\begin{align}
	\label{eq:KS_form_var_1}
	\langle \partial_t u(t),\bu \rangle &=-k_0\escalarLd{\nabla u(t)}{\nabla\bu}+k_1\escalarLd{u(t)\nabla v(t)}{\nabla\bu}, &\forall\bu\in V,\\
	\label{eq:KS_form_var_2}
	\langle \tau\partial_tv(t),\bv \rangle&=-k_2 \escalarLd{\nabla v(t)}{\nabla\bv}-k_3\escalarL{ v(t)}{\bv}+k_4\escalarL{u(t)}{\bv},&\forall\bv\in V,
	\end{align}
\end{subequations}
and the initial conditions $u(0)=u_0$, $v(0)=v_0$ in $\Omega$. Hereafter, $\escalarL{\cdot}{\cdot}$ and $\langle\cdot,\cdot\rangle$ denote the scalar product in $L^2(\Omega)$ and the duality product in $V'$, respectively.

By taking, formally, the chemical potential of $u$,
\begin{equation}
\label{def:mu}
\mu=k_0\log(u) - k_1 v,
\end{equation}
we can rewrite \eqref{problem:KS_form_var} as a gradient flow system where the flux direction is given by $-\nabla\mu$ containing the effect of both the diffusion and the chemotaxis terms (see, for instance, \cite{blanchet_hybrid_2015}). This variational formulation consists of finding $(u,\mu, v): [0,T]\times \Omega\to \R_+\times\R\times\R_+$ regular enough, i.e. $u(t), \mu(t), v(t)\in V$ for a certain regular Sobolev space $V$ (for instance, $V=W^{1,\infty}(\Omega)$), with $\partial_t u(t),\tau \partial_t v(t)\in V'$ a.e. $t\in(0,T)$,
satisfying the following variational problem a.e. $t\in(0,T)$:
\begin{subequations}
	\label{problem:KS_form_var_mu}
	\begin{align}
	\label{eq:KS_form_var_mu_1}
	\langle \partial_t u(t),\bu \rangle &=-\escalarLd{u(t)\nabla \mu(t)}{\nabla\bu}, &\forall\bu\in V,\\
	\label{eq:KS_form_var_mu_2}
	\escalarL{\mu(t)}{\bmu} &=\escalarL{k_0\log(u(t))-k_1v(t)}{\bmu}, &\forall\bmu\in V,\\
	\label{eq:KS_form_var_mu_3}
	\langle \tau\partial_tv(t),\bv \rangle&=-k_2 \escalarLd{\nabla v(t)}{\nabla\bv}-k_3\escalarL{ v(t)}{\bv}+k_4\escalarL{u(t)}{\bv},&\forall\bv\in V,
	\end{align}
\end{subequations}
and the initial conditions $u(0)=u_0$, $v(0)=v_0$ in $\Omega$.

\begin{remark}
	\label{rmk:mass_conservation_KS}
	By taking $\bu=1$ in 
	\eqref{eq:KS_form_var_1} (or \eqref{eq:KS_form_var_mu_1}), any solution $u$
	conserves the mass, because
	$$\frac{d}{dt}\int_\Omega u(x,t)dx=0.$$
\end{remark}

\begin{remark}  By taking (formally) $\bu=\mu(t)$, $\bmu=\partial_t u(t)$
	and $\bv=(k_1/k_4)\partial_t v(t) $ 
	in \eqref{problem:KS_form_var_mu},
	and adding the resulting expressions, one has that any solution $(u,v)$
	satisfies the following  energy law
	\begin{align}
	\label{ley_energia_continua_KS}
	\frac{d }{dt}E(u(t),v(t))&+\tau\frac{k_1}{k_4}\int_\Omega |\partial_t v(t)|^2dx
	+ \int_\Omega u(t)\left\vert\nabla(\mu(t))\right\vert^2 dx =0,
	\end{align}
	where $E\colon H^1(\Omega)_+\times H^1(\Omega)\longrightarrow\R$ is the energy functional, defined as follows
	\begin{align}
	\label{energia_KS}
	E(u,v)&\defeq\int_\Omega \left(k_0u\log(u)-k_1uv+\frac{k_1k_2}{2k_4}\vert\nabla v\vert ^2+\frac{k_1k_3}{2k_4}v^2\right),
	\end{align}
	and $H^1(\Omega)_+=\{u\in H^1(\Omega)\colon u\ge 0\}$.
\end{remark}

\section{Fully discrete scheme}
\label{sec:esquema_completamente_discreto}

First, we regularize the chemical potential of $u$, defined in~\eqref{def:mu}, by
\begin{equation}
\label{eq:mu}
\mu_\varepsilon=k_0\log(u+\varepsilon)-k_1 v,
\end{equation}
for some $\varepsilon>0$. Note that $\varepsilon$ is a regularization parameter since $\log(u+\varepsilon)$ is regular for all $u\ge 0$. We will take $\varepsilon=\varepsilon(h,\Delta t)$ such that $\varepsilon(h,\Delta t)\to 0$ if $(h,\Delta t)\to 0$, being $h>0$ the mesh size and $\Delta t>0$ the time step.

Then, we propose the following decoupled fully discrete first order in time and upwind DG in space  scheme for the model~\eqref{problem:KS}:

\
Let $v^{m}\in \Pc_1(\T_h)$ and $u^{m}\in \Pd_0(\T_h)$ such that $v^m\ge 0$ in the case $\tau>0$ and $u^m\ge 0$ be given.

\

\begin{subequations}
	\underline{Step 1}: Find  $v^{m+1}\in \Pc_1(\T_h)$ solving
	\label{esquema_DG_upw_KS_non_truncated}
	\begin{align}
	\label{eq:esquema_DG_upw_KS_non_truncated_1}
	\tau\escalarML{\delta_tv^{m+1}}{\bv}&+k_2\escalarLd{\nabla v^{m+1}}{\nabla\bv}+k_3\escalarML{v^{m+1}}{\bv}-k_4\escalarL{u^m}{\bv}=0,
	\end{align}
	for all $\bv\in \Pc_1(\T_h)$.

	\
	
	\underline{Step 2}: Find $(u^{m+1},\mu^{m+1})\in \Pd_0(\T_h)\times \Pd_0(\T_h)$ with $u^{m+1}\ge 0$ solving the coupled problem
	\begin{align}
	\label{eq:esquema_DG_upw_KS_non_truncated_2}
	\escalarL{\delta_tu^{m+1}}{\bu}&+\aupw{\mu^{m+1}}{u^{m+1}}{\bu}=0,\\
	\label{eq:esquema_DG_upw_KS_non_truncated_3}
	\escalarL{\mu^{m+1}}{\bmu}&-k_0 \escalarL{\log(u^{m+1}+\varepsilon)}{\bmu}+ k_1\escalarL{v^{m+1}}{\bmu}=0,
	\end{align}
	for all $\bu,\bmu\in \Pd_0(\T_h)$,
\end{subequations}
where $\aupw{\cdot}{\cdot}{\cdot}$ will be defined below in section~\ref{sec:def_upw}.

Notice that \ref{eq:esquema_DG_upw_KS_non_truncated_1} is a linear problem for $v^{m+1}$ and that \eqref{eq:esquema_DG_upw_KS_non_truncated_2}--\eqref{eq:esquema_DG_upw_KS_non_truncated_3} is a coupled nonlinear problem for $(u^{m+1},\mu^{m+1})$. In fact, we are going to use Newton's method as iterative procedure approximating the scheme \eqref{eq:esquema_DG_upw_KS_non_truncated_2}--\eqref{eq:esquema_DG_upw_KS_non_truncated_3}.

In order to preserve the positivity of $v^{m+1}$, we have done mass lumping in the terms $\escalarML{\partial_t v^{m+1}}{\bv}$ and	$k_3\escalarML{v^{m+1}}{\bv}$ in \eqref{eq:esquema_DG_upw_KS_non_truncated_1}.

In section~\ref{sec:properties_scheme}, we will provide a way of computing the solution of \eqref{esquema_DG_upw_KS_non_truncated} enforcing the nonnegativity restriction $u^{m+1}\ge 0$.

\subsection{Definition of upwind bilinear form \boldmath{$\aupw{\cdot}{\cdot}{\cdot}$}}
\label{sec:def_upw}
Now, we are going to define the upwind bilinear form $\aupw{\cdot}{\cdot}{\cdot}$, introduced in the scheme \eqref{esquema_DG_upw_KS_non_truncated}.

In order to achieve the energy stability with the scheme \eqref{esquema_DG_upw_KS_non_truncated}, we must consider the following hypothesis that will let us approximate the flux $-\nabla\mu$ accordingly:
\begin{hypothesis}
	\label{hyp:mesh_n}
	The mesh $\T_h$ of $\overline\Omega$ is structured in the sense that the line between the baricenters of the triangles $K$ and $L$ is orthogonal to the interface $e=K\cap L\in\E_h^i$.
\end{hypothesis}

Then, we define the following upwind bilinear form
to be applied to the flux $-\nabla\mu$ which may be discontinuous over $\E_h^i$:
\begin{equation}
\label{def:aupw}
\aupw{\mu}{u}{\bu}\defeq \int_\Omega (\nabla\mu\cdot\nabla\bu)u
+ \sum_{e\in\E_h^i,e=K\cap L}\int_e\left(\left(-\nabland\mu\right)_\oplus\uK-(-\nabland\mu)_\ominus\uL\right)\salto{\bu},
\end{equation}
where, for every $e\in\E_h^i$ with $e=K\cap L$,
\begin{equation}
\label{eq:approx_gradn}
\nabland\mu =\frac{-\salto{\Pi_0\mu}}{\mathcal{D}_e(\T_h)}= \frac{\Pi_0\muL-\Pi_0\muK}{\mathcal{D}_e(\T_h)},
\end{equation}
with $\Pi_0$ being the projection on $\Pd_0(\T_h)$ and $\mathcal{D}_e(\T_h)$ the distance between the baricenters of the triangles $K$ and $L$ of the mesh $\T_h$ that share $e\in\Ehi$, denoted by $C_K$ and $C_L$, respectively.
This way, we can rewrite \eqref{def:aupw} as
\begin{align}
\label{def:aupw_baricenter}
\aupw{\mu}{u}{\bu}\defeq &\int_\Omega (\nabla\mu\cdot\nabla\bu)u
\nonumber\\ &+ \sum_{e\in\E_h^i,e=K\cap L}\frac{1}{\mathcal{D}_e(\T_h)}\int_e\left(\left(\salto{\Pi_0\mu}\right)_\oplus\uK-(\salto{\Pi_0\mu})_\ominus\uL\right)\salto{\bu}.
\end{align}

\begin{remark}
	\label{rmk:approx_grad_mu}
	Since the quadrature formula of the baricenter (or centroid) is exact for polynomials of order $1$, if $\mu\in\Pd_1(\T_h)$, we have that
	$$\Pi_0\mu|_K=\oint_K\mu=\frac{1}{\vert K\vert}\int_K\mu=\mu(C_K),$$
	where $C_K$ is the baricenter of $K\in\T_h$.
	
	Hence, if $\mu\in\Pd_1(\T_h)$, the expression \eqref{eq:approx_gradn} is the slope of the line between the points $(C_K, \mu(C_K))$ and $(C_L,\mu(C_L))$, which, under the Hypothesis~\ref{hyp:mesh_n}, this line is parallel to the vector $\nn_e$, with $e=K\cap L\in\E_h^i$. This expression is considered as an approximation of the discontinuous numerical normal flux $\nabla\mu\cdot\nn_e$ for $\mu\in\Pd_1(\T_h)$.
	
	In addition, observe that, since the baricenters are located $1/3$ of the median from the side and $2/3$ of the median from the vertex of the triangle, the expression \eqref{eq:approx_gradn} does not degenerate when $h\to 0$, i.e., $\mathcal{D}_e(\T_h)>0$ for every $e\in\E_h^i$ and $h>0$. A visual representation of the regular polygonal structure given by the lines between the adjacent baricenters is given in Figure~\ref{fig:polygonal_baricenters}.
	
	\begin{figure}
		\centering
		\includegraphics[scale=1.0]{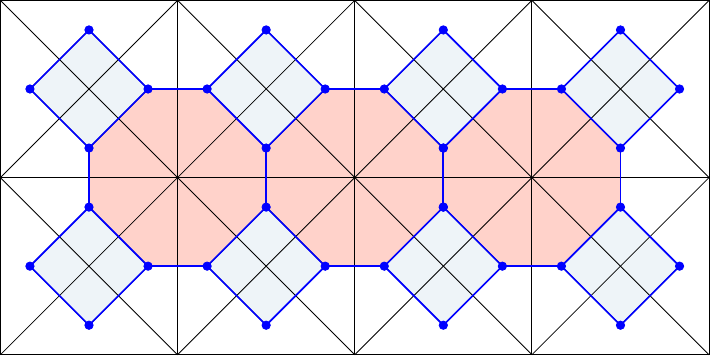}
		\caption{Polygonal structure between adjacent baricenters.}
		\label{fig:polygonal_baricenters}
	\end{figure}
\end{remark}

Furthermore, in order to preserve the positivity of the variable $v$ in our fully discrete scheme, we assume the following hypothesis.
\begin{hypothesis}
	\label{hyp:mesh_acute}
	The mesh $\T_h$ is acute, i.e., the angles of the triangles of $T_h$ are less than or equal to $\pi/2$.
\end{hypothesis}

\begin{figure}
	\centering
	\begin{subfigure}{0.49\textwidth}
		\centering
		\includegraphics[scale=1.0]{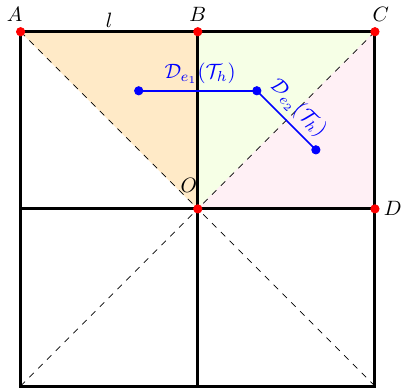}
		\caption{Mesh 1}
	\end{subfigure}
	\begin{subfigure}{0.49\textwidth}
		\centering
		\includegraphics[scale=1.0]{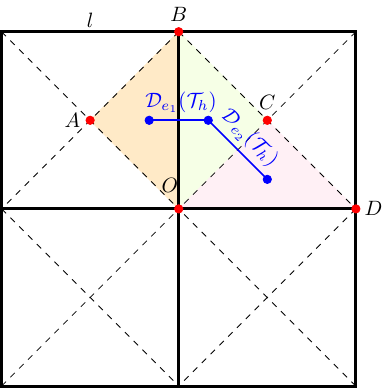}
		\caption{Mesh 2}
	\end{subfigure}
	\caption{Representation of $\mathcal{D}_e(\T_h)$.}
	\label{fig:representation_D_bar}
\end{figure}

We give some examples, the meshes represented in Figure~\ref{fig:representation_D_bar}, which satisfy both Hypotheses~\ref{hyp:mesh_n} and~\ref{hyp:mesh_acute}.

\begin{theorem}
	Given the meshes represented in Figure~\ref{fig:representation_D_bar} we can define $\mathcal{D}_e(T_h)$ for these meshes as follows
	$$
	\text{a) \underline{Mesh 1}: } \mathcal{D}_e(\T_h)=\frac{2l^2}{3\vert e\vert},
	\quad
	\text{b) \underline{Mesh 2}: } \mathcal{D}_e(\T_h)=\frac{l^2}{3\vert e\vert},
	$$
	where $l$ is the length of the side of the highlighted squares of the mesh.
\end{theorem}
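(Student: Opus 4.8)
The plan is to reduce the statement to one elementary geometric identity, valid on \emph{any} mesh satisfying Hypothesis~\ref{hyp:mesh_n}, and then to obtain the two formulas by plugging in the areas of the triangles of Mesh~1 and Mesh~2. So first I would prove an auxiliary fact about a single triangle $T$ with a side $s$: the distance from the baricenter of $T$ to the line containing $s$ equals $\dfrac{2\,|T|}{3\,|s|}$. This is immediate from barycentric coordinates: writing a point as $\alpha P_1+\beta P_2+\gamma P_3$ with $\alpha+\beta+\gamma=1$ and $s=P_1P_2$, the distance to the line of $s$ is an affine function of the point that vanishes at $P_1,P_2$ and equals the height $h$ of $T$ relative to $s$ at $P_3$, hence it equals $\gamma\,h$; the baricenter has $\gamma=1/3$, and $h=2|T|/|s|$ by the area formula.

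Next I would combine this with the orthogonality built into Hypothesis~\ref{hyp:mesh_n}. For an interior edge $e=K\cap L$, the baricenters $C_K\in K$ and $C_L\in L$ lie on opposite sides of the line $\ell_e$ containing $e$, and the segment $C_KC_L$ is orthogonal to $e$, hence to $\ell_e$; therefore $C_K$ and $C_L$ project onto the same point of $\ell_e$, and
\[
\mathcal{D}_e(\T_h)=|C_KC_L|=\mathrm{dist}(C_K,\ell_e)+\mathrm{dist}(C_L,\ell_e)=\frac{2|K|}{3|e|}+\frac{2|L|}{3|e|}=\frac{2\,(|K|+|L|)}{3\,|e|}.
\]
This identity is the heart of the proof and already explains the $1/|e|$ dependence in the statement.

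Finally I would specialize to the two meshes using the explicit vertex coordinates of Figure~\ref{fig:representation_D_bar}, after checking that both meshes do satisfy Hypothesis~\ref{hyp:mesh_n}. In Mesh~1 every triangle is the right isosceles triangle obtained by cutting a square of side $l$ along one diagonal, so $|K|=|L|=l^2/2$ for every interior edge (whether $e$ is a side of a square, $|e|=l$, or a diagonal, $|e|=l\sqrt2$), which gives $\mathcal{D}_e(\T_h)=\dfrac{2(l^2/2+l^2/2)}{3|e|}=\dfrac{2l^2}{3|e|}$. In Mesh~2 every triangle is obtained by cutting such a square along \emph{both} diagonals, so $|K|=|L|=l^2/4$, which gives $\mathcal{D}_e(\T_h)=\dfrac{2(l^2/4+l^2/4)}{3|e|}=\dfrac{l^2}{3|e|}$.

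I do not expect a genuine obstacle: the only points needing a little care are the verification that Mesh~1 and Mesh~2 indeed satisfy Hypothesis~\ref{hyp:mesh_n} (so that the perpendicularity used in the displayed identity is available) and that in each mesh all triangles sharing any given interior edge have the stated common area; both follow by inspection from the coordinates in Figure~\ref{fig:representation_D_bar}. The substantive content is simply the $\gamma=1/3$ observation for baricenters together with $h=2|T|/|s|$.
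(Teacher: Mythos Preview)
Your argument is correct and takes a genuinely different route from the paper. The paper proceeds by brute-force coordinate computation: it writes the baricenters of three adjacent triangles of Mesh~1 as $(O+A+B)/3$, $(O+B+C)/3$, $(O+C+D)/3$, computes the two distances $|C-A|/3=2l/3$ and $|D-B|/3=\sqrt2\,l/3$ directly, and then notices a posteriori that both can be written as $2l^2/(3|e|)$; Mesh~2 is dismissed as analogous. Your approach instead isolates a clean general identity,
\[
\mathcal{D}_e(\T_h)=\frac{2(|K|+|L|)}{3|e|},
\]
valid on any mesh satisfying Hypothesis~\ref{hyp:mesh_n}, from which both formulas drop out by inserting the triangle areas $l^2/2$ (Mesh~1) or $l^2/4$ (Mesh~2). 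This is more conceptual, makes the $1/|e|$ dependence transparent rather than a coincidence, and would immediately handle other structured meshes without recomputing coordinates. The paper's version, by contrast, requires no appeal to Hypothesis~\ref{hyp:mesh_n} in the computation itself (the orthogonality is never used; distances between explicit points are just evaluated), so it is marginally more self-contained for these two specific meshes but offers no insight into why the formula takes the form it does.
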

\begin{proof}
	We will only prove the case a) Mesh 1 since the case b) Mesh 2 is analogous.
	
	Observe Mesh 1 in Figure~\ref{fig:representation_D_bar}. The baricenter of the triangles $\triangle OAB$, $\triangle OBC$ and $\triangle OCD$ are, respectively $\frac{O+A+B}{3}$, $\frac{O+B+C}{3}$ and $\frac{O+C+D}{3}$. Hence, if we denote by $e_1$ the edge between $\triangle OAB$ and $\triangle OBC$ and by $e_2$ the edge between $\triangle OBC$ and $\triangle OCD$, then
	$$
	\mathcal{D}_{e_1}(\T_h)=\frac{\vert C-A\vert}{3}=\frac{2l}{3},
	\quad
	\mathcal{D}_{e_2}(\T_h)=\frac{\vert D-B\vert}{3}=\frac{\sqrt{2}l}{3}
	$$
	Now, since $l/\vert e_1\vert =1$ and $l/\vert e_2\vert=1/\sqrt{2}$, we can define $\mathcal{D}_e(\T_h)$ for any $e\in\Ehi$ as
	\begin{equation}
	\mathcal{D}_e(\T_h)=\frac{2l}{3}\cdot \frac{l}{\vert e\vert}=\frac{2l^2}{3\vert e\vert}.
	\end{equation}
\end{proof}

\subsection{Properties of the scheme}
\label{sec:properties_scheme}
Finally, we discuss the different properties of the scheme \eqref{esquema_DG_upw_KS_non_truncated} with the upwind bilinear form defined in \eqref{def:aupw} on meshes under the Hypotheses~\ref{hyp:mesh_n} and~\ref{hyp:mesh_acute}.

With the purpose of proving the existence of solution of the scheme \eqref{esquema_DG_upw_KS_non_truncated} and providing a way of computing it enforcing the restriction $u^{m+1}\ge 0$, we define the following auxiliary scheme where a cut-off operator is introduced in \eqref{eq:esquema_DG_upw_KS_non_truncated_2}:

\

\begin{subequations}
	\label{esquema_DG_upw_KS_truncated}
	\underline{Step 1}: Given $v^m\in\Pc_1(\T_h)$ such that $v^m\ge 0$ in the case $\tau >0$, find  $v^{m+1}\in \Pc_1(\T_h)$ solving
	\begin{align}
	\label{eq:esquema_DG_upw_KS_truncated_v}
	\tau\escalarML{\delta_tv^{m+1}}{\bv}&+k_2\escalarLd{\nabla v^{m+1}}{\nabla\bv}+k_3\escalarML{v^{m+1}}{\bv}-k_4\escalarL{u^m}{\bv}=0,
	\end{align}
	for all $\bv\in \Pc_1(\T_h)$.

	\
	
	\underline{Step 2}:  Given $u^m, \mu^m\in\Pd_0(\T_h)$ such that $u^m\ge 0$, find $u^{m+1},\mu^{m+1}\in \Pd_0(\T_h)$ solving
	\begin{align}
	\label{eq:esquema_DG_upw_KS_truncated_u}
	\escalarL{\delta_tu^{m+1}}{\bv}&+\aupw{\mu^{m+1}}{(u^{m+1})_\oplus}{\bu}=0,\\
	\label{eq:esquema_DG_upw_KS_truncated_mu}
	\escalarL{\mu^{m+1}}{\bmu}&-k_0 \escalarL{\log(u^{m+1}+\varepsilon)}{\bmu}+ k_1\escalarL{v^{m+1}}{\bmu}=0,
	\end{align}
	for all $\bu,\bmu\in \Pd_0(\T_h)$. 
\end{subequations}
\begin{remark}
	In order to preserve the positivity of the solution $u^{m+1}$ of \eqref{esquema_DG_upw_KS_truncated}, we have introduced a truncation of this function taking its positive part $(u^{m+1})_\oplus$ in the upwind part of \eqref{eq:esquema_DG_upw_KS_truncated_u}, which is consistent as the solution of the continuous model \eqref{problem:KS} satisfies $u\ge 0$.

	Since Theorem~\ref{thm:principio_del_maximo_DG_KS} below will guarantee  that
 $u^{m+1}\ge 0$, then $\escalarL{\log(u^{m+1}+\varepsilon)}{\bmu}$ in \eqref{eq:esquema_DG_upw_KS_truncated_u} is well-defined.
\end{remark}

\begin{proposition}
	\label{prop:discrete_mass_conservation}
	The schemes \eqref{esquema_DG_upw_KS_non_truncated} and \eqref{esquema_DG_upw_KS_truncated} conserve the mass of $u$:
	$$\int_\Omega u^{m+1}=\int_\Omega u^m.$$
\end{proposition}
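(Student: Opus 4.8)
The plan is to test the cell equation \eqref{eq:esquema_DG_upw_KS_2} with the constant function $\bu\equiv 1$, which is an admissible test function since $\Pd_0(\T_h)$ contains all mesh-wise constants, and then to check that the upwind term $\aupw{\mu_\varepsilon^{m+1}}{(u^{m+1})_\oplus}{1}$ drops out. This is the discrete counterpart of taking $\bu=1$ in \eqref{eq:KS_form_var_mu_1}, exactly as in Remark \ref{rmk:mass_conservation_KS}.

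First I would inspect the definition \eqref{def:aupw} (equivalently \eqref{def:aupw_baricenter}) of $\aupw{\cdot}{\cdot}{\cdot}$ applied with $\bu\equiv 1$. The volume contribution $\int_\Omega(\nabla\mu_\varepsilon^{m+1}\cdot\nabla\bu)\,(u^{m+1})_\oplus$ vanishes because $\nabla\bu=0$ (indeed it would also vanish since $\mu_\varepsilon^{m+1}\in\Pd_0(\T_h)$ has $\nabla\mu_\varepsilon^{m+1}=0$ elementwise). Every interior-edge integrand in \eqref{def:aupw} carries the factor $\salto{\bu}$, and $\salto{1}=1-1=0$ on each $e\in\Ehi$ by the definition of the jump. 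Hence $\aupw{\mu_\varepsilon^{m+1}}{(u^{m+1})_\oplus}{1}=0$, regardless of the sign of the normal increments $\nabland\mu_\varepsilon^{m+1}$ and regardless of the positivity truncation $(u^{m+1})_\oplus$.

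It then only remains to note that $\escalarL{\delta_t u^{m+1}}{1}=\frac{1}{\Delta t}\int_\Omega\bigl(u^{m+1}-u^m\bigr)$, so that \eqref{eq:esquema_DG_upw_KS_2} with $\bu\equiv 1$ collapses to $\int_\Omega u^{m+1}=\int_\Omega u^m$, which is the assertion. There is essentially no obstacle: the argument uses neither Hypothesis \ref{hyp:mesh_n} nor Hypothesis \ref{hyp:mesh_acute}, nor the structure of Step~1; the only point to verify is that both pieces of the upwind form are proportional to $\nabla\bu$ or to $\salto{\bu}$, both of which are zero for a global constant.
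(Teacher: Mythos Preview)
Your proposal is correct and follows exactly the paper's own approach, which is simply ``take $\overline{u}=1$ in \eqref{eq:esquema_DG_upw_KS_2}.'' You have merely spelled out the details (vanishing of $\nabla\bu$ and $\salto{\bu}$) that the paper leaves implicit.
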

\begin{proof}
	Just need to take $\overline{u}=1$ in \eqref{eq:esquema_DG_upw_KS_non_truncated_2} and \eqref{eq:esquema_DG_upw_KS_truncated_u}.
\end{proof}

\begin{theorem}[DG \ref{esquema_DG_upw_KS_truncated} preserves positivity]
	\label{thm:principio_del_maximo_DG_KS}
	
	If we assume that $u^m\ge 0$ and, in the case $\tau>0$, $v^m\ge 0$ in $\Omega$, then any solution of \ref{esquema_DG_upw_KS_truncated} satisfies that $u^{m+1}, v^{m+1}\ge 0$ in $\Omega$.
\end{theorem}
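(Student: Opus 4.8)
The plan is to prove the two sign bounds separately, in the order in which the scheme produces the unknowns.

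\emph{Positivity of $v^{m+1}$.} I would regard \eqref{eq:esquema_DG_upw_KS_1} as a linear problem on $\Pc_1(\T_h)$ and test it against the Lagrange nodal basis $\{\phi_i\}$, which turns it into a linear system $B\,\mathbf v^{m+1}=\mathbf b$ with $B=\big(\tfrac{\tau}{\Delta t}+k_3\big)M_L+k_2A$, where $M_L$ is the diagonal, strictly positive lumped mass matrix produced by the mass-lumped terms of \eqref{eq:esquema_DG_upw_KS_1} and $A_{ij}=\escalarLd{\nabla\phi_i}{\nabla\phi_j}$ is the $\Pc_1$ stiffness matrix. By Hypothesis~\ref{hyp:mesh_acute} the mesh is acute, so $A$ has non-positive off-diagonal entries (standard for $\Pc_1$ stiffness matrices on acute meshes) while $A\mathbf 1=\mathbf 0$; since $k_2,k_3>0$ and $\tau\ge 0$, the matrix $B$ then has positive diagonal, non-positive off-diagonal entries and is strictly diagonally dominant (the strictness coming from the $k_3M_L$ term), hence a nonsingular $M$-matrix, and $B^{-1}\ge 0$ entrywise. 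The load vector has entries $\mathbf b_i=\tfrac{\tau}{\Delta t}(M_L\mathbf v^{m})_i+k_4\int_\Omega u^m\phi_i\ge 0$, because $\phi_i\ge 0$, $u^m\ge 0$ and, when $\tau>0$, $v^m\ge 0$. Hence $\mathbf v^{m+1}=B^{-1}\mathbf b\ge\mathbf 0$, and since a $\Pc_1$ function is non-negative exactly when its nodal values are, $v^{m+1}\ge 0$ in $\Omega$.

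\emph{Positivity of $u^{m+1}$.} Here I would argue cell by cell. Since $\mu_\varepsilon^{m+1}\in\Pd_0(\T_h)$, its gradient vanishes elementwise and $\Pi_0\mu_\varepsilon^{m+1}=\mu_\varepsilon^{m+1}$, so the volume term of $\aupw{\cdot}{\cdot}{\cdot}$ drops out. Writing $u_K,\mu_K$ for the constant values of $u^{m+1},\mu_\varepsilon^{m+1}$ on an element $K\in\T_h$ and testing \eqref{eq:esquema_DG_upw_KS_2} with the characteristic function $\mathbf 1_K$, the form \eqref{def:aupw_baricenter} collapses to
\begin{equation*}
	\frac{|K|}{\Delta t}\,(u_K-u_K^m)+\sum_{e=K\cap L\in\Ehi}\frac{|e|}{\mathcal{D}_e(\T_h)}\Big((\mu_K-\mu_L)_\oplus\,(u_K)_\oplus-(\mu_K-\mu_L)_\ominus\,(u_L)_\oplus\Big)=0,
\end{equation*}
where $L=L(e)$ denotes the element sharing the interior edge $e$ with $K$. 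A short computation shows that this identity does not depend on the orientation chosen for $\nn_e$ (the cases $\nn_e\colon K\to L$ and $\nn_e\colon L\to K$ coincide, using $(a)_\oplus=(-a)_\ominus$), and boundary edges contribute nothing because the upwind sum in \eqref{def:aupw} runs only over $\Ehi$ (in accordance with the homogeneous Neumann condition). Now suppose $u_K<0$ for some $K$. Then $(u_K)_\oplus=0$, the first summand drops, and the identity rearranges to
\begin{equation*}
	u_K=u_K^m+\frac{\Delta t}{|K|}\sum_{e=K\cap L\in\Ehi}\frac{|e|}{\mathcal{D}_e(\T_h)}\,(\mu_K-\mu_L)_\ominus\,(u_L)_\oplus\ \ge\ 0,
\end{equation*}
because $u_K^m\ge 0$, $\mathcal{D}_e(\T_h)>0$ (Remark~\ref{rmk:approx_grad_mu}), and every summand is a product of non-negative factors; this contradicts $u_K<0$, so $u^{m+1}\ge 0$ in $\Omega$. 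Note that this half of the argument uses neither \eqref{eq:esquema_DG_upw_KS_3} nor the sign of $v^{m+1}$, imposes no restriction on $\Delta t$ (which is where the unconditional character comes from), and does not even invoke Hypothesis~\ref{hyp:mesh_n}; only Hypothesis~\ref{hyp:mesh_acute} is used, and only for $v^{m+1}$.

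\emph{Main obstacle.} The genuine difficulty is not the discrete maximum principle above but its implicit premise: that the nonlinear coupled system \eqref{eq:esquema_DG_upw_KS_2}--\eqref{eq:esquema_DG_upw_KS_3} actually possesses a solution $(u^{m+1},\mu_\varepsilon^{m+1})$, and that $\log(u^{m+1}+\varepsilon)$ makes sense before positivity is known. I would deal with this by a separate existence argument---a Brouwer or Leray--Schauder fixed-point argument, applied if necessary to a modified scheme in which $\log(\cdot+\varepsilon)$ is replaced by a globally Lipschitz function coinciding with it on $[0,\infty)$---and then the bound $u^{m+1}\ge 0$ established above shows that this modification is never active, so the fixed point solves \eqref{esquema_DG_upw_KS}. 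If instead the statement is understood as ``every solution of \eqref{esquema_DG_upw_KS} is non-negative'', then the two steps above already constitute a complete proof.
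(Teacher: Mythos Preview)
Your argument is correct and follows exactly the route the paper takes: the paper's proof consists of two citations, one to the classical discrete maximum principle on acute $\Pc_1$ meshes (your $M$-matrix argument for $v^{m+1}$) and one to \cite{acosta-soba_upwind_2022} for the upwind DG positivity (your cell-wise contradiction argument for $u^{m+1}$, noting that it is independent of the flux $\nabla\mu_\varepsilon^{m+1}$). Your closing remark is also on target: the paper treats existence separately, via a Leray--Schauder fixed-point argument in Proposition~\ref{prop:existencia_solucion_KS_DG-UPW_u}, precisely along the lines you sketch.
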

\begin{proof}
	Proving that if $u^m\ge 0$ and $v^m\ge 0$ (when $\tau>0$) then $v^{m+1}\ge 0$ using that the mesh is acute is a classic result which can be found, for example, in \cite{ciarlet_maximum_1973, fernandez-romero_theoretical_2021}.
	
	Moreover, if $u^m\ge 0$ and $v^m\ge 0$ (when $\tau>0$) it follows from the equation \eqref{eq:esquema_DG_upw_KS_truncated_u} that $u^{m+1}\ge 0$ using the same arguments that are shown to prove the positivity result in \cite{acosta-soba_upwind_2022} since the proof given in \cite{acosta-soba_upwind_2022} is independent of the flux $\nabla\mu^{m+1}$.
\end{proof}

\begin{proposition}
	\label{prop:existencia_solucion_KS_DG-UPW_v}
	There is a unique solution $v^{m+1}$ of the linear equation \eqref{eq:esquema_DG_upw_KS_truncated_v}.
\end{proposition}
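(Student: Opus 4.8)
The plan is to observe that \eqref{eq:esquema_DG_upw_KS_1} is, after moving the known quantities $v^{m}$ and $u^{m}$ to the right-hand side, a square linear system on the finite-dimensional space $\Pc_1(\T_h)$: it reads $a(v^{m+1},\bv)=\ell(\bv)$ for all $\bv\in\Pc_1(\T_h)$, with
\[
a(v,\bv):=\frac{\tau}{\Delta t}\escalarML{v}{\bv}+k_2\escalarLd{\nabla v}{\nabla\bv}+k_3\escalarML{v}{\bv},
\qquad
\ell(\bv):=\frac{\tau}{\Delta t}\escalarML{v^{m}}{\bv}+k_4\escalarL{u^{m}}{\bv}.
\]
Since the system is square, existence and uniqueness are equivalent, and both follow once $a(\cdot,\cdot)$ is shown to be coercive on $\Pc_1(\T_h)$; one may then invoke the Lax--Milgram lemma, or simply note that a coercive square linear system has trivial kernel and hence is invertible.

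First I would recall that, on simplicial shape-regular meshes, the mass-lumped product $\escalarML{\cdot}{\cdot}$ is symmetric positive definite on $\Pc_1(\T_h)$ and induces a norm $\norma{\cdot}_h$ equivalent to $\normaL{\cdot}$, with equivalence constants depending only on the shape-regularity of $\T_h$. Testing with $\bv=v$ then yields
\[
a(v,v)=\frac{\tau}{\Delta t}\norma{v}_h^2+k_2\normaLd{\nabla v}^2+k_3\norma{v}_h^2\ \ge\ k_2\normaLd{\nabla v}^2+k_3\norma{v}_h^2,
\]
and since $k_2,k_3>0$ while $\tau/\Delta t\ge 0$, the equivalence of norms gives $a(v,v)\ge c\,\norma{v}_{H^1(\Omega)}^2$ for some $c>0$ independent of $v$: here the zeroth-order term with coefficient $k_3>0$ is what promotes the gradient seminorm to a full $H^1$-norm. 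Continuity of $a(\cdot,\cdot)$ and of $\ell(\cdot)$ on the finite-dimensional space $\Pc_1(\T_h)$ is immediate (using $u^m\in\Pd_0(\T_h)\subset L^2(\Omega)$). Hence \eqref{eq:esquema_DG_upw_KS_1} admits a unique solution $v^{m+1}\in\Pc_1(\T_h)$.

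This argument covers simultaneously the parabolic-parabolic case $\tau=1$ (where the term $\tfrac{1}{\Delta t}\norma{v}_h^2$ only reinforces coercivity) and the parabolic-elliptic case $\tau=0$. There is no real obstacle here: the single point deserving a word is that coercivity survives when $\tau=0$, which holds precisely because $k_3>0$; without a degradation term one would instead have to work in the quotient space modulo constants and track the mass of the right-hand side.
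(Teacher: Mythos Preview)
Your proof is correct and follows essentially the same approach as the paper: both exploit that on the finite-dimensional space $\Pc_1(\T_h)$ the problem is a square linear system, so existence and uniqueness are equivalent, and both reduce to the positive definiteness of the bilinear form $a(\cdot,\cdot)$. The paper phrases this as ``subtract two solutions and test with the difference,'' leaving the coercivity implicit, whereas you spell out the coercivity estimate explicitly and note that $k_3>0$ is what makes it work uniformly in $\tau\in\{0,1\}$; this is a welcome clarification rather than a different argument.
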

\begin{proof}
	Since we are dealing with a discrete linear problem, existence and unicity of the solution are equivalent. Hence, we just need to assume that there are two solutions of \eqref{eq:esquema_DG_upw_KS_non_truncated_1}, $v_1$ and $v_2$, substract the expressions resulting of evaluating both solutions and test with $v_1-v_2$ to prove unicity of the solution.
\end{proof}

\begin{proposition}
	\label{prop:existencia_solucion_KS_DG-UPW_u}
	There is at least one solution of \eqref{eq:esquema_DG_upw_KS_truncated_u}--\eqref{eq:esquema_DG_upw_KS_truncated_mu}.
\end{proposition}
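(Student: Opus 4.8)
The plan is to set up a fixed-point/topological-degree argument on the finite-dimensional space $\Pd_0(\T_h)\times\Pd_0(\T_h)$, since the system \eqref{eq:esquema_DG_upw_KS_2}--\eqref{eq:esquema_DG_upw_KS_3} is nonlinear (through $\log(u+\varepsilon)$ and through the $(\cdot)_\oplus$, $(\cdot)_\ominus$ truncations in $\aupw{\cdot}{\cdot}{\cdot}$) and cannot be solved by a purely linear-algebra argument. First I would eliminate $\mu_\varepsilon^{m+1}$ using \eqref{eq:esquema_DG_upw_KS_3}: since $v^{m+1}$ is already known from Step 1 and $\Pi_0$ on $\Pd_0(\T_h)$ is the identity, \eqref{eq:esquema_DG_upw_KS_3} gives $\mu_\varepsilon^{m+1}$ elementwise as an explicit (continuous, strictly increasing in each component of $u^{m+1}$) function of $u^{m+1}$, namely $\mu_{\varepsilon,K}^{m+1}=k_0\log(u_K^{m+1}+\varepsilon)-k_1 v_K^{m+1}$ after mass-lumping the mass matrix if needed, or as the solution of a well-posed linear system otherwise. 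Substituting into \eqref{eq:esquema_DG_upw_KS_2} reduces the problem to finding $u^{m+1}\in\Pd_0(\T_h)$ solving a single nonlinear equation $\mathcal{F}(u^{m+1})=0$ with $\mathcal{F}\colon\R^{\#\T_h}\to\R^{\#\T_h}$ continuous.

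Next I would regularize and truncate to obtain an a priori bound and invoke a Brouwer-type argument. The key observations are: (i) by Proposition \ref{prop:discrete_mass_conservation} (equivalently, testing \eqref{eq:esquema_DG_upw_KS_2} with $\bu=1$) any solution satisfies $\int_\Omega u^{m+1}=\int_\Omega u^m\eqqcolon M$, so the relevant set is the affine hyperplane of mean $M$; (ii) by Theorem \ref{thm:principio_del_maximo_DG_KS} any solution automatically satisfies $u^{m+1}\ge 0$, so $0\le u_K^{m+1}\le M/|K|$ for every $K$, i.e. solutions live in a fixed compact convex set $\mathcal{C}$. To make this rigorous one typically argues as follows: define a map $T$ on $\mathcal{C}$ by solving, for given $w\in\mathcal{C}$, the linearized problem where $(u^{m+1})_\oplus$ in \eqref{eq:esquema_DG_upw_KS_2} is replaced by $(w)_\oplus$ and $\log(u^{m+1}+\varepsilon)$ by $\log(w+\varepsilon)$ — this makes \eqref{eq:esquema_DG_upw_KS_2}--\eqref{eq:esquema_DG_upw_KS_3} linear in the new iterate, with a matrix that is (strictly) positive definite on the mean-zero subspace because $\aupw{\mu}{\cdot}{\cdot}$ restricted to fixed positive weights is positive semidefinite (the volume term vanishes on $\Pd_0$ and the edge terms form a weighted graph Laplacian), so the iterate exists and is unique; then show $T(\mathcal{C})\subset\mathcal{C}$ using the discrete maximum principle of Theorem \ref{thm:principio_del_maximo_DG_KS} applied to the linearized operator, and that $T$ is continuous. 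Brouwer's fixed-point theorem then yields $u^{m+1}=T(u^{m+1})$, which solves the original coupled system, and it is nonnegative by construction.

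The step I expect to be the main obstacle is verifying the self-map property $T(\mathcal{C})\subset\mathcal{C}$ together with continuity of $T$ across the kink of the positive/negative-part functions. Continuity of $T$ is delicate precisely because $\aupw{\cdot}{(\cdot)_\oplus}{\cdot}$ involves $(\cdot)_\oplus$ and $(-\nabland\mu)_\oplus$, $(-\nabland\mu)_\ominus$, which are only Lipschitz, not $C^1$; one must check that the coefficient matrix of the linearized problem depends continuously on $w$ (it does, since the matrix entries are compositions of continuous functions) and that it stays uniformly invertible on the mean-zero subspace as $w$ ranges over $\mathcal{C}$ — this uniform invertibility is where the positive-semidefiniteness of the upwind form, combined with the strictly positive diagonal contribution coming from $k_0\log(w+\varepsilon)$ having bounded derivative $k_0/(w_K+\varepsilon)$ and the connectedness of the mesh graph, must be used carefully. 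An alternative that sidesteps continuity issues is to use the Leray–Schauder degree or the variant of Brouwer's theorem for set-valued maps / the corollary (sometimes attributed to the "no-retraction" argument) stating that a continuous $\mathcal{F}\colon \overline{B_R}\to\R^n$ with $\mathcal{F}(x)\cdot x\ge 0$ on $\partial B_R$ has a zero in $\overline{B_R}$: here one would test \eqref{eq:esquema_DG_upw_KS_2} (after eliminating $\mu_\varepsilon^{m+1}$) with $\bu=u^{m+1}-\Pi_0(M/|\Omega|)$ or with $\bu=\mu_\varepsilon^{m+1}$, use the energy-type cancellation that underlies \eqref{ley_energia_continua_KS} to get a coercive estimate, and conclude directly — this is probably the cleanest route and is likely the one the authors take, but it still requires the careful sign bookkeeping in the upwind form that constitutes the technical heart of the argument.
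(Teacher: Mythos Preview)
Your primary Brouwer route has a genuine gap. With both $(u^{m+1})_\oplus$ and $\log(u^{m+1}+\varepsilon)$ frozen at $w$, equation \eqref{eq:esquema_DG_upw_KS_3} determines $\mu_\varepsilon$ completely from $w$, and then \eqref{eq:esquema_DG_upw_KS_2} becomes an \emph{explicit} update for the new iterate: $u=T(w)=u^m-\Delta t\,R(w)$ for some functional $R$. There is no linear system to invert (your discussion of positive definiteness on the mean-zero subspace does not apply here), and crucially the self-map property $T(\mathcal{C})\subset\mathcal{C}$ fails: positivity of an explicit upwind step is a CFL-type statement, not an unconditional one. The discrete maximum principle in Theorem \ref{thm:principio_del_maximo_DG_KS} relies on the \emph{implicit} presence of $(u^{m+1})_\oplus$ in the upwind term, so it cannot be transported to your linearized map.

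Your alternative---Leray--Schauder---is exactly what the paper does, and the reason it succeeds is precisely the point above. The paper defines $T(\widehat u,\widehat{\mu_\varepsilon})=(u,\mu_\varepsilon)$ by the same explicit linearization you wrote, but instead of a self-map condition it only needs boundedness of $B=\{(u,\mu_\varepsilon):(u,\mu_\varepsilon)=\alpha\,T(u,\mu_\varepsilon)\}$. For elements of $B$ the equation reads
\[
\frac{1}{\Delta t}\escalarL{u-\alpha u^m}{\bu}=-\alpha\,\aupw{\mu_\varepsilon}{u_\oplus}{\bu},
\]
which is again \emph{implicit} in $u$, so the positivity argument of Theorem \ref{thm:principio_del_maximo_DG_KS} applies. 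The bound then comes not from an energy estimate but from something much simpler: test with $\bu=1$ to get $\int_\Omega u=\alpha\int_\Omega u^m$, combine with $u\ge0$ to get $\|u\|_{L^1(\Omega)}\le\|u^m\|_{L^1(\Omega)}$, and use equivalence of norms in the finite-dimensional space. The bound on $\mu_\varepsilon$ follows immediately from the explicit formula $\mu_\varepsilon=\alpha k_0\log(u+\varepsilon)-\alpha k_1\Pi_0 v^{m+1}$.
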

\begin{proof}
	Consider the following well-known theorem:
	\begin{theorem}[Leray-Schauder fixed point theorem]
		\label{thm:Leray-Schauder}
		Let $\X$ be a Banach space and let $T\colon\X\longrightarrow\X$ be a continuous and compact operator. If the set $$\{x\in\X\colon x=\alpha \,T(x)\quad\text{for some } 0\le\alpha\le1\}$$ is bounded (uniformly with respect to $\alpha$), then $T$ has  at least one fixed point.
	\end{theorem}
	
	
	Given $u^m\in\Pd_0(\T_h)$ with $u^m\ge 0$ and the unique solution $v^{m+1}$ of \eqref{eq:esquema_DG_upw_KS_truncated_v}, we define the map
	$$T\colon \Pd_0(\T_h)\times\Pd_0(\T_h)\longrightarrow\Pd_0(\T_h)\times \Pd_0(\T_h)$$ such that $T(\widehat{u},\widehat{\mu})=(u,\mu)\in\Pd_0(\T_h)\times\Pd_0(\T_h)$ is the unique solution of the linear (and decoupled) problem:
	\begin{subequations}
		\label{esquema_lineal_Leray-Schauder_DG_upw_KS}
		\begin{align}
		\label{eq:esquema_lineal_Leray-Schauder_DG_upw_KS_1}
		\frac{1}{\Delta t}\escalarL{u-u^m}{\overline{u}}&
		=-\aupw{\widehat{\mu}}{\widehat{u}_\oplus}{\overline{u}},&\forall\overline{u}\in \Pd_0(\T_h),\\
		\label{eq:esquema_lineal_Leray-Schauder_DG_upw_KS_2}
		\escalarL{\mu}{\bmu}&=k_0 \escalarL{\log((\widehat u)_\oplus+\varepsilon)}{\bmu}- k_1\escalarL{v^{m+1}}{\bmu}, &\forall\bmu\in \Pd_0(\T_h).
		\end{align}
	\end{subequations}
	
	To check that $T$ is well defined, it is straightforward to see that the solutions $u$ of \eqref{eq:esquema_lineal_Leray-Schauder_DG_upw_KS_1} and $\mu$ of \eqref{eq:esquema_lineal_Leray-Schauder_DG_upw_KS_2} are unique, which involves their existence as $\Pd_0(\T_h)$ is a finite-dimensional space.
	
	Secondly, we will check that $T$ is continuous. Let $\{\widehat{u}_j\}_{j\in\N}, \{\widehat{\mu}_j\}_{j\in\N}\subset\Pd_0(\T_h)$ be sequences such that $\lim_{j\to\infty}\widehat{u}_j=\widehat{u}$ and $\lim_{j\to\infty}\widehat{\mu}_j=\widehat{\mu}$. Taking into account that all norms are equivalent in $\Pd_0(\T_h)$ since it is a finite-dimensional space, the convergences $\widehat u_j\to \widehat u$ and $\widehat {\mu}_j\to \widehat {\mu}$ are equivalent to the elementwise convergences $(\widehat u_j)_K\to \widehat u_K$ and $(\widehat {\mu}_j)_K\to \widehat {\mu}_K$ for every $K\in\T_h$ (this may be seen, for instance, by using the norm $\norma{\cdot}_{L^\infty(\Omega)}$). Taking limits when $j\to \infty$ in \eqref{esquema_lineal_Leray-Schauder_DG_upw_KS} (with $\widehat{u}\defeq\widehat{u}_j$, $\widehat{\mu}\defeq\widehat{\mu}_j$ and $(u,\mu)\defeq T(\widehat u_j,\widehat{\mu}_j)$), using the notion of elementwise convergence and the fact that $\log((\widehat u)_\oplus+\varepsilon)$ is continuous, we get that $$\lim_{j\to \infty} T(\widehat u_j,\widehat{\mu}_j)=T(\widehat u,\widehat{\mu})=T\left(\lim_{j\to \infty}(\widehat u_j,\widehat{\mu}_j)\right),$$ hence $T$ is continuous. In addition, $T$ is compact since $\Pd_0(\T_h)$ have finite dimension.
	
	Finally, let us prove that the set $$B=\{(u,\mu)\in\Pd_0(\T_h)\times\Pd_0(\T_h)\colon (u,\mu)=\alpha T(u,\mu)\text{ for some } 0\le\alpha\le1\}$$ is bounded (independent of $\alpha$). The case $\alpha=0$ is trivial so we will assume that $\alpha\in(0,1]$.
	
	If $(u,\mu)\in B$, then $u\in\Pd_0(\T_h)$ is the solution of
	\begin{align}
	\label{eq:esquema_DG_upw_KS_Leray-Schauder_u}
	\frac{1}{\Delta t}\escalarL{u-\alpha u^m}{\overline{u}}
	=-\alpha\,\aupw{\mu}{u_\oplus}{\overline{u}},
	&&\forall\overline{u}\in \Pd_0(\T_h).
	\end{align}
	Now, testing \eqref{eq:esquema_DG_upw_KS_Leray-Schauder_u} with $\overline u=1$, we get that $$\int_\Omega u=\alpha \int_\Omega u^m ,$$ and, as $u^m\ge 0$ and it can be proved that $u\ge 0$ using the same arguments than in Theorem~\ref{thm:principio_del_maximo_DG_KS}, we get that $$\norma{u}_{L^1(\Omega)}\le \norma{u^m}_{L^1(\Omega)}.$$
	
	Moreover, since $u\ge0$, $\mu\in\Pd_0(\T_h)$ is the solution of the equation
	\begin{align}
	\label{eq:esquema_DG_upw_KS_Leray-Schauder_mu}
	\escalarL{\mu}{\bmu}=\alpha k_0 \escalarL{\log( u+\varepsilon)}{\bmu}- \alpha k_1\escalarL{v^{m+1}}{\bmu}, &&\forall\bmu\in \Pd_0(\T_h).
	\end{align}
	Hence,
	\begin{align*}
	\mu=\alpha k_0 \log( u+\varepsilon)- \alpha k_1\Pi_0 v^{m+1}, &&\text{in } \Pd_0(\T_h).
	\end{align*}
	Thus, taking into account that $u$ is bounded in $\Pd_0(\T_h)$, we conclude that $\mu$ is bounded in $\Pd_0(\T_h)$.
	
	Since $\Pd_0(\T_h)$ is a finite-dimensional space where all the norms are equivalent, we have proved that $B$ is bounded.
	
	Finally, we can apply the Leray-Schauder fixed point theorem \ref{thm:Leray-Schauder} to prove the existence of a fixed point of \eqref{eq:esquema_lineal_Leray-Schauder_DG_upw_KS_1}--\eqref{eq:esquema_lineal_Leray-Schauder_DG_upw_KS_2} and, consequently, the existence of a solution $(u^{m+1},\mu^{m+1})$ of
	\eqref{eq:esquema_DG_upw_KS_truncated_u}--\eqref{eq:esquema_DG_upw_KS_truncated_mu}.
\end{proof}

Since every solution of \eqref{esquema_DG_upw_KS_truncated} is positive according to Theorem~\ref{thm:principio_del_maximo_DG_KS}, the schemes \eqref{esquema_DG_upw_KS_truncated} and \eqref{esquema_DG_upw_KS_non_truncated} are equivalent in the sense that any solution of \eqref{esquema_DG_upw_KS_truncated} is solution of \eqref{esquema_DG_upw_KS_non_truncated} and vice versa. Therefore, using Propositions~\ref{prop:existencia_solucion_KS_DG-UPW_v} and~\ref{prop:existencia_solucion_KS_DG-UPW_u} the following result holds.
\begin{corollary}
	\label{cor:esquema_DG_upw_KS_no_parte_pos}
	There is at least one solution of the decoupled non-truncated scheme \eqref{esquema_DG_upw_KS_non_truncated}. Moreover, $v^{m+1}$ is unique.
\end{corollary}

\begin{remark}
	Obtaining the nonnegative solutions of \eqref{esquema_DG_upw_KS_non_truncated} can be enforced by solving the scheme \eqref{esquema_DG_upw_KS_truncated} including the cut-off operator \eqref{eq:esquema_DG_upw_KS_truncated_u}. In practice, the same solution was found in our numerical experiments using either the auxiliary truncated scheme \eqref{esquema_DG_upw_KS_truncated} or the non-truncated scheme \eqref{esquema_DG_upw_KS_non_truncated} without explicitly imposing the nonnegativity restriction $u^{m+1}\ge 0$ (see Remark~\ref{rmk:cut-off_operator}).
\end{remark}

\begin{remark}
	Showing uniqueness of solution of \eqref{esquema_DG_upw_KS_non_truncated} is not straightforward and it might require using inverse inequalities that would probably involve some kind of restriction on the time step and mesh size, and this is beyond the scope of this work.
\end{remark}

\begin{theorem}
	\label{thm:energia_esquema_KS}
	Any solution
	of the scheme \eqref{esquema_DG_upw_KS_non_truncated} satisfies the following \textbf{discrete energy law} at the time step $m+1$:
	\begin{multline}
	\label{eq:energy_law}
	\delta_t E_\varepsilon(u^{m+1}, v^{m+1})
	+ \Delta t \frac{k_1 k_3}{2k_4}\int_\Omega (\delta_t v^{m+1})^2+ \Delta t \frac{k_1k_2}{2k_4}\int_\Omega\vert\delta_t \nabla v^{m+1}\vert^2
	\\
	+ \tau \frac{k_1}{k_4} \int_\Omega (\delta_t v^{m+1})^2
	+\aupw{\mu^{m+1}}{u^{m+1}}{\mu^{m+1}}\le
	0,
	\end{multline}
	where
	\begin{align}
	\label{energia_eps_KS}
	&E_\varepsilon(u,v)\defeq\\\nonumber&\int_\Omega \big(k_0(u+\varepsilon)\log(u+\varepsilon)-k_1uv+\frac{k_1k_2}{2k_4}\vert\nabla v\vert ^2+\frac{k_1k_3}{2k_4}v^2\big).
	\end{align}
\end{theorem}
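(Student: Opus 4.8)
The plan is to reproduce, at the fully discrete level, the formal computation that produced the continuous energy law \eqref{ley_energia_continua_KS}. There one used the multipliers $\bu=\mu$, $\bmu=\partial_t u$, $\bv=(k_1/k_4)\partial_t v$; the discrete analogue is to test \eqref{eq:esquema_DG_upw_KS_2} with $\bu=\mu_\varepsilon^{m+1}$, \eqref{eq:esquema_DG_upw_KS_3} with $\bmu=\delta_t u^{m+1}$ and \eqref{eq:esquema_DG_upw_KS_1} with $\bv=(k_1/k_4)\,\delta_t v^{m+1}$ — all admissible, since $\mu_\varepsilon^{m+1},\delta_t u^{m+1}\in\Pd_0(\T_h)$ and $\delta_t v^{m+1}\in\Pc_1(\T_h)$ — and then add the three resulting identities. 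The coupling term $\escalarL{\mu_\varepsilon^{m+1}}{\delta_t u^{m+1}}$ cancels between the first two, leaving
\begin{equation*}
k_0\escalarL{\log(u^{m+1}+\varepsilon)}{\delta_t u^{m+1}}-k_1\escalarL{v^{m+1}}{\delta_t u^{m+1}}+\aupw{\mu_\varepsilon^{m+1}}{(u^{m+1})_\oplus}{\mu_\varepsilon^{m+1}}=0,
\end{equation*}
which must then be combined with the identity produced by \eqref{eq:esquema_DG_upw_KS_1}.

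Three reductions remain. (i) \emph{Entropy term.} Set $F(s)\coloneqq(s+\varepsilon)\log(s+\varepsilon)$, which is convex on $(-\varepsilon,\infty)$ and hence along $u^{m},u^{m+1}\ge 0$ (Theorem \ref{thm:principio_del_maximo_DG_KS}); convexity gives $F(u^{m+1})-F(u^{m})\le F'(u^{m+1})(u^{m+1}-u^{m})$ with $F'(s)=\log(s+\varepsilon)+1$, so, dividing by $\Delta t$, integrating over $\Omega$, and using $\int_\Omega\delta_t u^{m+1}=0$ (Proposition \ref{prop:discrete_mass_conservation}) to absorb the constant $1$,
\begin{equation*}
\delta_t\!\int_\Omega k_0(u^{m+1}+\varepsilon)\log(u^{m+1}+\varepsilon)\le k_0\escalarL{\log(u^{m+1}+\varepsilon)}{\delta_t u^{m+1}}.
\end{equation*}
This is the only inequality in the whole argument. (ii) \emph{Coupling term.} From $\delta_t(u^{m+1}v^{m+1})=v^{m+1}\,\delta_t u^{m+1}+u^{m}\,\delta_t v^{m+1}$ one gets $-k_1\escalarL{v^{m+1}}{\delta_t u^{m+1}}=-k_1\,\delta_t\!\int_\Omega u^{m+1}v^{m+1}+k_1\escalarL{u^{m}}{\delta_t v^{m+1}}$, and the last term is replaced by the remaining contributions of \eqref{eq:esquema_DG_upw_KS_1} tested with $(k_1/k_4)\,\delta_t v^{m+1}$, namely $\tau\frac{k_1}{k_4}\escalarML{\delta_t v^{m+1}}{\delta_t v^{m+1}}+\frac{k_1k_2}{k_4}\escalarLd{\nabla v^{m+1}}{\nabla\delta_t v^{m+1}}+\frac{k_1k_3}{k_4}\escalarML{v^{m+1}}{\delta_t v^{m+1}}$. (iii) \emph{Time-derivative splitting.} Apply $a(a-b)=\tfrac12(a^2-b^2)+\tfrac12(a-b)^2$ to the last two terms, i.e.
\begin{equation*}
\escalarLd{\nabla v^{m+1}}{\nabla\delta_t v^{m+1}}=\tfrac12\,\delta_t\normaLd{\nabla v^{m+1}}^2+\tfrac{\Delta t}{2}\normaLd{\nabla\delta_t v^{m+1}}^2,
\end{equation*}
and likewise for $\escalarML{v^{m+1}}{\delta_t v^{m+1}}$ with the lumped product.

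Collecting terms, the $\delta_t(\cdot)$ contributions assemble exactly into $\delta_t E_\varepsilon(u^{m+1},v^{m+1})$ with $E_\varepsilon$ as in \eqref{energia_eps_KS} (the $(v^{m+1})^2$ piece being understood with $\escalarML{\cdot}{\cdot}$, consistently with the mass lumping in \eqref{eq:esquema_DG_upw_KS_1}), while the quadratic remainders $\tfrac{\Delta t}{2}(\cdot)$, the $\tau$-term and $\aupw{\mu_\varepsilon^{m+1}}{(u^{m+1})_\oplus}{\mu_\varepsilon^{m+1}}$ are precisely the dissipation terms displayed in \eqref{eq:energy_law}; together with the single inequality from (i) this yields \eqref{eq:energy_law} verbatim.

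The bulk of the proof is routine bookkeeping. The one genuinely substantive step is (i): it is crucial that the scheme is backward Euler in time, so that $F'$ is evaluated at the new level $u^{m+1}$, and that discrete mass conservation is available to cancel the spurious term $\int_\Omega\delta_t u^{m+1}$ coming from the constant in $F'(s)=\log(s+\varepsilon)+1$ — this is exactly why the log-entropy variable $\mu_\varepsilon$ and its implicit treatment were built into the scheme. The only bookkeeping subtlety is keeping the mass-lumped and exact inner products straight in the $v$-terms, as flagged above.
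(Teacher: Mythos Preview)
Your proof is correct and follows essentially the same route as the paper: the same three test functions, the same discrete product-rule identity for $\delta_t(u^{m+1}v^{m+1})$, the same polarization $a(a-b)=\tfrac12(a^2-b^2)+\tfrac12(a-b)^2$ for the $v$-terms, and the same convexity argument for the log-entropy combined with mass conservation (Proposition \ref{prop:discrete_mass_conservation}) to dispose of the affine part of $F'$. Your explicit remark that the $\tau$-term and the $(v^{m+1})^2$ contribution to $E_\varepsilon$ inherit the mass-lumped inner product from \eqref{eq:esquema_DG_upw_KS_1} is a point the paper leaves implicit; it is worth keeping.
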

\begin{proof}
	
	Take $\bu=\mu^{m+1}$, $\bmu=\delta_t u^{m+1}$, $\bv=(k_1/k_4)\delta_t v^{m+1}$ in \eqref{esquema_DG_upw_KS_non_truncated} and consider the equalities
	\begin{align*}
	\delta_t(u^{m+1}v^{m+1})&=u^m\delta_t(v^{m+1})+\delta_t(u^{m+1}) v^{m+1},\\
	\delta_t(v^{m+1})v^{m+1}&=\frac{1}{2}\delta_t(v^{m+1})^2+\frac{\Delta t}{2}(\delta_t v^{m+1})^2.
	\end{align*}
	Then, adding the resulting expressions for \eqref{eq:esquema_DG_upw_KS_non_truncated_1} and \eqref{eq:esquema_DG_upw_KS_non_truncated_3} and substracting \eqref{eq:esquema_DG_upw_KS_non_truncated_2}, we obtain
	\begin{align}
	0&=\aupw{\mu^{m+1}}{u^{m+1}}{\mu^{m+1}}+k_0\int_\Omega\delta_t (u^{m+1})\log(u^{m+1}+\varepsilon)\nonumber\\ &\quad -k_1\int_\Omega \delta_t (u^{m+1})v^{m+1}
	-k_1\int_\Omega u^m\delta_t(v^{m+1})+\frac{k_1}{k_4}\int_\Omega (\delta_t v^{m+1})^2\nonumber\\&\quad+\frac{k_1k_2}{k_4}\int_\Omega\nabla v^{m+1}\cdot\nabla(\delta_t v^{m+1})+\frac{k_1k_3}{k_4}\int_\Omega v^{m+1}\delta_t (v ^{m+1})\nonumber\\
	&=\aupw{\mu^{m+1}}{u^{m+1}}{\mu^{m+1}}+k_0\int_\Omega\delta_t (u^{m+1})\log(u^{m+1}+\varepsilon)\nonumber\\
	&\quad-k_1\delta_t\int_\Omega u^{m+1}v^{m+1}+\frac{k_1k_2}{2k_4}\delta_t\int_\Omega\vert\nabla v^{m+1}\vert^2+\frac{\Delta tk_1k_2}{2k_4}\int_\Omega\vert\delta_t \nabla v^{m+1}\vert^2\nonumber\\
	\label{thm:discrete_energy_1}
	&\quad+\frac{k_1k_3}{2k_4}\delta_t\int_\Omega (v^{m+1})^2+\left(\frac{k_1}{k_4}+\frac{\Delta tk_1 k_3}{2k_4}\right)\int_\Omega (\delta_t v^{m+1})^2.
	\end{align}
	
	Now, using that $\delta_t (u^{m+1}) F'(u^{m+1}) \ge \delta_t (F(u^{m+1}))$ for $F'(u^{m+1})=\log(u^{m+1}+\varepsilon)$ (owing to the fact that $F(u)$ is convex) we have that
	\begin{align*}
	\delta_t(u^{m+1})\log(u^{m+1}+\varepsilon)\ge\delta_t\left((u^{m+1}+\varepsilon)\log(u^{m+1}+\varepsilon)\right)-\delta_t(u^{m+1}+\varepsilon).
	\end{align*}
	Hence, using Proposition~\ref{prop:discrete_mass_conservation},
	\begin{equation}
	\label{thm:discrete_energy_2}
	\int_\Omega\delta_t(u^{m+1})\log(u^{m+1}+\varepsilon)\ge
	\delta_t\left(\int_\Omega\left((u^{m+1}+\varepsilon)\log(u^{m+1}+\varepsilon)\right) \right).
	\end{equation}
	
	Thus, taking into account Theorems~\ref{thm:discrete_energy_1} and~\ref{thm:discrete_energy_2}, we obtain the discrete energy law \eqref{eq:energy_law}.
\end{proof}

\begin{corollary}
	Given a solution of the scheme \eqref{esquema_DG_upw_KS_non_truncated}, the upwind bilinear form
  defined in \eqref{def:aupw} satisfies
	$$
	\aupw{\mu^{m+1}}{u^{m+1}}{\mu^{m+1}}\ge0.
	$$
	
	In consequence, the scheme \eqref{esquema_DG_upw_KS_non_truncated} is unconditionally energy stable with respect to the approximated energy $E_\varepsilon$, that is $$E_\varepsilon(u^{m+1},v^{m+1})\le E_\varepsilon(u^{m},v^{m}).$$
\end{corollary}
\begin{proof}
	Since we know that the discrete energy satisfies \eqref{eq:energy_law}, it suffices to prove that $\aupw{\mu^{m+1}}{u^{m+1}}{\mu^{m+1}}\ge0$ to show $\delta_t E_\varepsilon(u^{m+1},v^{m+1})\le 0$.
	
	Now, take $\bu=\mu^{m+1}$ and use the definition \eqref{def:aupw_baricenter} of the upwind bilinear form to get the following:
	\begin{align*}
	&\aupw{\mu^{m+1}}{(u^{m+1})_\oplus}{\mu^{m+1}}=
	\\&=\sum_{e\in\E_h^i,e=K\cap L}\frac{1}{\mathcal{D}_e(\T_h)}\int_e\left(\left(\salto{\mu^{m+1}}\right)_\oplus\uK^{m+1}-(\salto{\mu^{m+1}})_\ominus\uL^{m+1}\right)\salto{\mu^{m+1}}\\&=\sum_{e\in\E_h^i,e=K\cap L}\frac{1}{\mathcal{D}_e(\T_h)}\int_e\left(\Big(\salto{\mu^{m+1}}\Big)_\oplus^2\uK^{m+1}+\Big(\salto{\mu^{m+1}}\Big)_\ominus^2\uL^{m+1}\right)
	\ge 0.
	\end{align*}
\end{proof}

\begin{remark}
	Notice that the energy stability of the scheme \eqref{esquema_DG_upw_KS_non_truncated} is obtained thanks to the approximation of the flux $-\nabla\mu$ made in the upwind bilinear form $\aupw{\cdot}{\cdot}{\cdot}$. In addition, the approximation $-\nabland\mu$ on the edges $e\in\E_h^{\text i}$ requires the assumption of the Hypothesis~\ref{hyp:mesh_n} for the mesh $\T_h$ as discussed in the Remark~\ref{rmk:approx_grad_mu}.
\end{remark}

\section{Numerical experiments}
\label{sec:numer-experiments}

In this section we show some numerical tests whose results are according to the results shown above for the scheme \eqref{esquema_DG_upw_KS_non_truncated}. For these tests we consider the parameters $k_i=1$ for $i\in\{0,1,\ldots,4\}$, $\tau=1$, $\varepsilon=10^{-10}$ and the domain $\Omega=[-1/2,1/2]\times[-1/2,1/2]$ unless otherwise specified. Also, the mesh~1 in Figure~\ref{fig:representation_D_bar} is used to discretize the domain.

In the test \ref{test1} we reproduce the first numerical experiment shown in the paper \cite{chertock_second-order_2008} by A. Chertock and A. Kurganov. In this paper, they use a scheme that preserves the positivity of both variables $u$ and $v$, although they do not show any energy related result. Hence, in our case, we can improve the results shown in the aforementioned paper assuring that our scheme preserves the energy law of the continuous Keller-Segel model.

Then, in the test \ref{test2} we simulate the qualitative behaviour of the solution in the numerical experiment made by N. Saito in \cite{saito_conservative_2007}. However, since the initial conditions used for the experiment in \cite{saito_conservative_2007} are not specified we cannot reproduce the exact same test shown in this paper. In the case of our numerical test, the qualitative behaviour of the solution is similar to the one in \cite{saito_conservative_2007} until the mesh is refined enough so that we capture the blow-up phenomenon.

Finally, in the test \ref{test3} we reproduce the qualitative behaviour of the results in \cite{andreianov_finite_2011, chamoun_monotone_2014, tyson_fractional_2000} for different variations of chemotaxis equations and in \cite{fatkullin_study_2013} for the Keller-Segel equations, where pattern formations with multiple peaks are shown.

\begin{remark}
	The scheme \eqref{esquema_DG_upw_KS_non_truncated} preserves the positivity and conserves the mass of $u^{m+1}$ which implies $u^{m+1}\in L^1(\Omega)$. Therefore, we cannot expect an actual blow-up in the discrete case as it occurs in the continuous model. However, we  observe in the numerical tests how the mass accumulates in some elements of $\T_h$ leading to the formation of peaks.
	
	In fact, we are able to capture peaks that reach values up to the order of $10^{7}$ using the approximation shown in \eqref{esquema_DG_upw_KS_non_truncated}. These kinds of numerical results are not usual in the literature due to the difficulties when approximating the steep gradients that this process involved.
\end{remark}

The numerical results shown in this paper have been obtained using the Python library \textit{FEniCS}, \cite{Fenics:AlnaesEtAl:2015}. In order to improve the efficiency of the code, these have been run in parallel using several CPUs.

For the sake of a better visualization of the results, a $\Pc_1$-projection of $u$ is represented in 3D using \textit{Paraview}, \cite{paraview}.

\begin{remark}
	\label{rmk:cut-off_operator}
	All the tests were carried out using both the non-truncated equation \eqref{eq:esquema_DG_upw_KS_non_truncated_2} without the restriction $u^{m+1}\ge 0$ and the truncated version \eqref{eq:esquema_DG_upw_KS_truncated_u} to enforce the nonnegativity. The approximations of $u$ and $v$ obtained in every case using both versions of the scheme were identical in all the degrees of freedom.
\end{remark}

\subsection{One bulge of cells}
\label{test1}
First, we reproduce the results shown in \cite{chertock_second-order_2008}. For this purpose, we consider the radially symmetric initial conditions
\begin{equation*}
u_0 = 1000 e^{-100 (x^2 + y^2)}, \quad
v_0 = 500e^{-50 (x^2 + y^2)},
\end{equation*}
which are plotted in Figure~\ref{fig:ic_chertock}.

\begin{figure}
	\centering
	\begin{subfigure}{0.49\textwidth}
		\centering
		\boldmath{$u_0$}
		
		\includegraphics[scale=0.11]{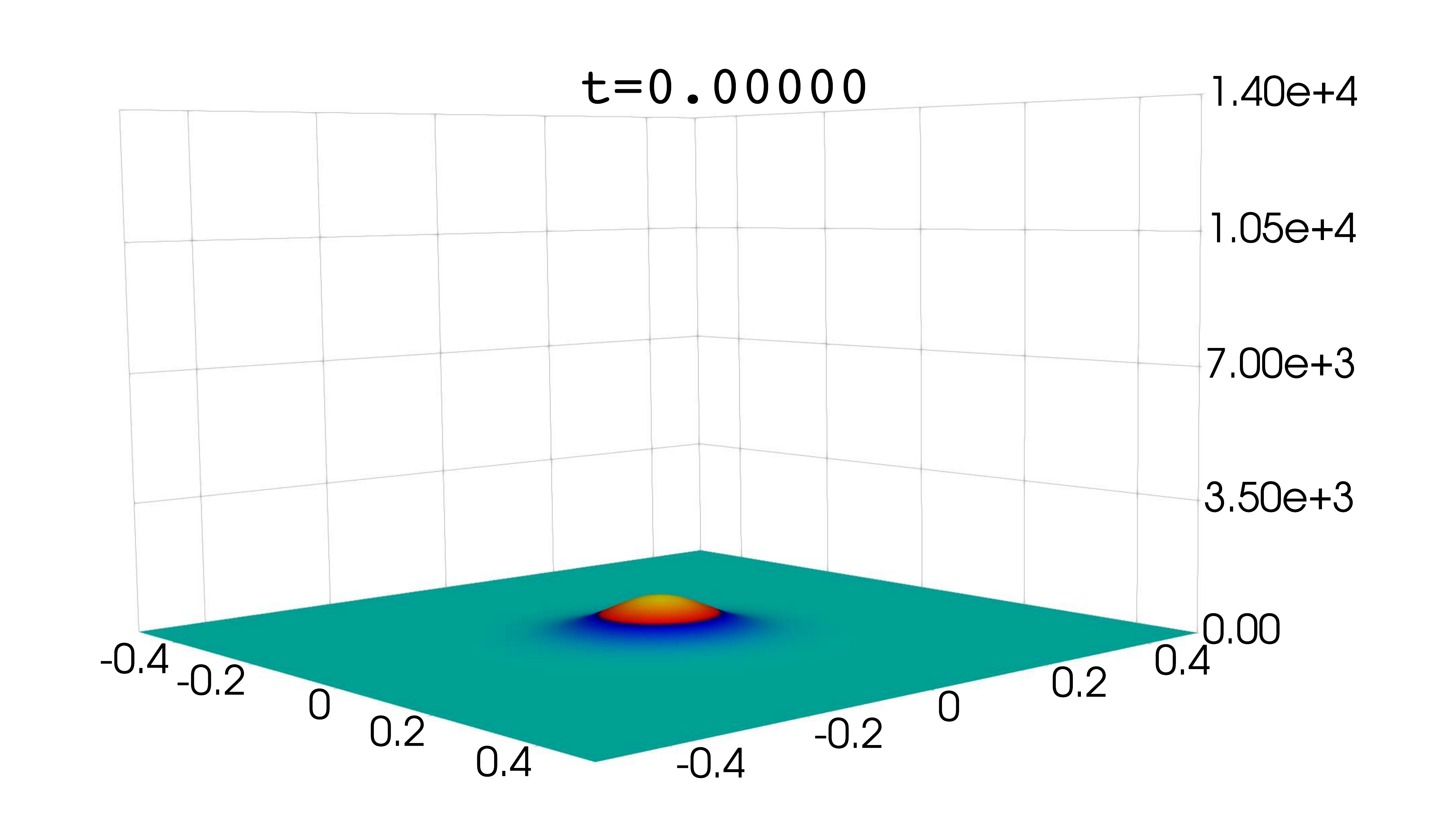}
	\end{subfigure}
	\begin{subfigure}{0.49\textwidth}
		\centering
		\boldmath{$v_0$}
		
		\includegraphics[scale=0.11]{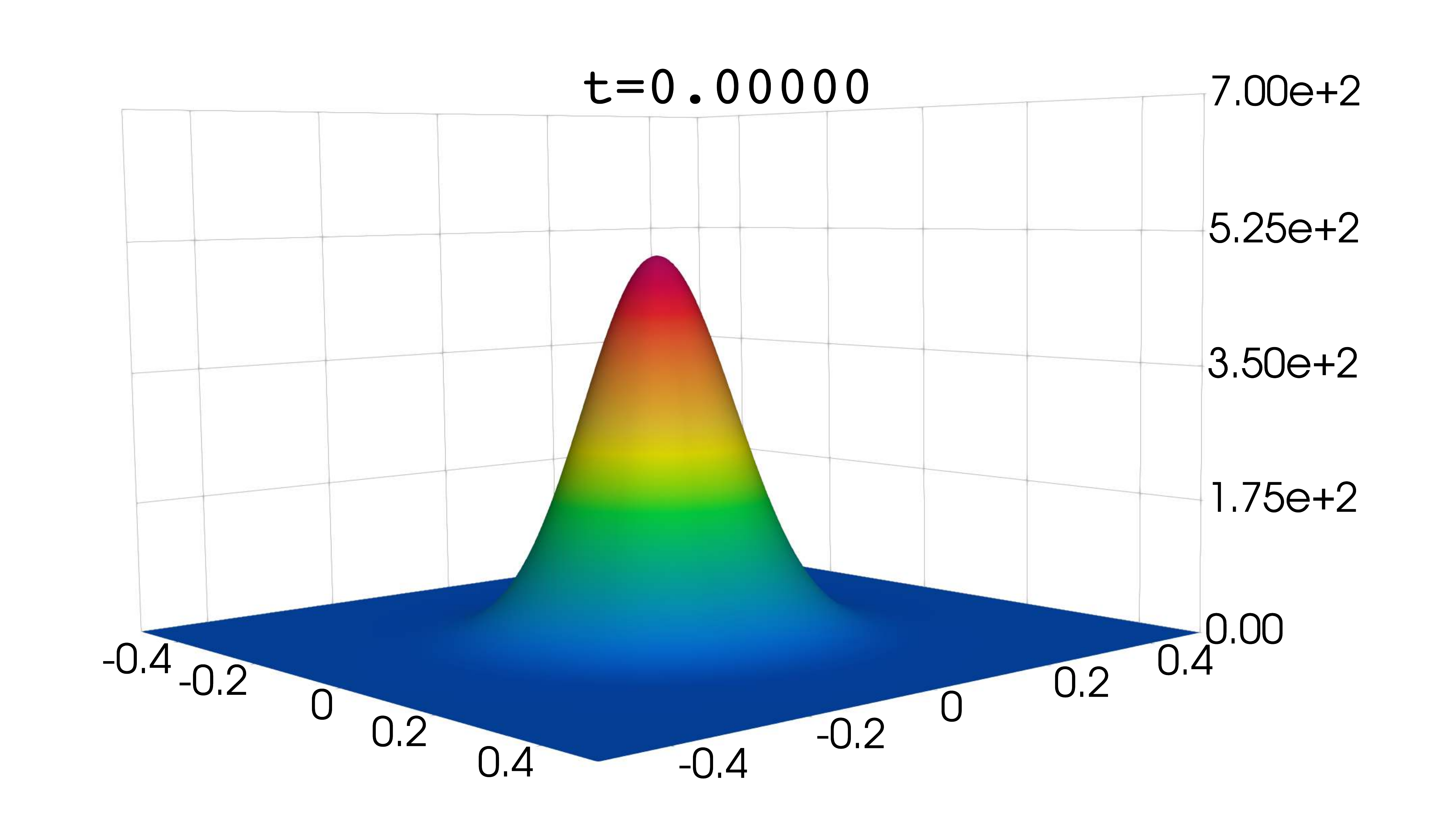}
	\end{subfigure}
	\caption{Initial conditions for blow-up as in \cite{chertock_second-order_2008} (different scales are used for $u$ and $v$).}
	\label{fig:ic_chertock}
\end{figure}

As stated in \cite{chertock_second-order_2008}, the $u$ and $v$ components of the solution are expected to blow up in a finite time due to the initial conditions chosen. The result of the test using the scheme \eqref{esquema_DG_upw_KS_non_truncated} with $h\approx 1.41\cdot 10^{-3}$ and $\Delta t=10^{-6}$ is shown in Figures~\ref{fig:u_chertock} and \ref{fig:v_chertock}. In fact, we observe a blow-up phenomenon for a certain finite time in the range conjectured by A. Chertock and A. Kurganov in \cite{chertock_second-order_2008},  $t^*\in(4.4\cdot 10^{-5},10^{-4})$, as our discrete approximation reaches values of order $10^6$ in this time interval.

\begin{figure}
	\centering
	\boldmath{$u$}
	
	\begin{subfigure}{0.49\textwidth}
		\centering
		\includegraphics[scale=0.11]{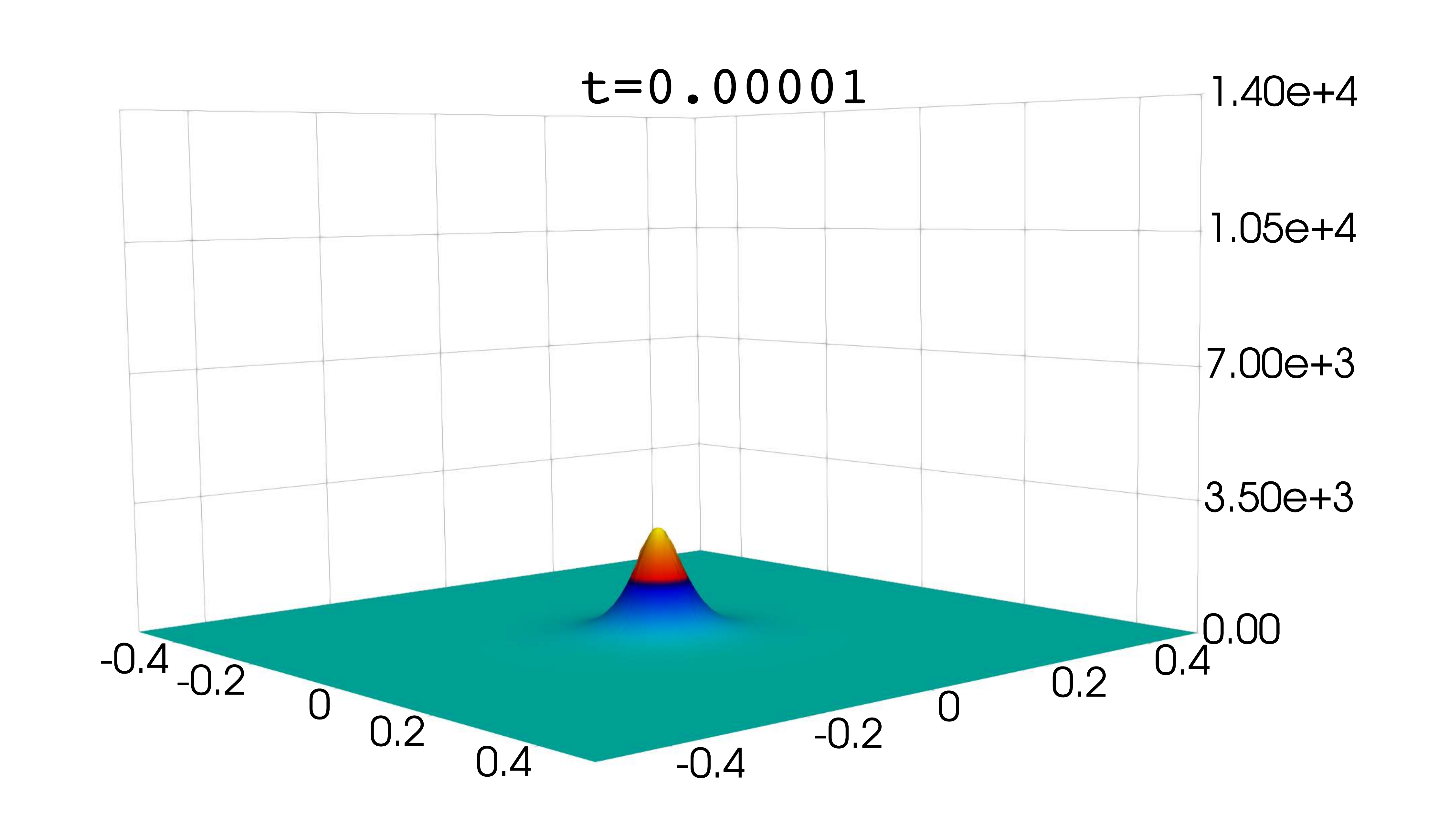}
	\end{subfigure}
	\begin{subfigure}{0.49\textwidth}
		\centering
		\includegraphics[scale=0.11]{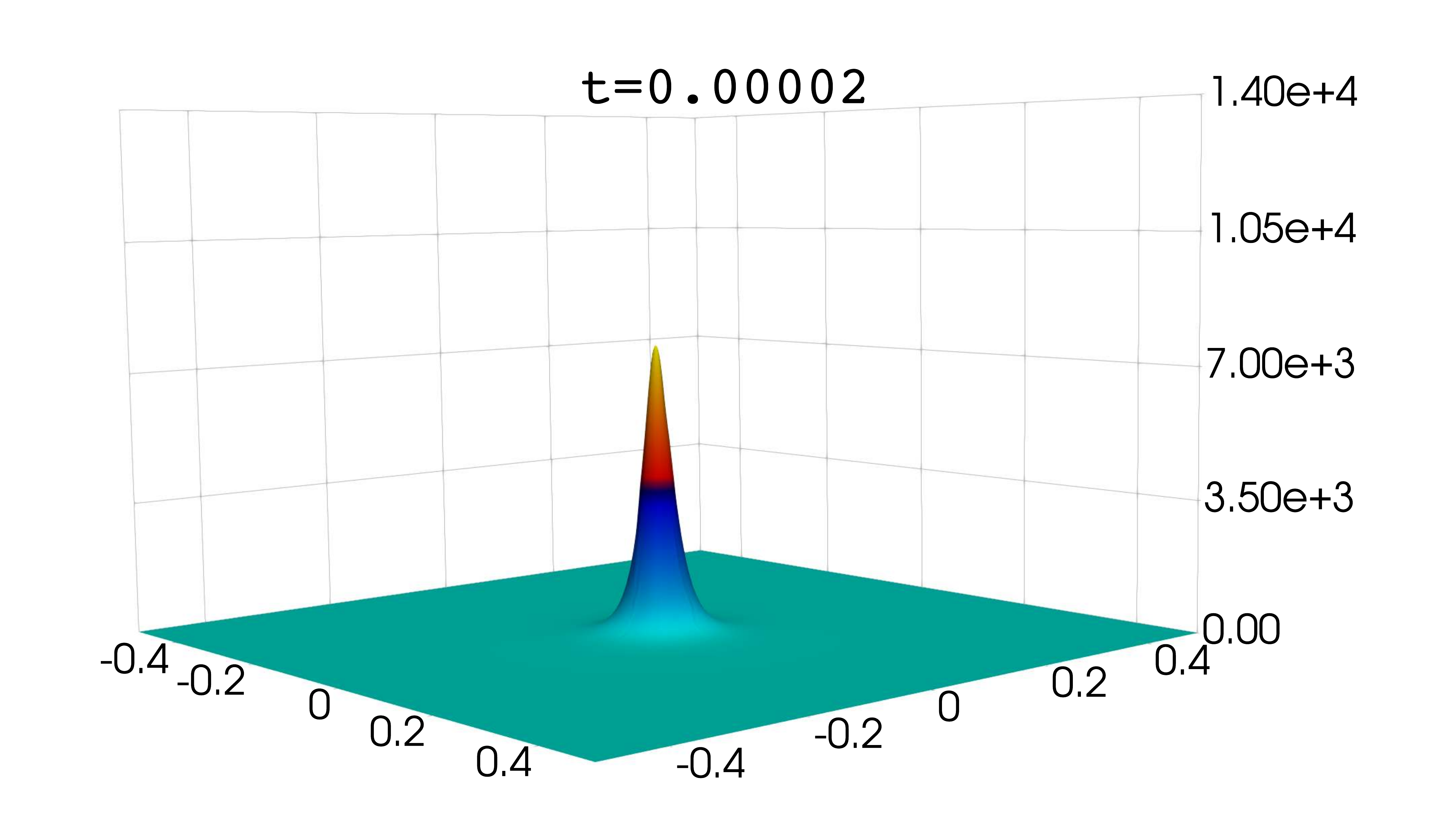}
	\end{subfigure}
	\begin{subfigure}{0.49\textwidth}
		\centering
		\includegraphics[scale=0.11]{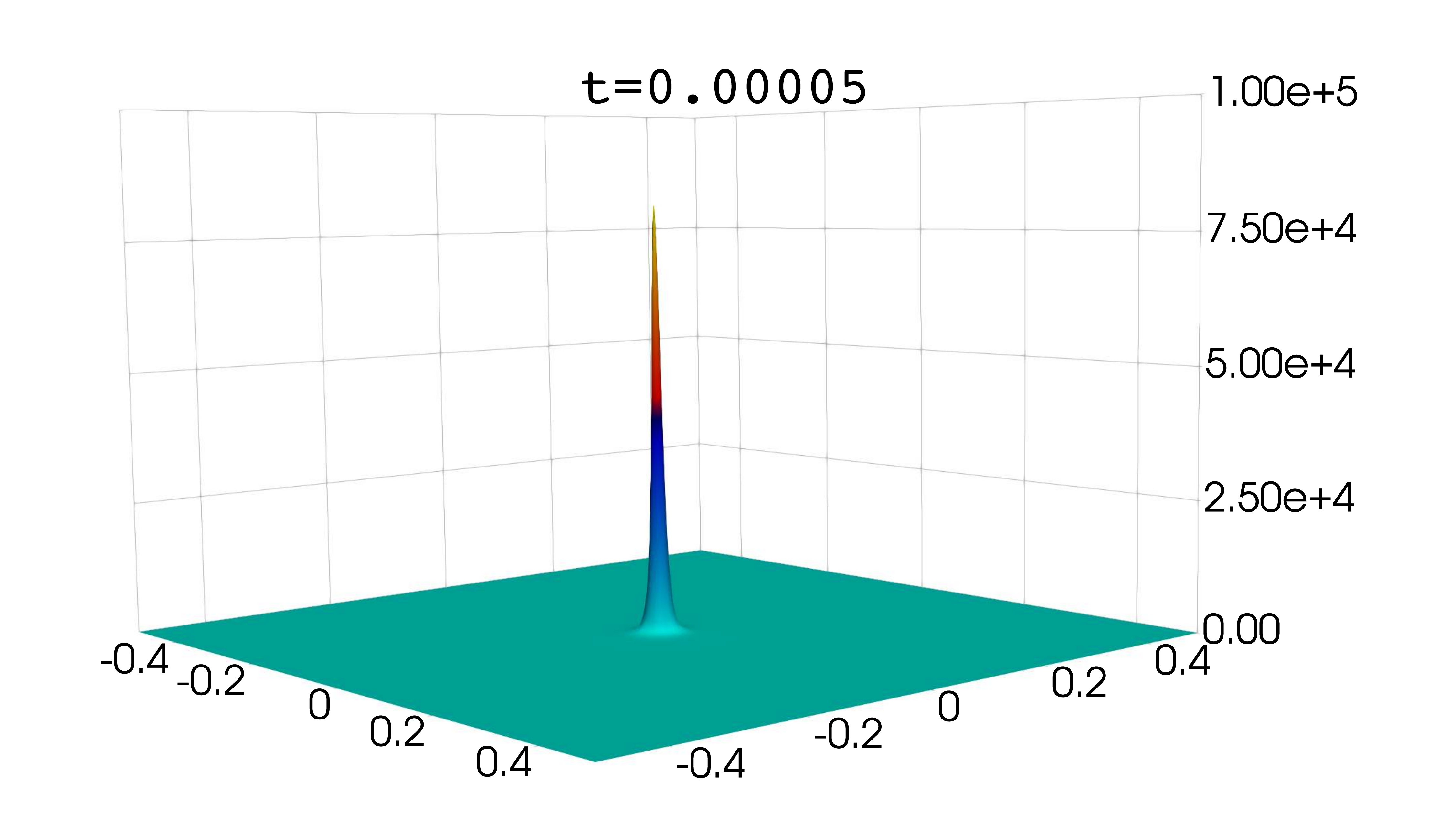}
	\end{subfigure}
	\begin{subfigure}{0.49\textwidth}
		\centering
		\includegraphics[scale=0.11]{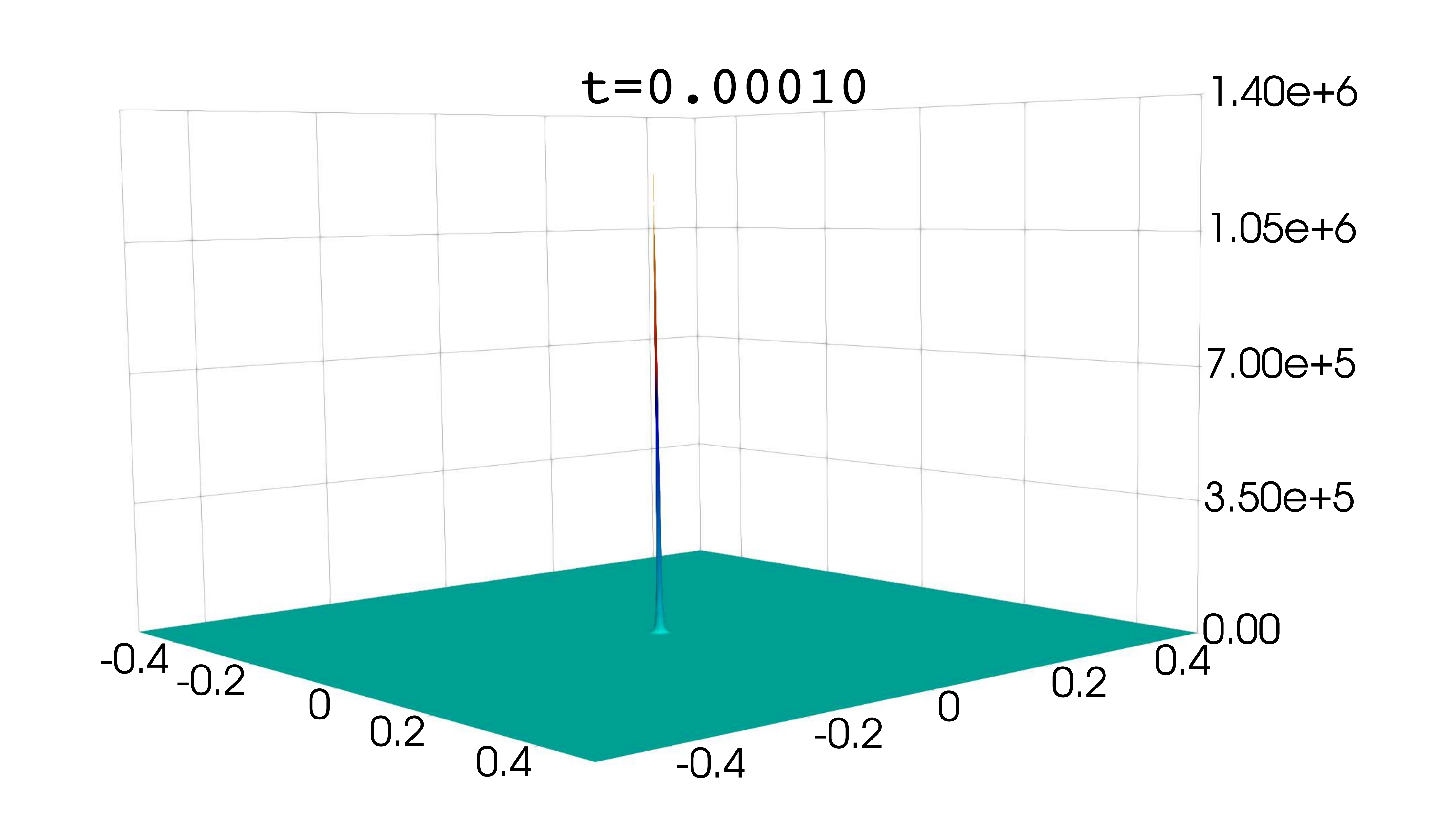}
	\end{subfigure}
	\caption{Blow-up of $u$ as in \cite{chertock_second-order_2008}.}
	\label{fig:u_chertock}
\end{figure}

\begin{figure}
	\centering
	\boldmath{$v$}
	
	\begin{subfigure}{0.49\textwidth}
		\centering
		\includegraphics[scale=0.11]{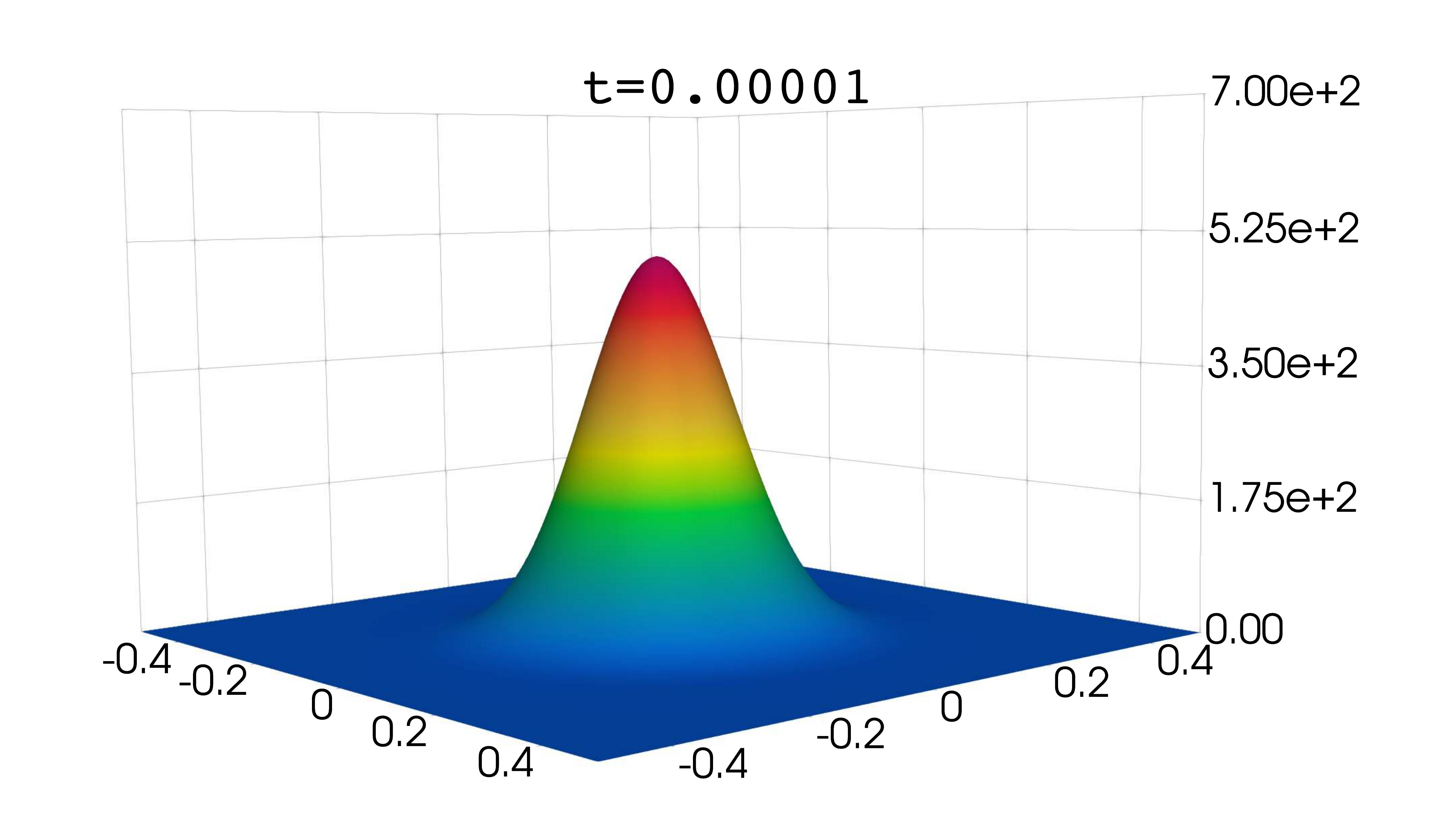}
	\end{subfigure}
	\begin{subfigure}{0.49\textwidth}
		\centering
		\includegraphics[scale=0.11]{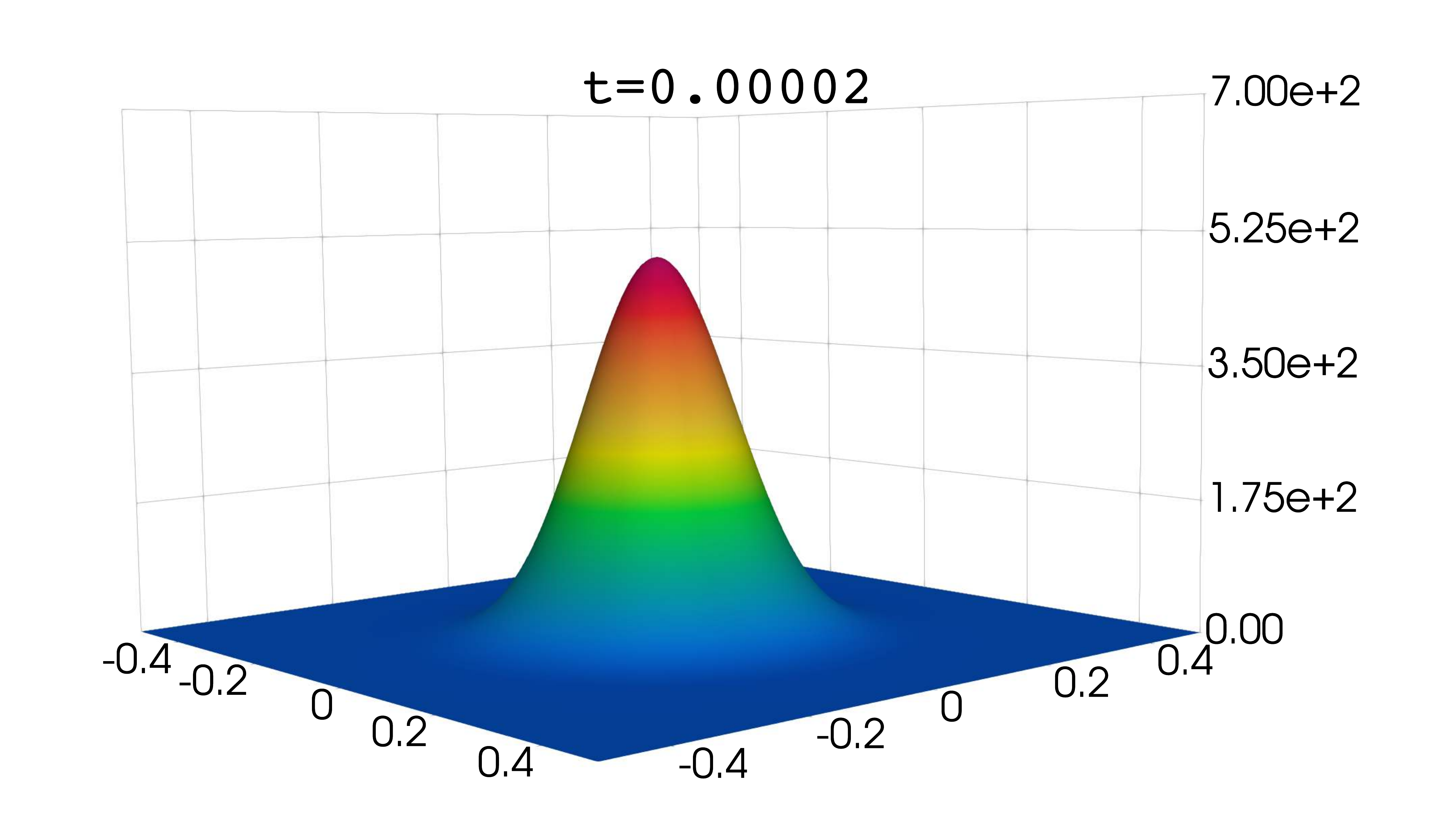}
	\end{subfigure}
	\begin{subfigure}{0.49\textwidth}
		\centering
		\includegraphics[scale=0.11]{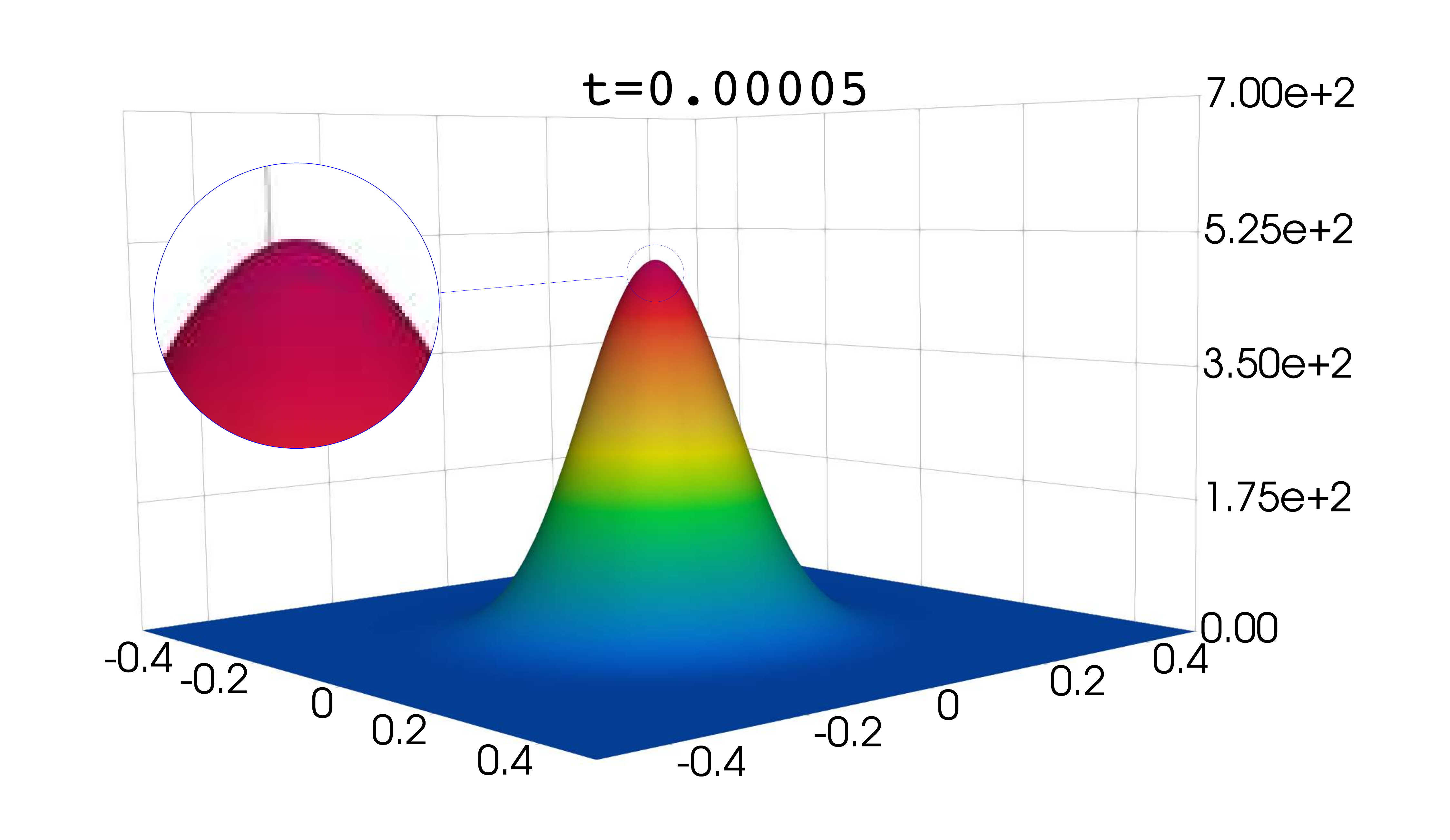}
	\end{subfigure}
	\begin{subfigure}{0.49\textwidth}
		\centering
		\includegraphics[scale=0.11]{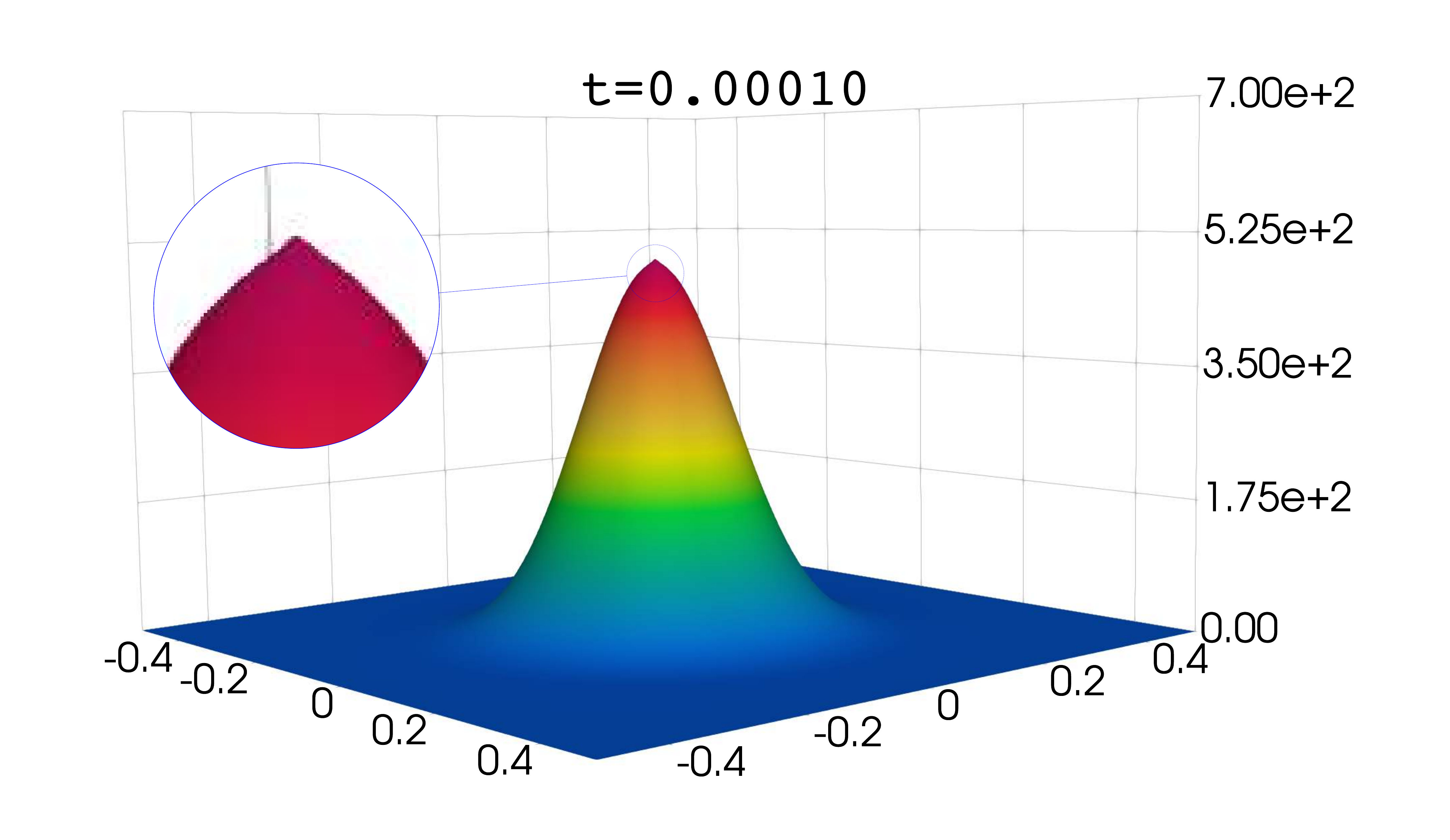}
	\end{subfigure}
	\caption{Aggregation of $v$ in the test in \cite{chertock_second-order_2008}.}
	\label{fig:v_chertock}
\end{figure}

Moreover, the positivity is preserved for both $u$ and $v$ as stated in Theorem~\ref{thm:principio_del_maximo_DG_KS} and, unlike the scheme presented in \cite{chertock_second-order_2008}, we are certain that the discrete energy decreases (both for $E(\cdot, \cdot)$ and $E(\cdot, \cdot)$) using the scheme \eqref{esquema_DG_upw_KS_non_truncated} as proved in Theorem~\ref{thm:energia_esquema_KS}. See Figures~\ref{fig:min-max_chertock} and~\ref{fig:energy_chertock}.

	\begin{remark}
		This test have been computed with greater and lower values of $\varepsilon$ including the limiting case $\varepsilon=0$. The difference in norms $L^2$ and $L^\infty$ are shown in Table~\ref{table:test_1_eps}. From a qualitative point of view, the solutions are indistinguishable.

		In this case, the numerical approximation works with $\varepsilon=0$ since the minimum of $u$ remains strictly positive and does not tend to $0$, it takes values around $10^{-19}$ during all the iterations computed. Below, in Remark~\ref{rmk:sin-cos}, we show a different test where we do have to take $\varepsilon>0$ to ensure convergence of the scheme.
	\end{remark}

\begin{table}
	\centering
	\begin{tabular}{||c||c|c||c|c||}
		\hline
		\multirow{2}*{$\varepsilon$} & \multicolumn{2}{|c||}{$\norma{\cdot}_{L^2}$} & \multicolumn{2}{|c||}{$\norma{\cdot}_{L^\infty}$}\\
		\cline{2-5}
		& $u$ & $v$ & $u$ & $v$\\
		\hline\hline
		$10^{-6}$ & $4.81\cdot 10^{-8}$ & $1.01\cdot 10^{-12}$ & $3.52\cdot 10^{-6}$ & $2.90\cdot 10^{-11}$ \\
		\hline
		$10^{-10}$ & $1.10\cdot 10^{-10}$ & $3.87\cdot 10^{-13}$ & $1.71\cdot 10^{-8}$ & $2.73\cdot 10^{-12}$ \\
		\hline
		$10^{-14}$ & $8.75\cdot 10^{-11}$ & $6.85\cdot 10^{-14}$ & $1.59\cdot 10^{-8}$ & $8.53\cdot 10^{-13}$ \\
		\hline
	\end{tabular}
	\caption{Difference between approximations of the test in subsection~\ref{test1} at $t=5\cdot 10^{-5}$ with respect to the solution with $\varepsilon=0$.}
	\label{table:test_1_eps}
\end{table}

An accuracy test in space (with $\varepsilon=10^{-10}$) has been also carried out where the solution obtained with $h\approx 7.071\cdot 10^{-4}$ and $\Delta t=10^{-6}$ has been taken as reference solution. The results shown in Tables~\ref{table:test_1_convergence_u} and~\ref{table:test_1_convergence_v} suggest first order of convergence in space in norm $L^2$ both for $u$ and $v$.

It is remarkable to notice that, although the $L^2$ errors of the approximation of $u$ may seem huge at first, particularly as it approaches the blow-up time, they are not that big in relative terms. As it can be observed in Figure~\ref{fig:u_chertock}, the maximum value reached by $u$ is around $10^3$ bigger than the $L^2$ errors shown in Table~\ref{table:test_1_convergence_u} at each time step. These errors will tend to vanish as the mesh is refined so that the spiky bulge in the middle of the domain is more accurately approximated.

Also, we would like to emphasize the difficulty of achieving such results as obtaining a reference solution in a blow-up situation where the exact solution tends to degenerate and huge gradients appear require a significant computational effort. In this regard, the reference solution has been computed in parallel using a domain decomposition technique.

\begin{table}
	\centering
	\begin{tabular}{||c|c|c|c|c|c|c|c||}
		\hline
		\multirow{2}{*}{$t$} & $h\approx 1.41\cdot 10^{-2}$ & \multicolumn{2}{c|}{$5h/7\approx 1.01\cdot 10^{-2}$} & \multicolumn{2}{c|}{$5h/9\approx 7.86\cdot 10^{-3}$} & \multicolumn{2}{c|}{$5h/11\approx 6.43\cdot 10^{-3}$} \\
		\cline{2-8}
		&Error &Error & Order &  Error & Order &  Error & Order \\
		\hline
		\hline
		$10^{-5}$  & $7.01$ & $5.08$ & $1.00$  &  $4.13$ & $0.83$ & $3.41$ & $0.95$ \\
		\hline
		\hline
        $2\cdot10^{-5}$  & $2.38\cdot 10$ & $1.70\cdot 10$ & $0.99$ & $1.42\cdot 10$ & $0.71$ & $1.09\cdot 10$ & $1.33$ \\
		\hline\hline
		$5\cdot10^{-5}$  & $3.15\cdot 10^2$ & $2.31\cdot 10^{2}$ & $0.92$ & $1.84\cdot 10^2$ & $0.92$ & $1.28\cdot 10^2$ & $1.81$ \\
		\hline
	\end{tabular}
	\captionof{table}{Accuracy test in norm $L^2$ for $u$ (test in subsection~\ref{test1}).}
	\label{table:test_1_convergence_u}
\end{table}

\begin{table}
	\centering
	\begin{tabular}{||c|c|c|c|c|c|c|c||}
		\hline
		\multirow{2}{*}{$t$} & $h\approx 1.41\cdot 10^{-2}$ & \multicolumn{2}{c|}{$5h/7\approx 1.01\cdot 10^{-2}$} & \multicolumn{2}{c|}{$5h/9\approx 7.86\cdot 10^{-3}$} & \multicolumn{2}{c|}{$5h/11\approx 6.43\cdot 10^{-3}$} \\
		\cline{2-8}
		&Error &Error & Order &  Error & Order &  Error & Order \\
		\hline
		\hline
		$10^{-5}$  & $1.43\cdot 10^{-2}$ & $1.04\cdot 10^{-2}$ & $0.95$ & $7.36\cdot 10^{-3}$& $1.36$ &$5.63\cdot 10^{-3}$ & $1.33$ \\
		\hline
		\hline
        $2\cdot10^{-5}$  & $2.04\cdot 10^{-2}$ & $1.27\cdot 10^{-2}$ & $1.40$ & $8.82\cdot 10^{-3}$ & $1.46$ & $6.76\cdot 10^{-3}$ & $1.32$ \\
		\hline\hline
		$5\cdot10^{-5}$  & $2.54\cdot 10^{-2}$ & $1.51\cdot 10^{-2}$ & $1.55$ & $1.27\cdot 10^{-2}$ & $0.68$ & $9.94\cdot 10^{-3}$ & $1.22$\\
		\hline
	\end{tabular}
	\captionof{table}{Accuracy test in norm $L^2$ for $v$ (test in subsection~\ref{test1}).}
	\label{table:test_1_convergence_v}
\end{table}

\begin{figure}
	\centering
	\begin{subfigure}{0.49\textwidth}
		\centering
		\boldmath{$u$}
		
		\includegraphics[scale=0.5]{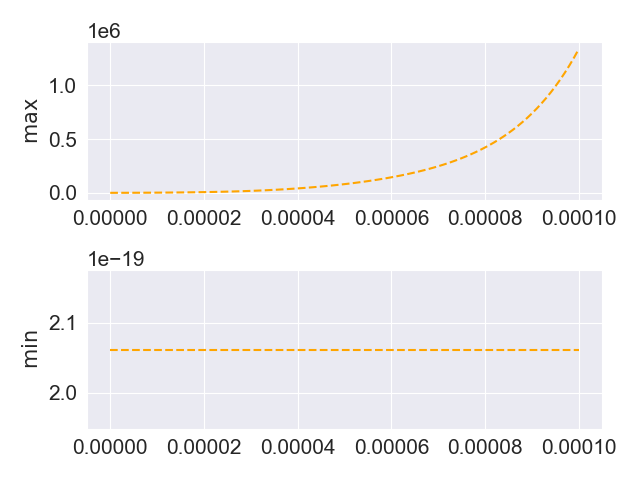}
	\end{subfigure}
	\begin{subfigure}{0.49\textwidth}
		\centering
		\boldmath{$v$}
		
		\includegraphics[scale=0.5]{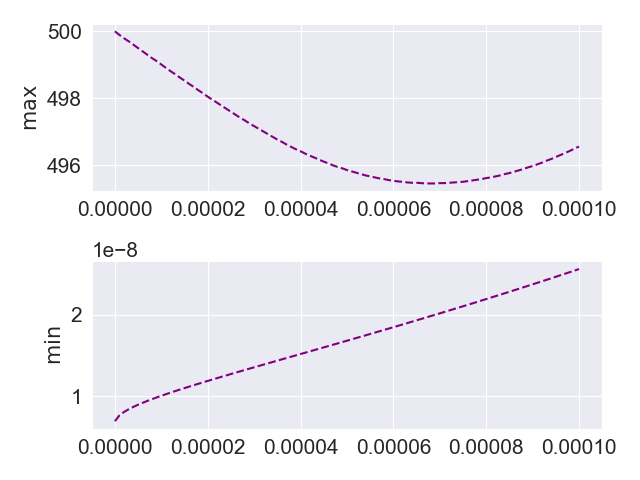}
	\end{subfigure}
	\caption{Minimum and maximum of $u$ and $v$ over time in the case shown in \cite{chertock_second-order_2008}.}
	\label{fig:min-max_chertock}
\end{figure}

\begin{figure}
	\centering
	\includegraphics[scale=0.5]{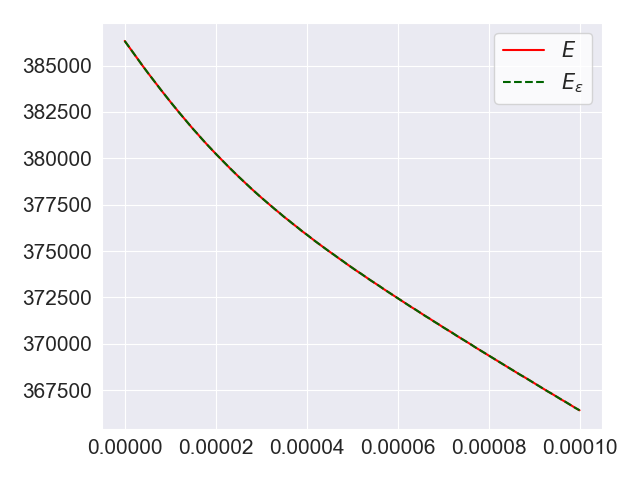}
	\caption{Discrete energy over time in the case shown in \cite{chertock_second-order_2008}.}
	\label{fig:energy_chertock}
\end{figure}

\subsection{Three bulges of cells}
\label{test2}
Now, we show the results for a similar test to the one that appears in \cite{saito_conservative_2007}. In this case, we take the parabolic-elliptic case ($\tau=0$) so that the characteristic speed of $v$ is much faster than the characteristic speed of $u$. Moreover, we consider the initial condition
\begin{equation*}
u_0 = 900 e^{-100((x-0.2)^2 + y^2)} + 800 e^{-100(x^2 + (y-0.2)^2)} + 1000 e^{-100 ((x-0.3)^2 + (y-0.3)^2)},
\end{equation*}
which is plotted in Figure~\ref{fig:ic_saito}. As stated in \cite{saito_conservative_2007} and the references therein, since $\norma{u_0}_{L^1(\Omega)}>8\pi$, the solution is expected to blow-up in finite time.

\begin{figure}
	\centering
	\boldmath{$u_0$}
	
	\includegraphics[scale=0.11]{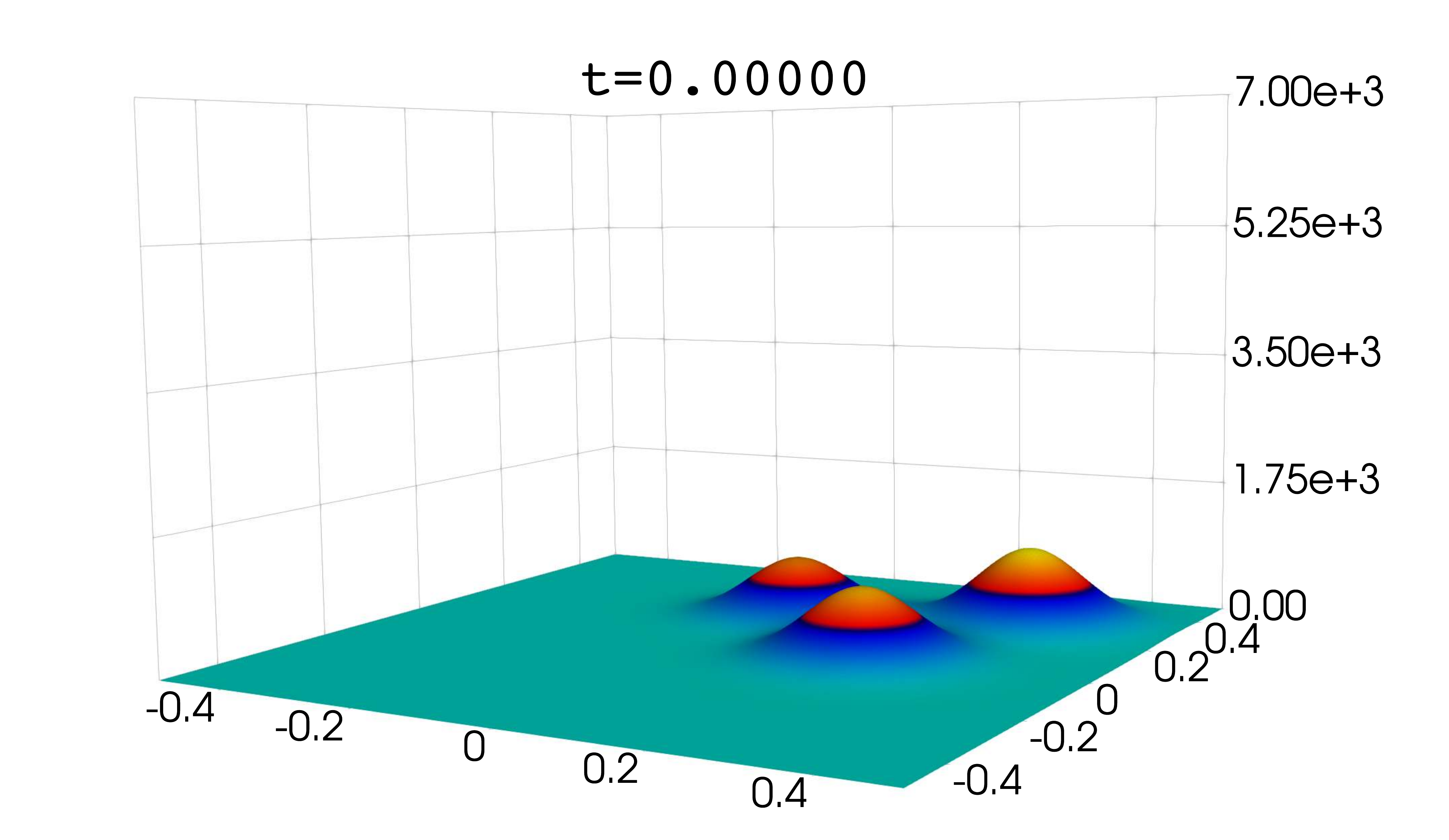}
	\caption{Initial condition with three cell bulges (similar to the one in \cite{chertock_second-order_2008}).}
	\label{fig:ic_saito}
\end{figure}

In Figures~\ref{fig:u_saito_1} and \ref{fig:v_saito_1} we can observe the result of the test with $h\approx2.83\cdot 10^{-2}$ and $\Delta t=10^{-5}$. In this case, the qualitative behavior of the solution is similar to the one shown in \cite{saito_conservative_2007}, with the peak of cells moving towards a corner of the domain. However, the qualitative behavior of the solution is different if we take $h\approx7.07\cdot 10^{-3}$ and $\Delta t=10^{-5}$. Now, as represented in Figures~\ref{fig:u_saito_2} and \ref{fig:v_saito_2}, a blow-up phenomenon seems to occur in finite time and the peak of cells remains motionless far away from the corners of the domain.

\begin{figure}
	\centering
	\boldmath{$u$}
	
	\begin{subfigure}{0.49\textwidth}
		\centering
		\includegraphics[scale=0.11]{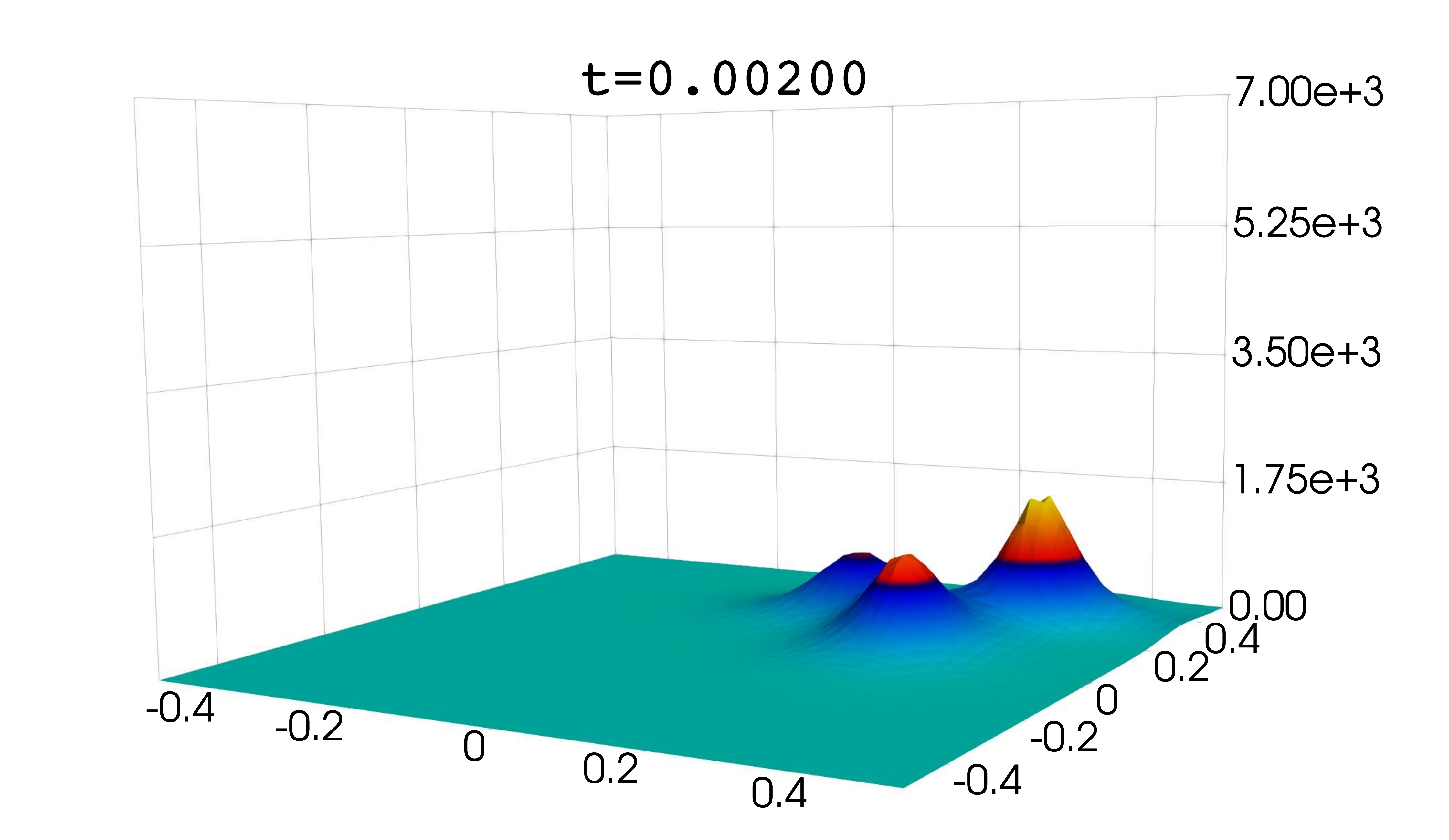}
	\end{subfigure}
	\begin{subfigure}{0.49\textwidth}
		\centering
		\includegraphics[scale=0.11]{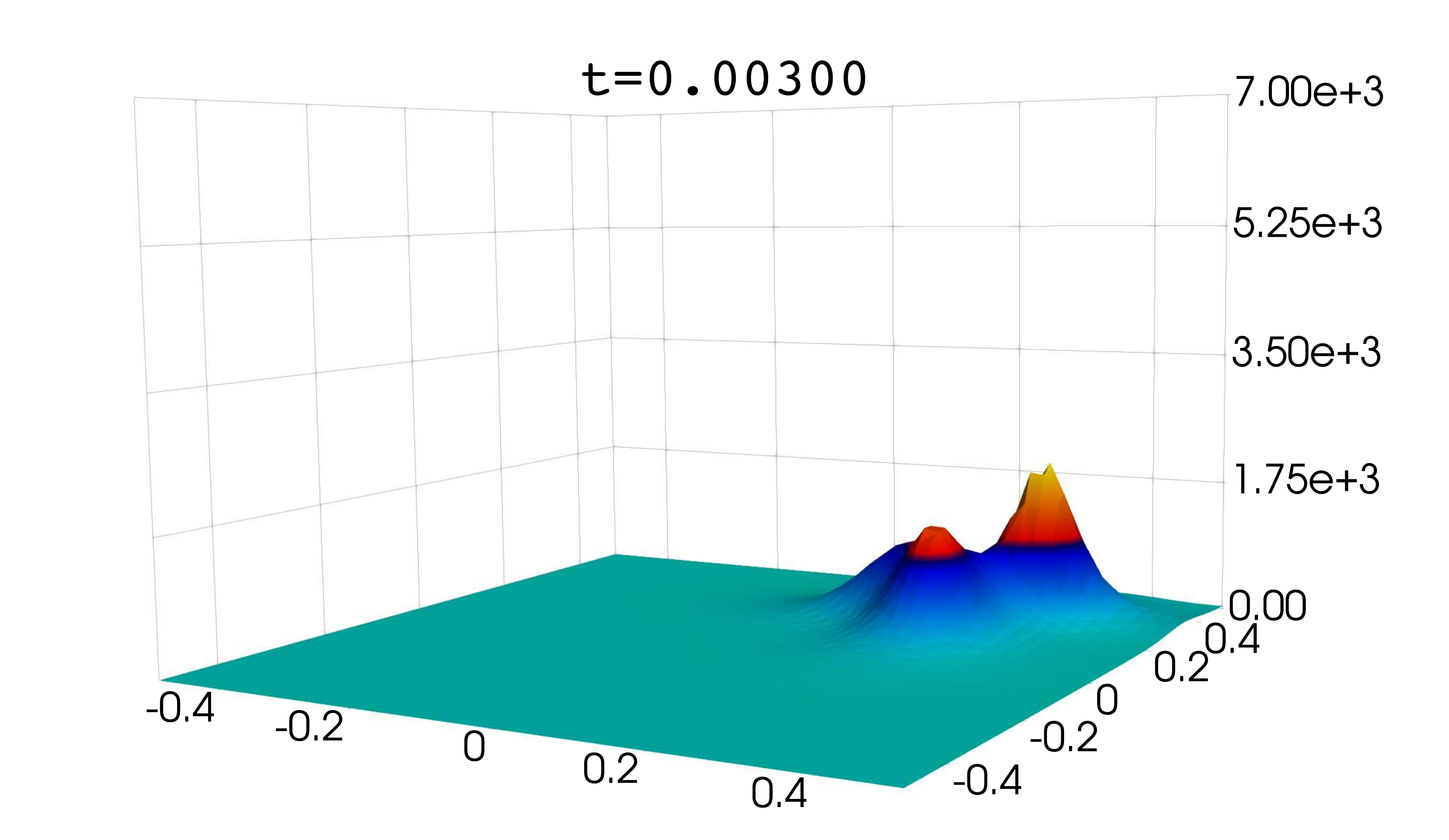}
	\end{subfigure}
	\begin{subfigure}{0.49\textwidth}
		\centering
		\includegraphics[scale=0.11]{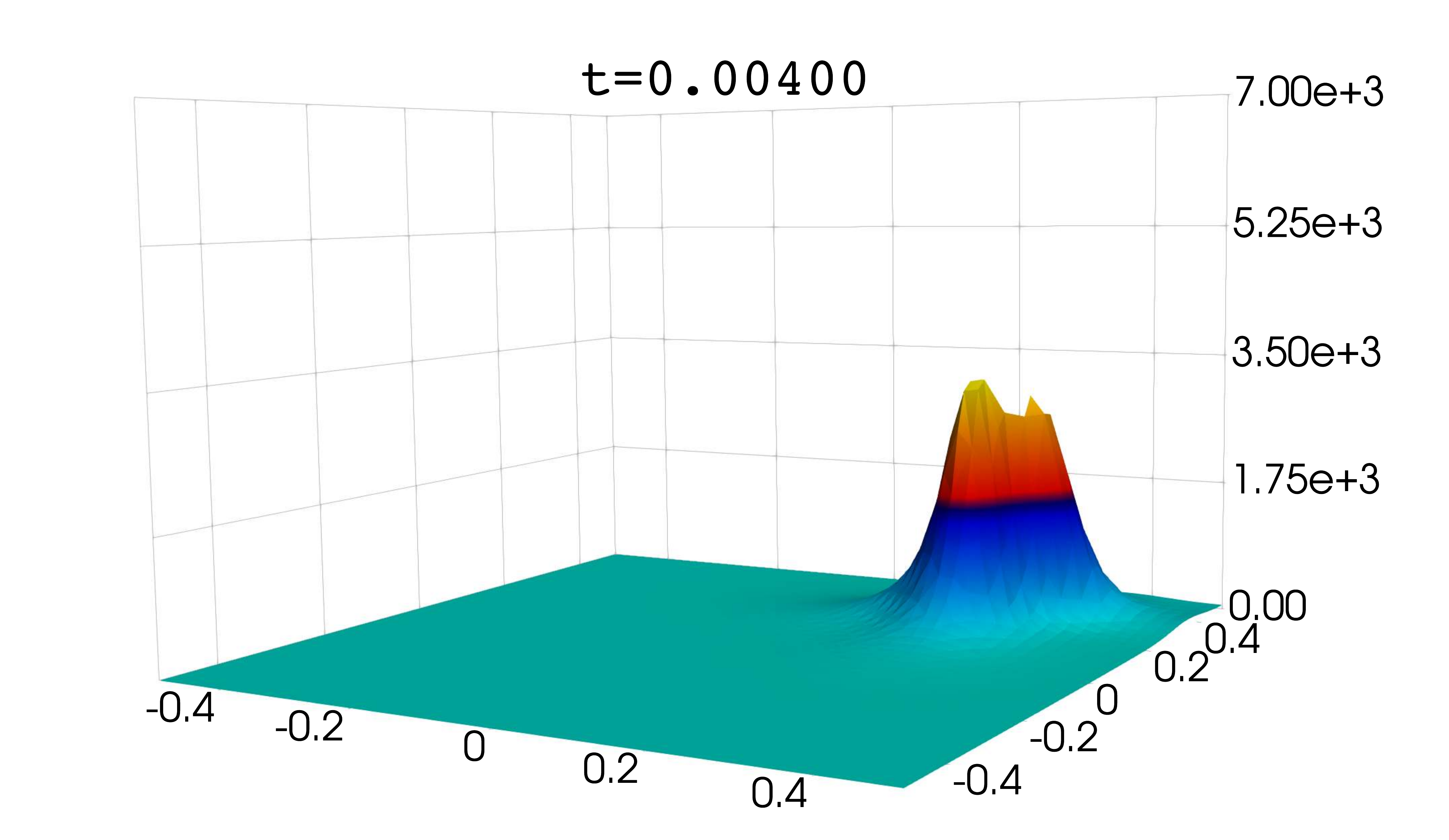}
	\end{subfigure}
	\begin{subfigure}{0.49\textwidth}
		\centering
		\includegraphics[scale=0.11]{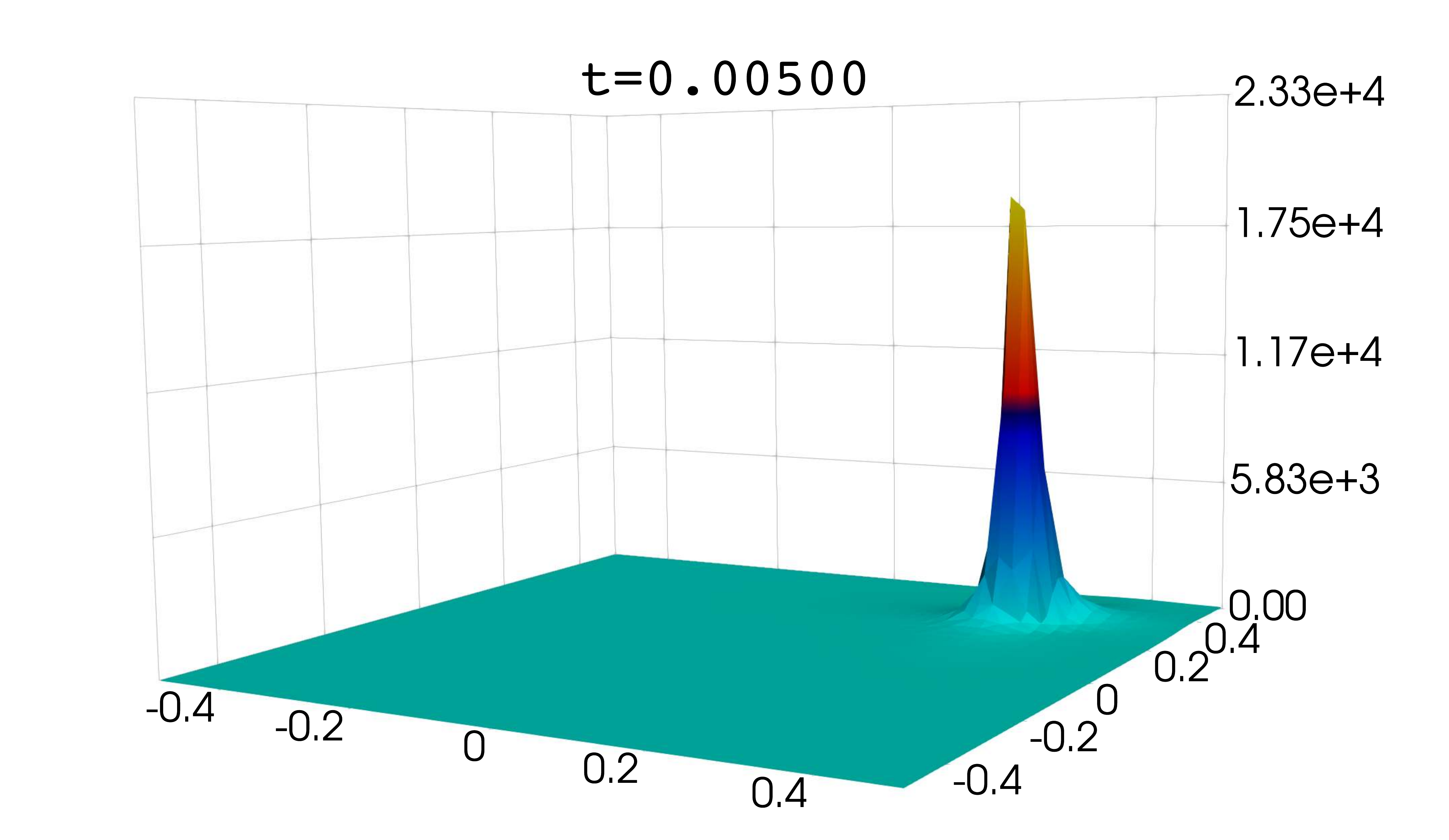}
	\end{subfigure}
	\begin{subfigure}{0.49\textwidth}
		\centering
		\includegraphics[scale=0.11]{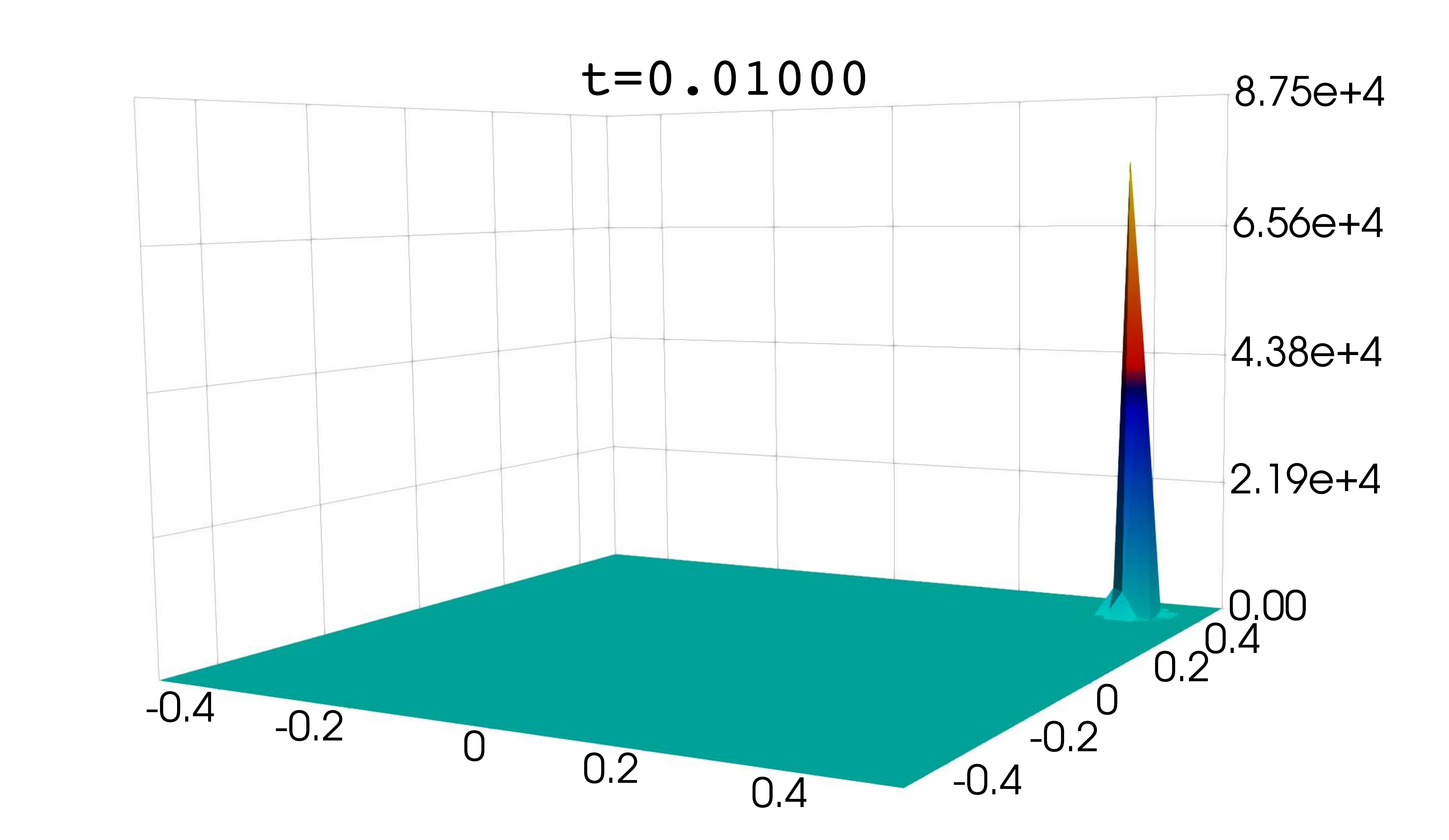}
	\end{subfigure}
	\begin{subfigure}{0.49\textwidth}
		\centering
		\includegraphics[scale=0.11]{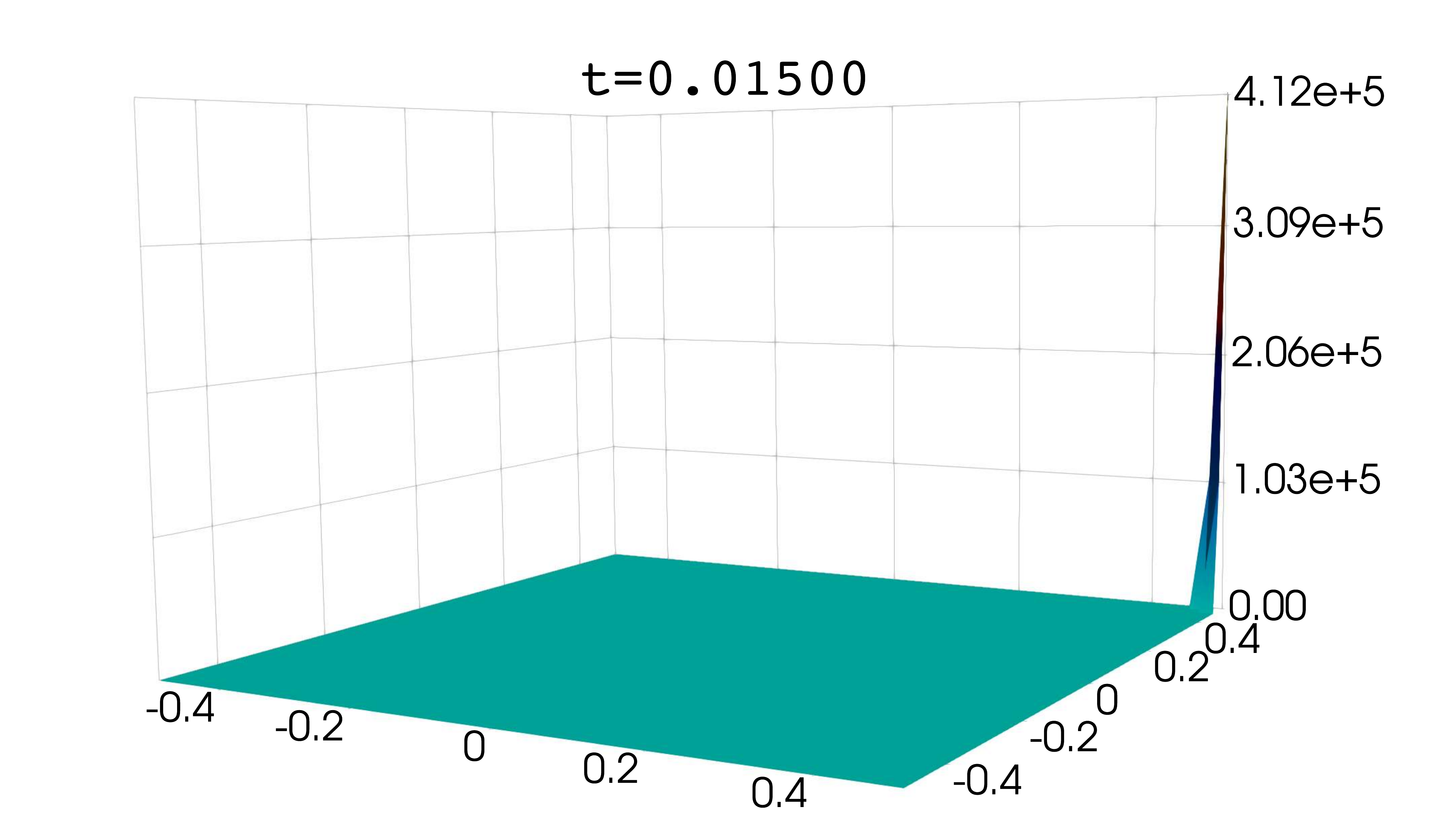}
	\end{subfigure}
	\caption{Aggregation of three cell bulges with $h\approx2.83\cdot 10^{-2}$.}
	\label{fig:u_saito_1}
\end{figure}

\begin{figure}
	\centering
	\boldmath{$v$}
	
	\begin{subfigure}{0.49\textwidth}
		\centering
		\includegraphics[scale=0.11]{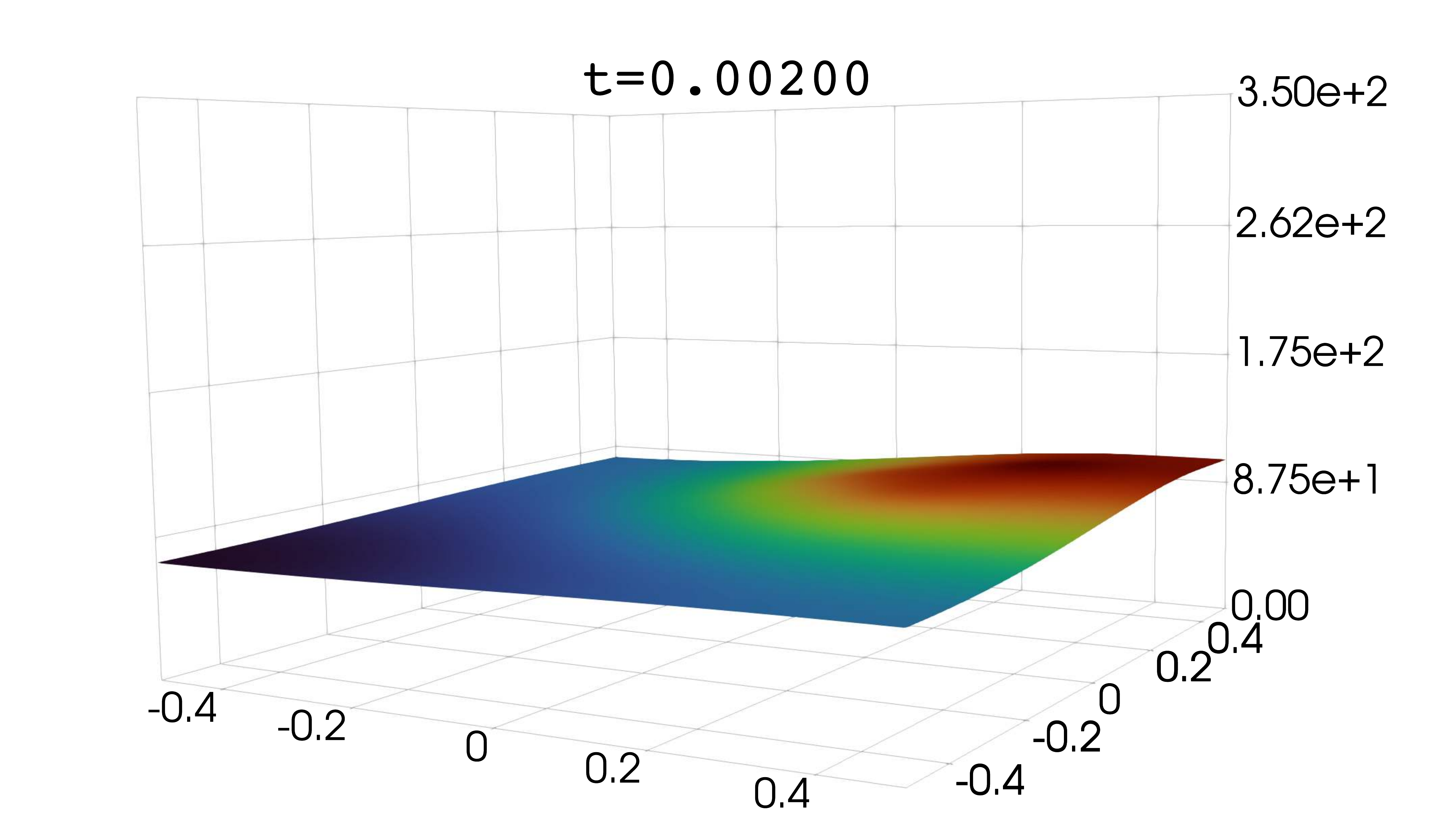}
	\end{subfigure}
	\begin{subfigure}{0.49\textwidth}
		\centering
		\includegraphics[scale=0.11]{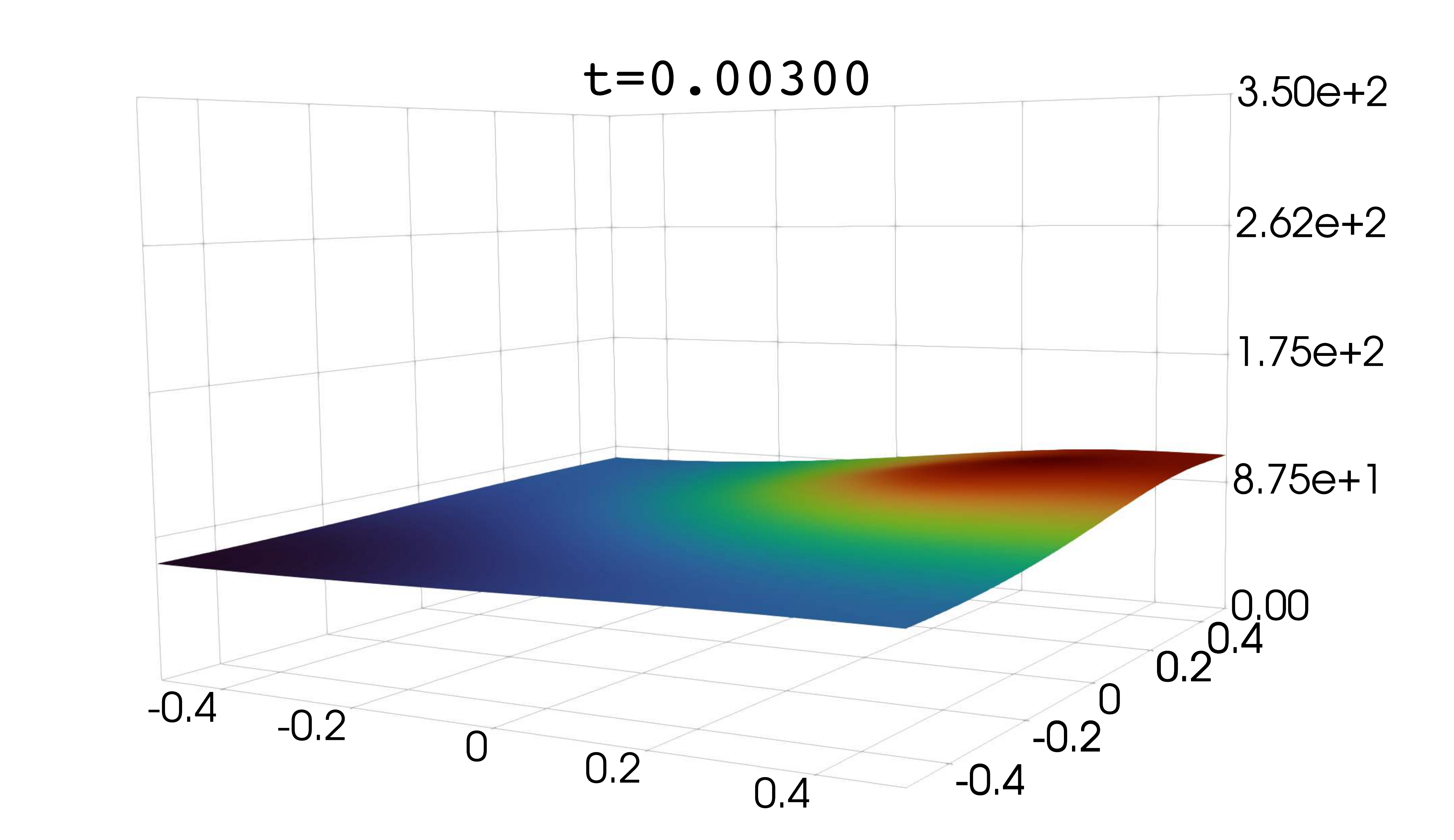}
	\end{subfigure}
	\begin{subfigure}{0.49\textwidth}
		\centering
		\includegraphics[scale=0.11]{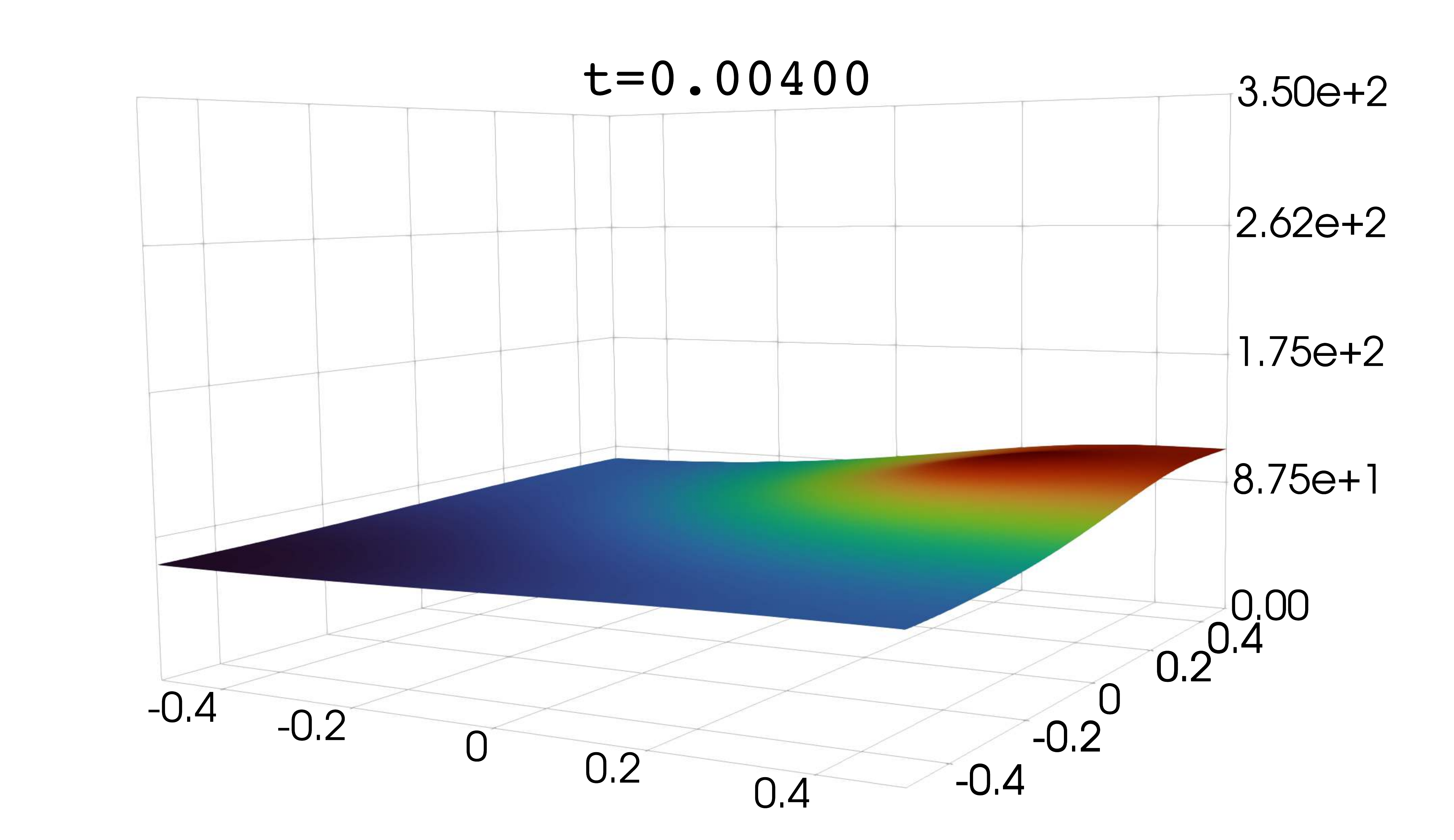}
	\end{subfigure}
	\begin{subfigure}{0.49\textwidth}
		\centering
		\includegraphics[scale=0.11]{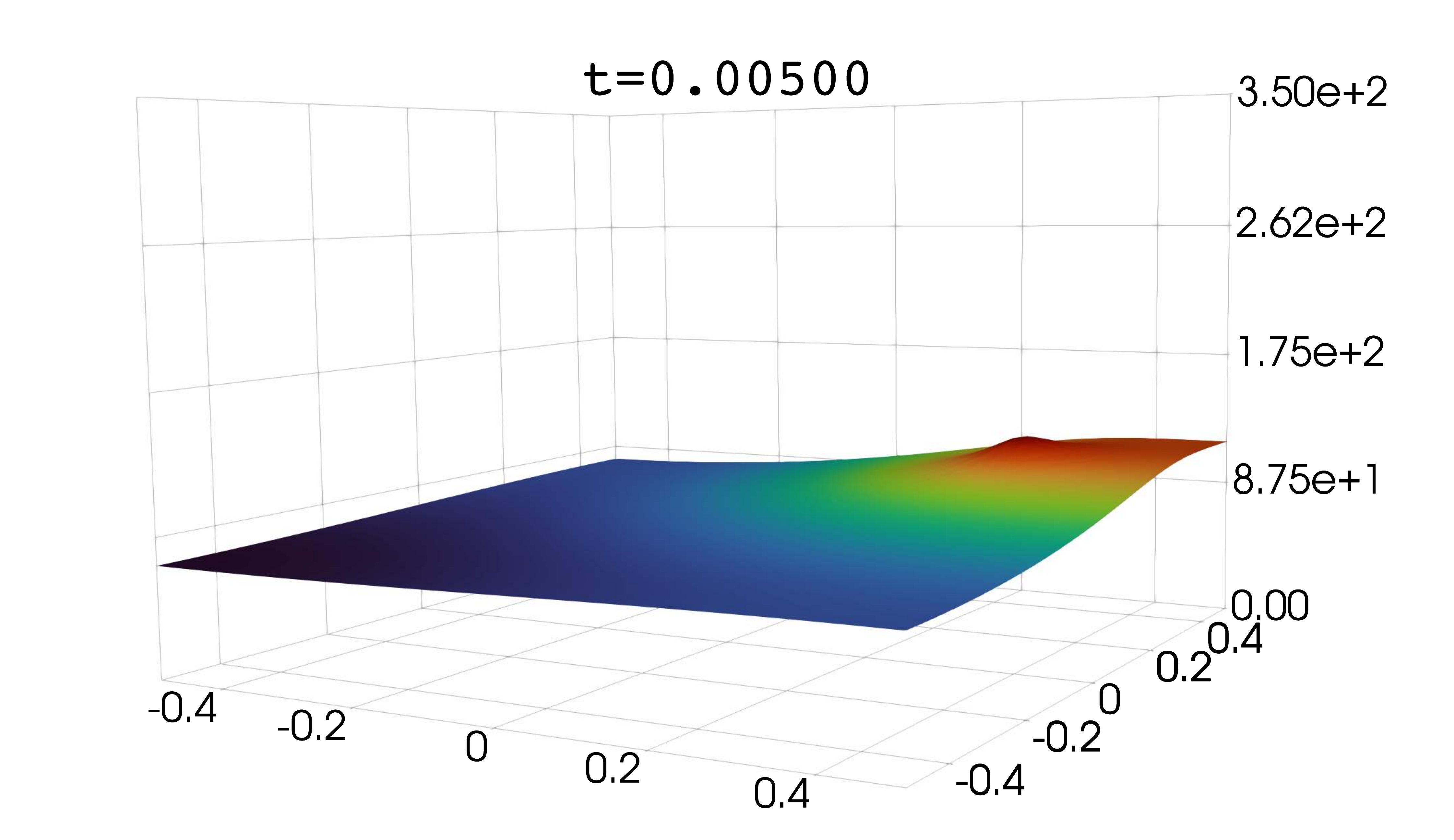}
	\end{subfigure}
	\begin{subfigure}{0.49\textwidth}
		\centering
		\includegraphics[scale=0.11]{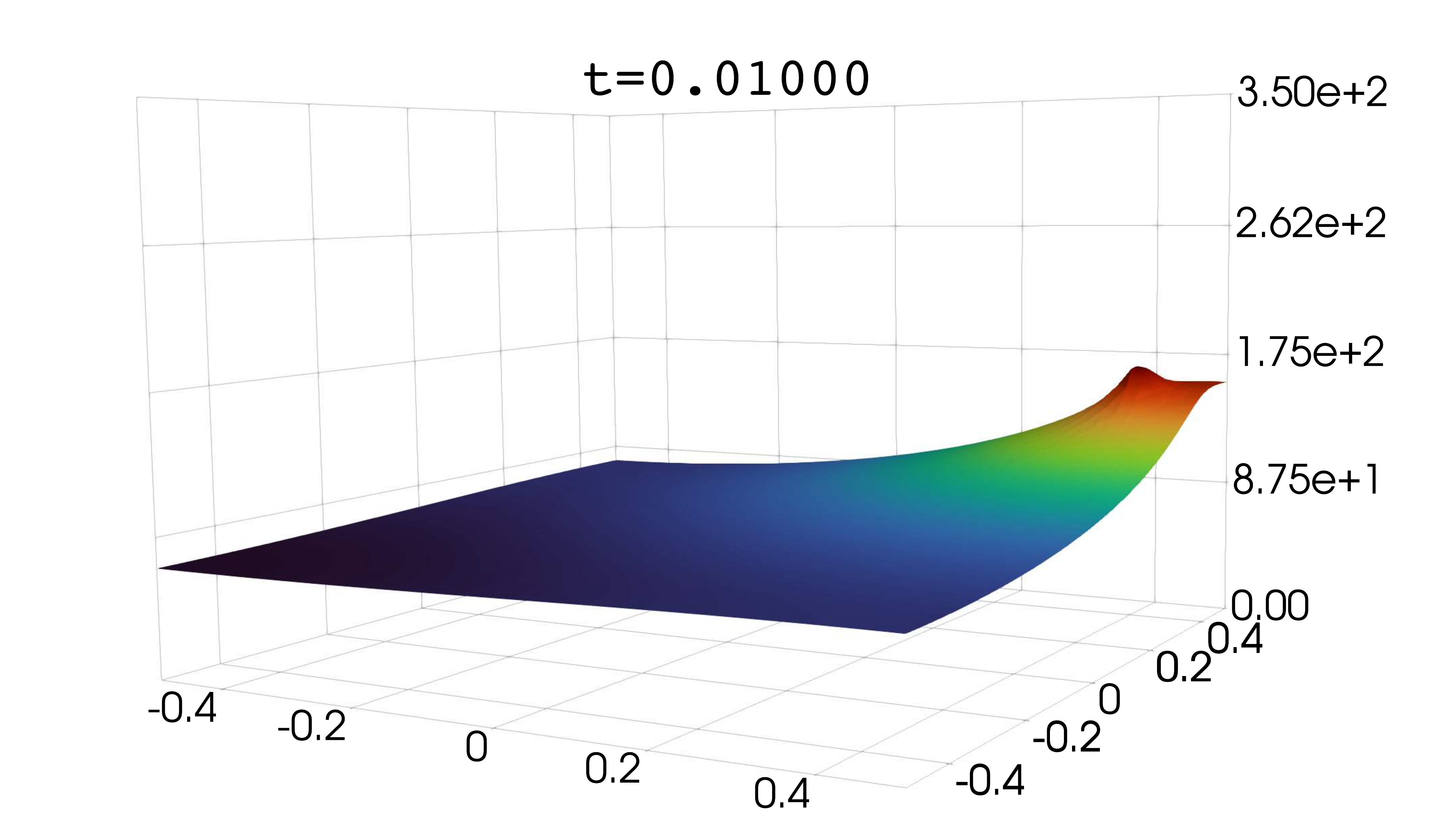}
	\end{subfigure}
	\begin{subfigure}{0.49\textwidth}
		\centering
		\includegraphics[scale=0.11]{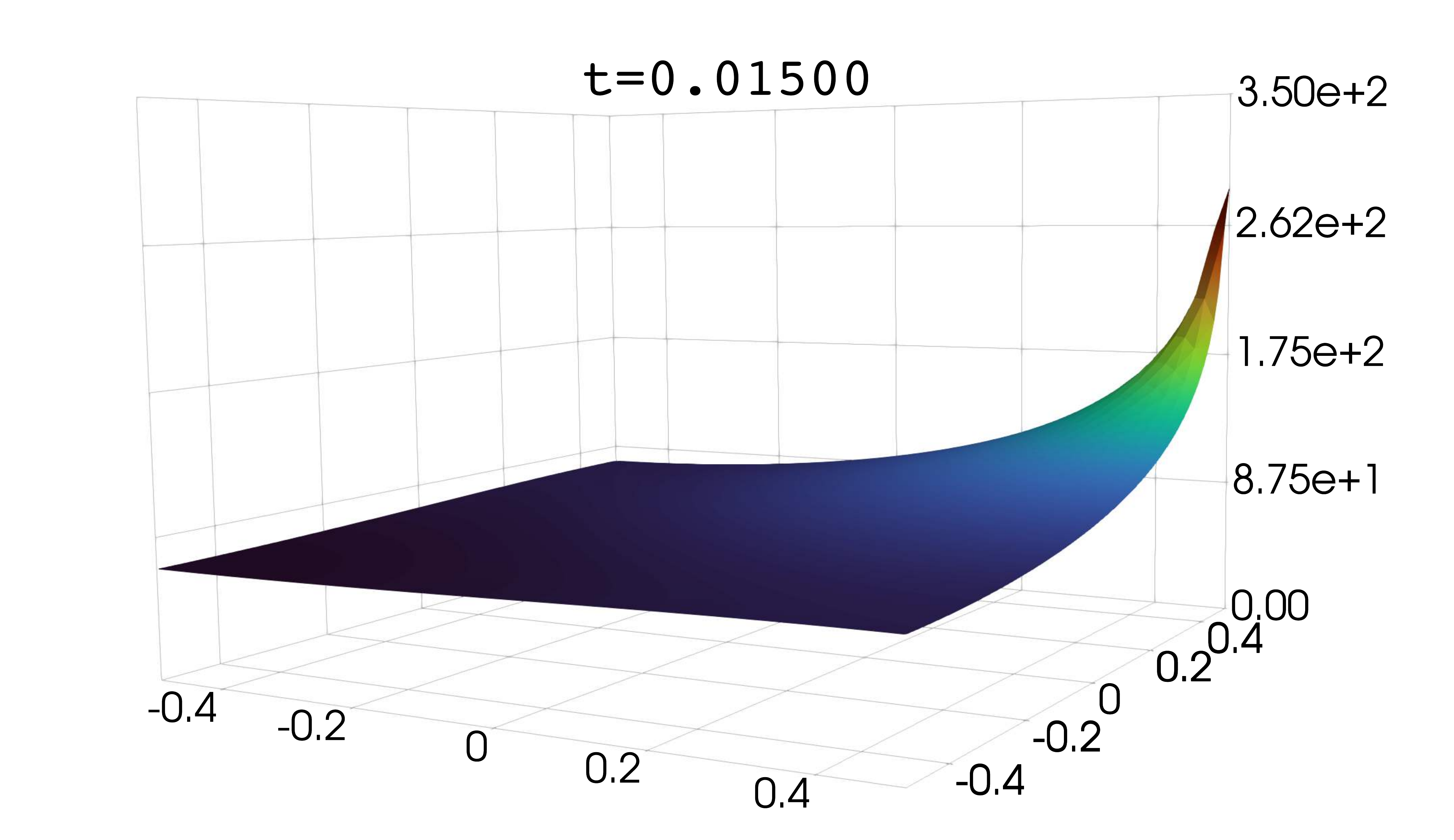}
	\end{subfigure}
	\caption{Chemoattractant in the case of three cell bulges with $h\approx2.83\cdot 10^{-2}$.}
	\label{fig:v_saito_1}
\end{figure}

\begin{figure}
	\centering
	\boldmath{$u$}
	
	\begin{subfigure}{0.49\textwidth}
		\centering
		\includegraphics[scale=0.11]{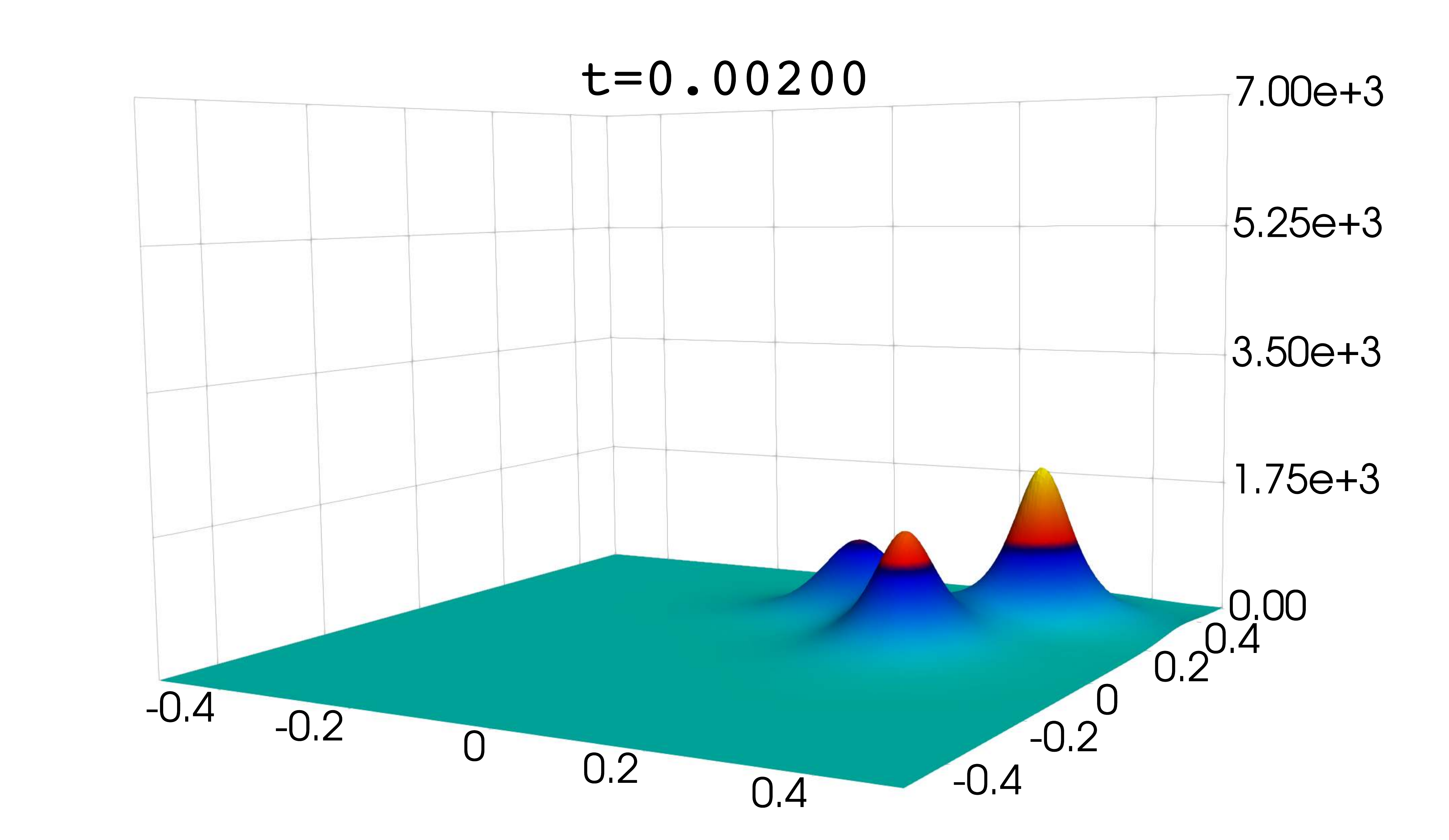}
	\end{subfigure}
	\begin{subfigure}{0.49\textwidth}
		\centering
		\includegraphics[scale=0.11]{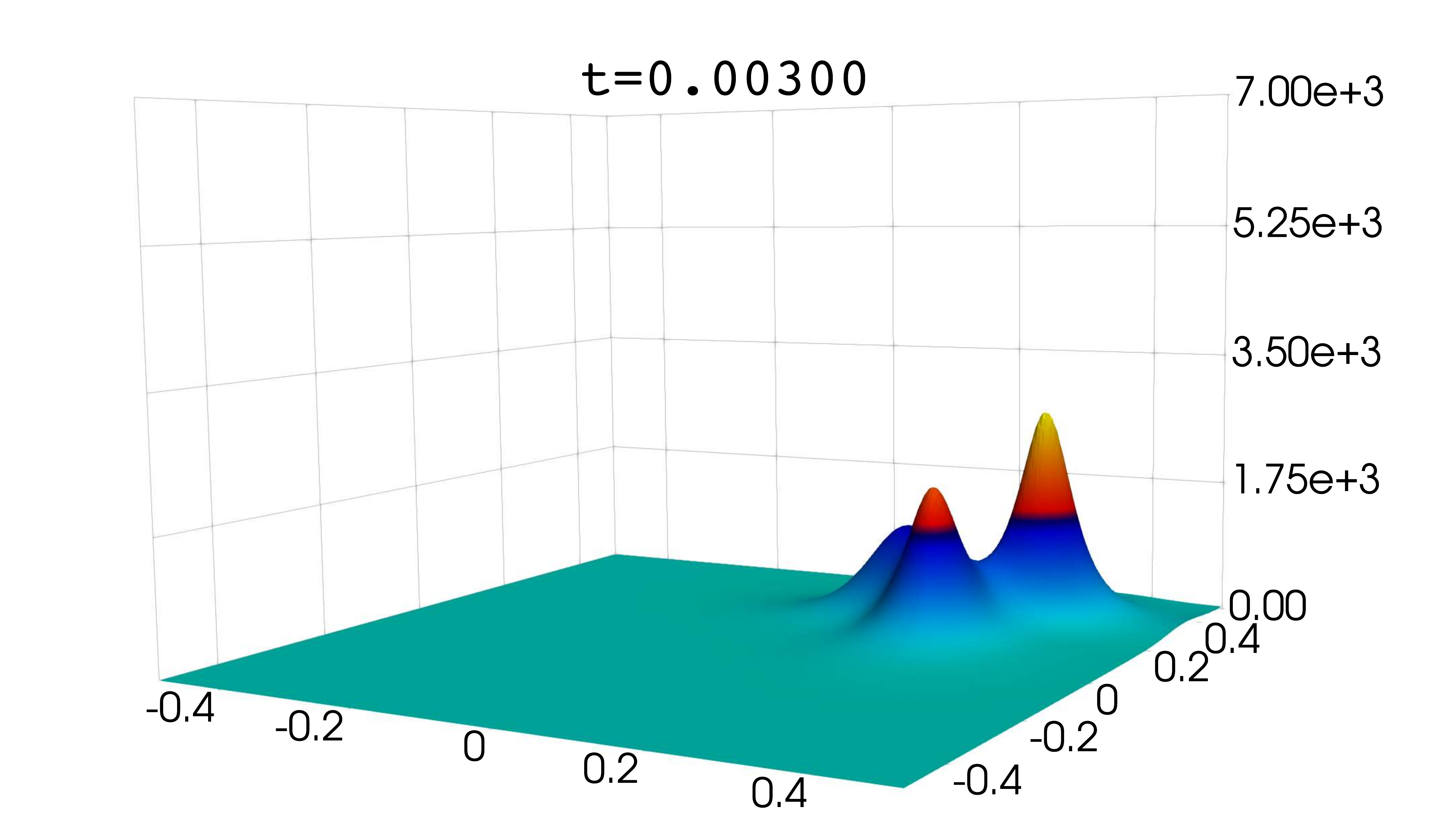}
	\end{subfigure}
	\begin{subfigure}{0.49\textwidth}
		\centering
		\includegraphics[scale=0.11]{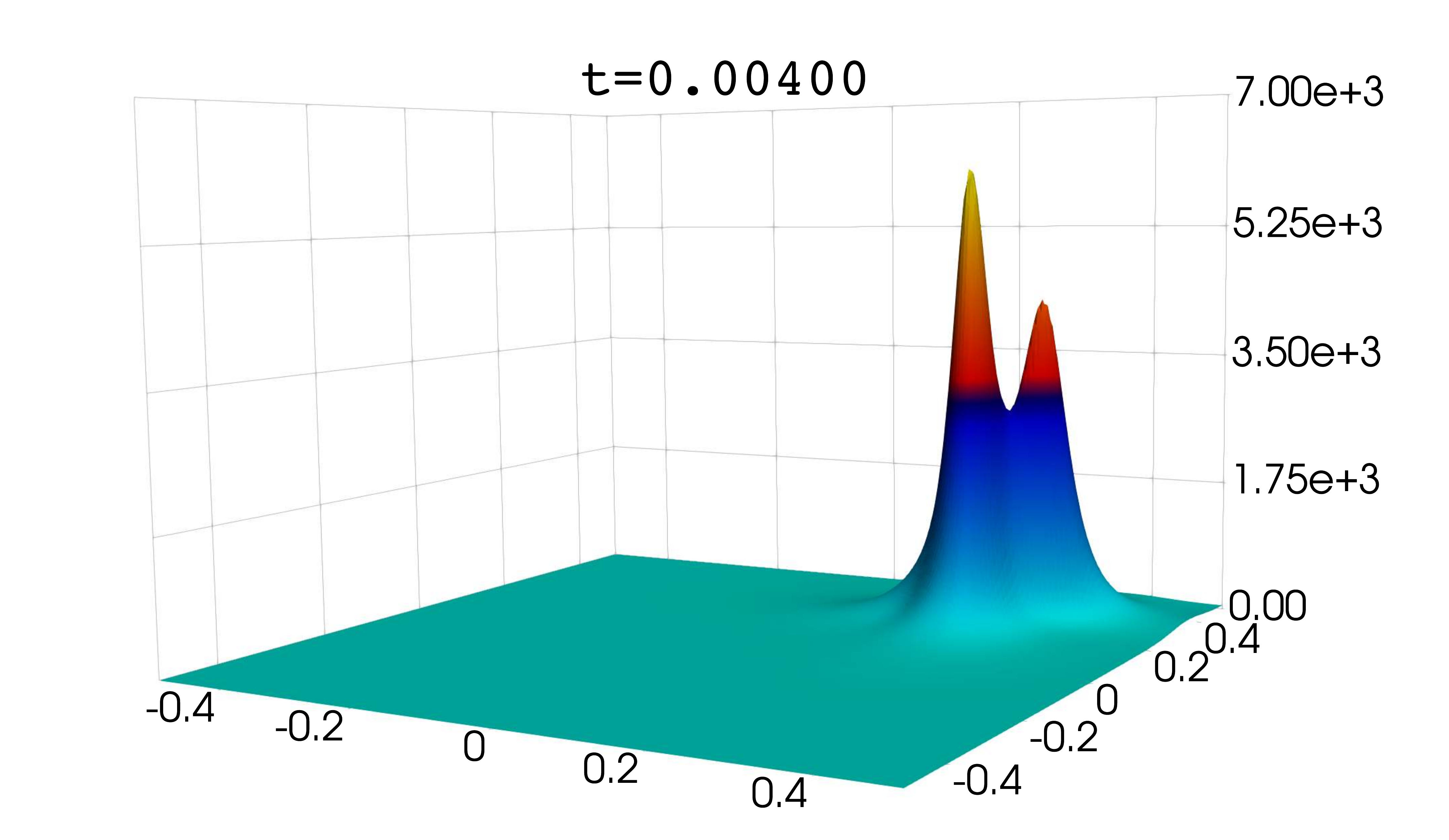}
	\end{subfigure}
	\begin{subfigure}{0.49\textwidth}
		\centering
		\includegraphics[scale=0.11]{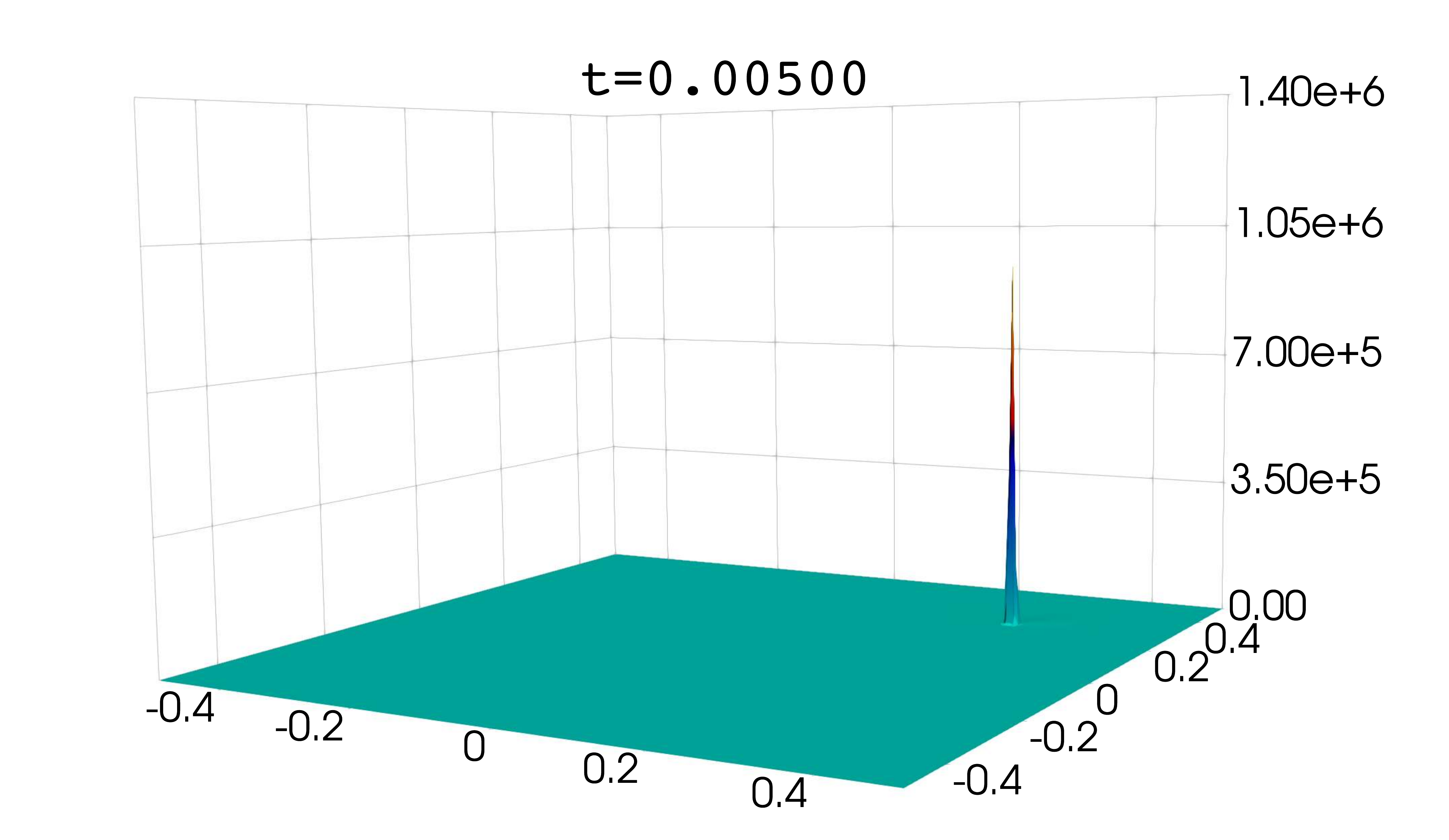}
	\end{subfigure}
	\begin{subfigure}{0.49\textwidth}
		\centering
		\includegraphics[scale=0.11]{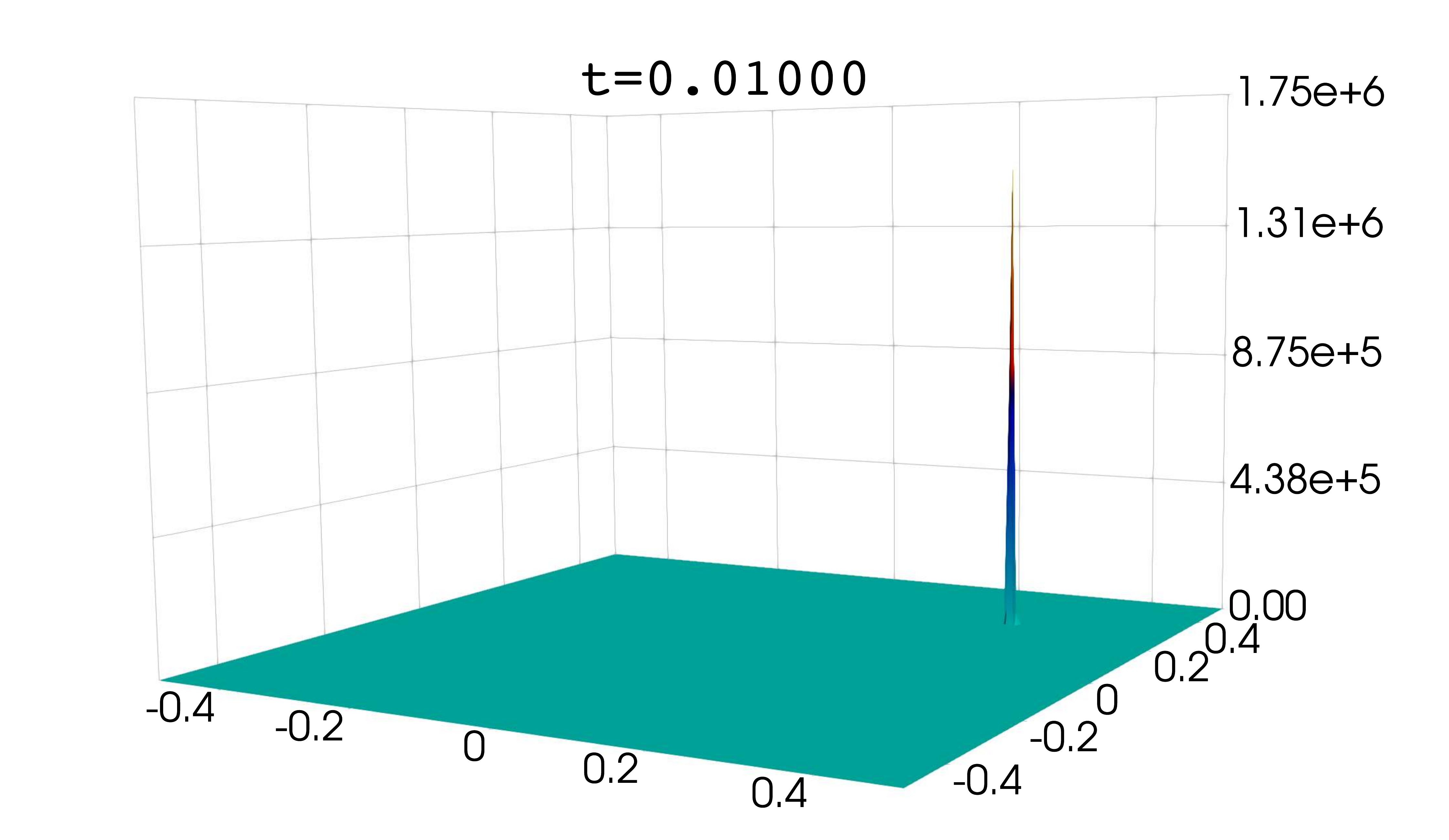}
	\end{subfigure}
	\begin{subfigure}{0.49\textwidth}
		\centering
		\includegraphics[scale=0.11]{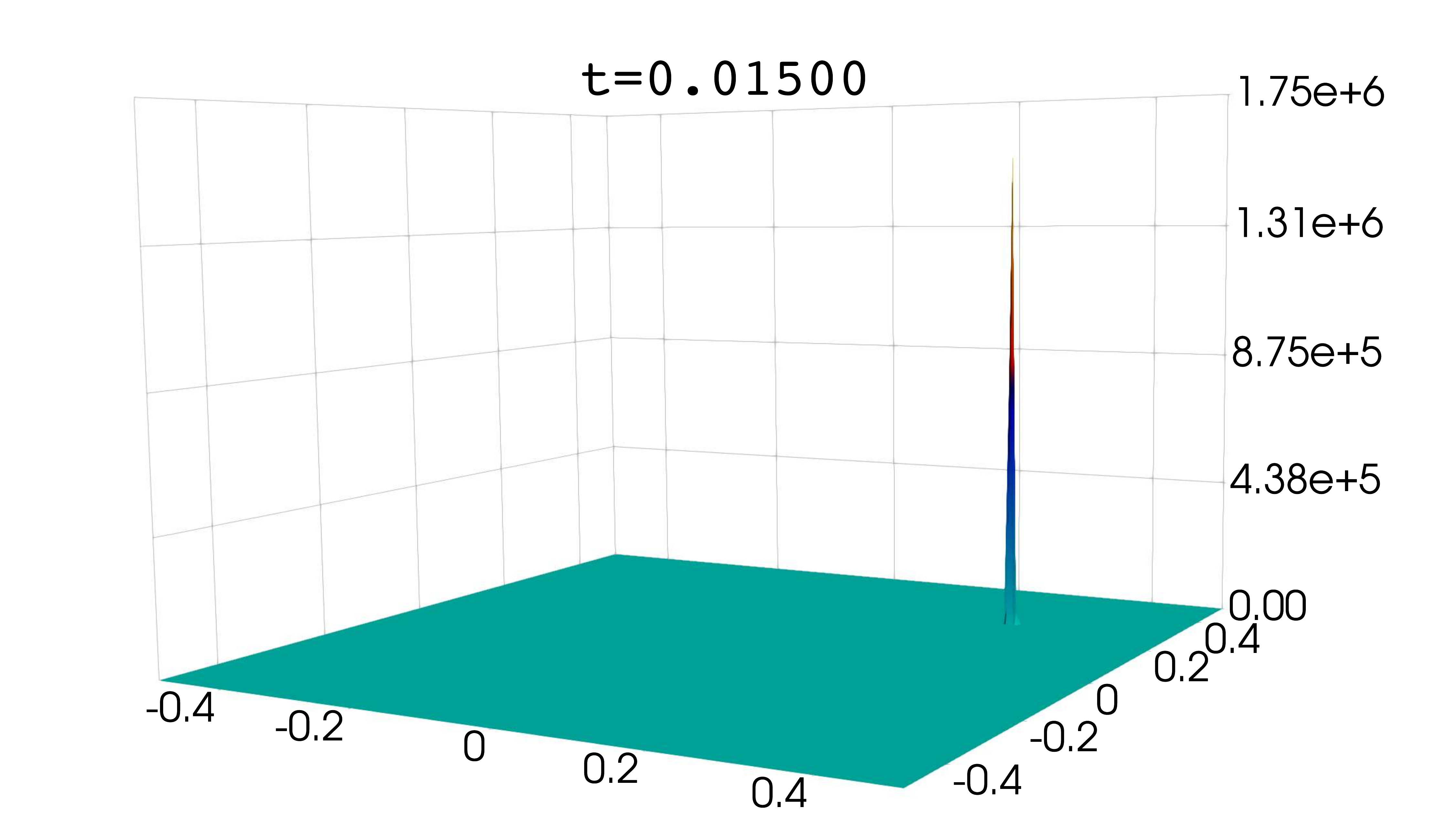}
	\end{subfigure}
	\caption{Aggregation of three cell bulges with $h\approx7.07\cdot 10^{-3}$.}
	\label{fig:u_saito_2}
\end{figure}

\begin{figure}
	\centering
	\boldmath{$v$}
	
	\begin{subfigure}{0.49\textwidth}
		\centering
		\includegraphics[scale=0.11]{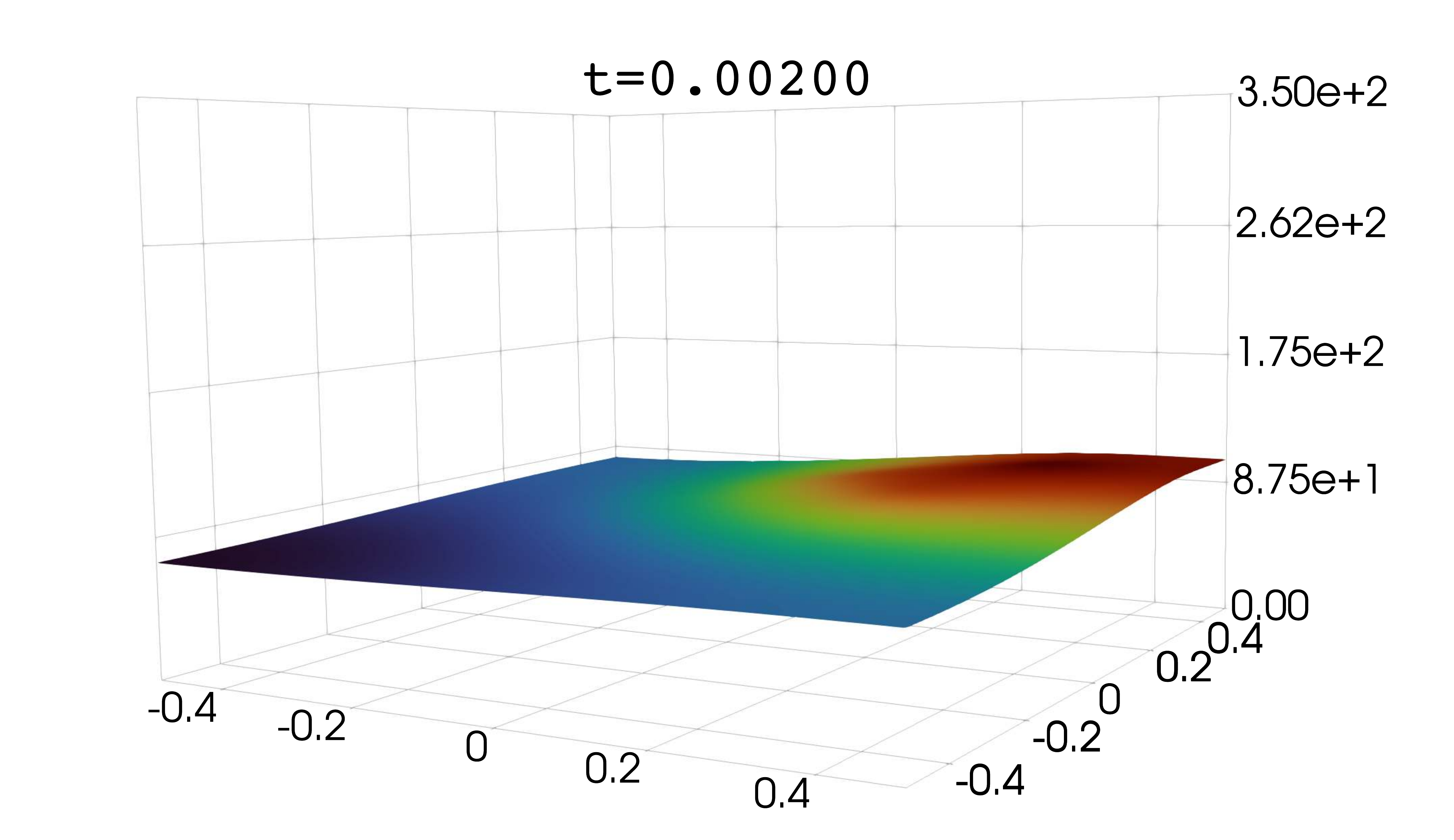}
	\end{subfigure}
	\begin{subfigure}{0.49\textwidth}
		\centering
		\includegraphics[scale=0.11]{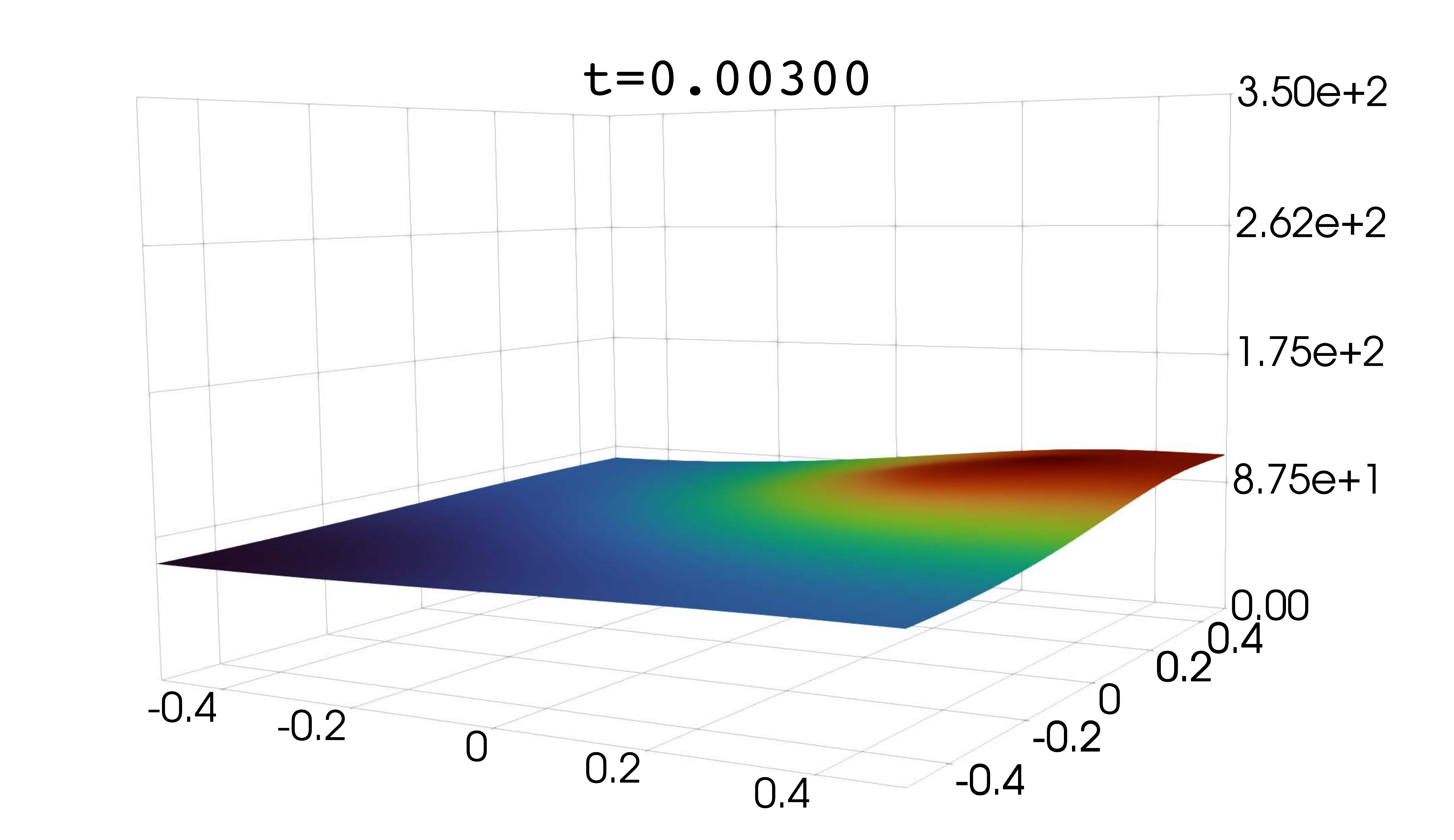}
	\end{subfigure}
	\begin{subfigure}{0.49\textwidth}
		\centering
		\includegraphics[scale=0.11]{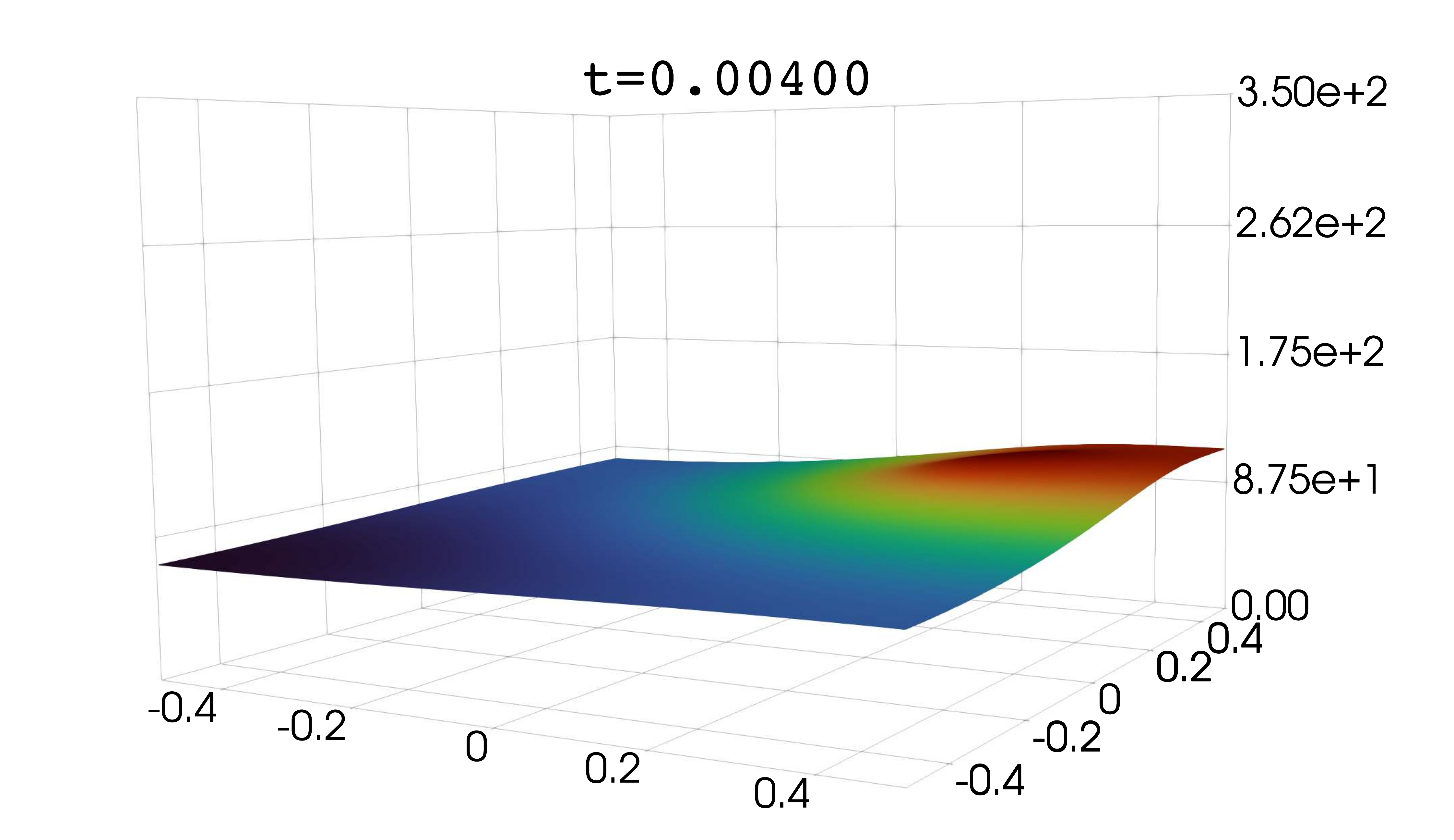}
	\end{subfigure}
	\begin{subfigure}{0.49\textwidth}
		\centering
		\includegraphics[scale=0.11]{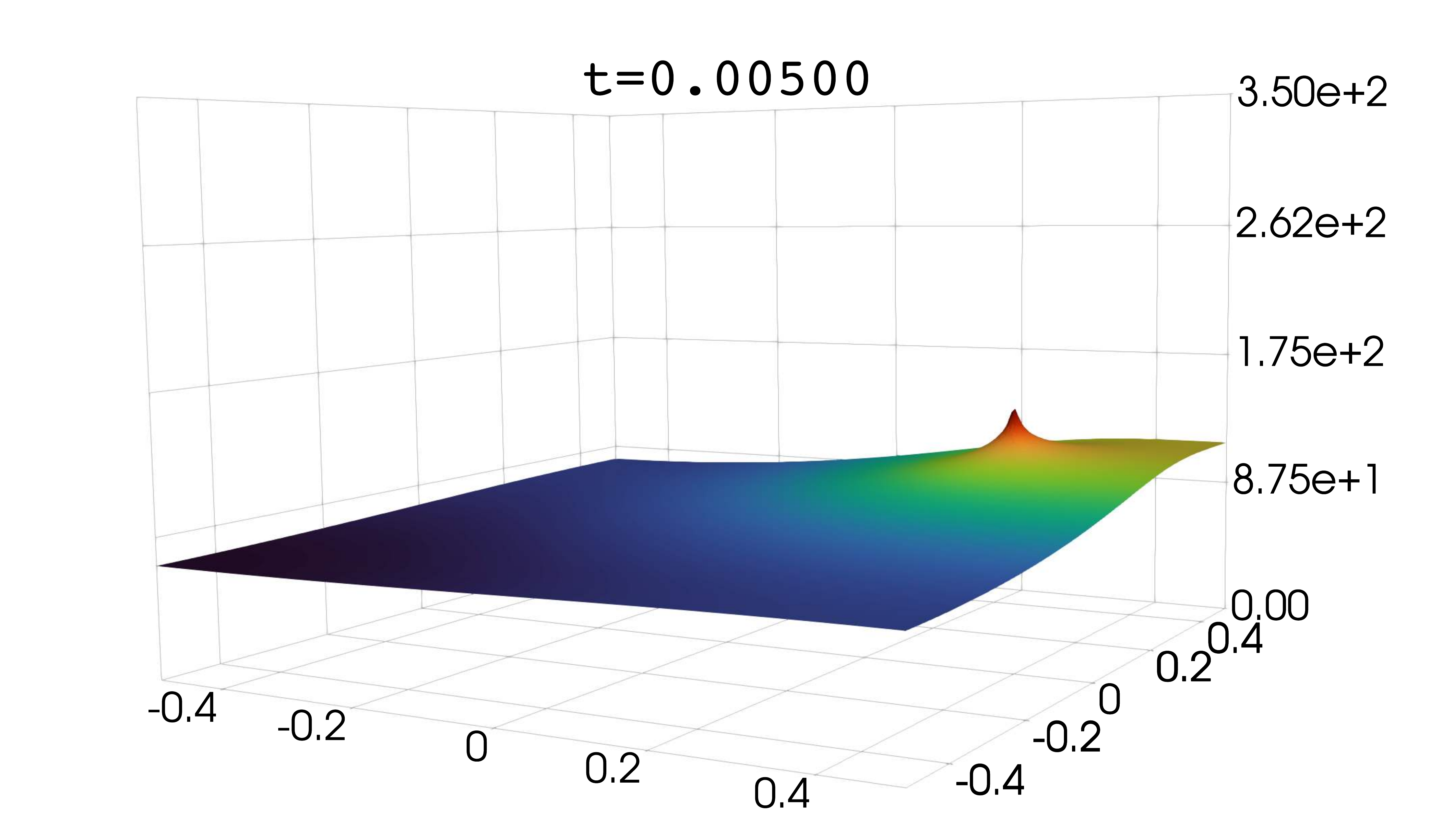}
	\end{subfigure}
	\begin{subfigure}{0.49\textwidth}
		\centering
		\includegraphics[scale=0.11]{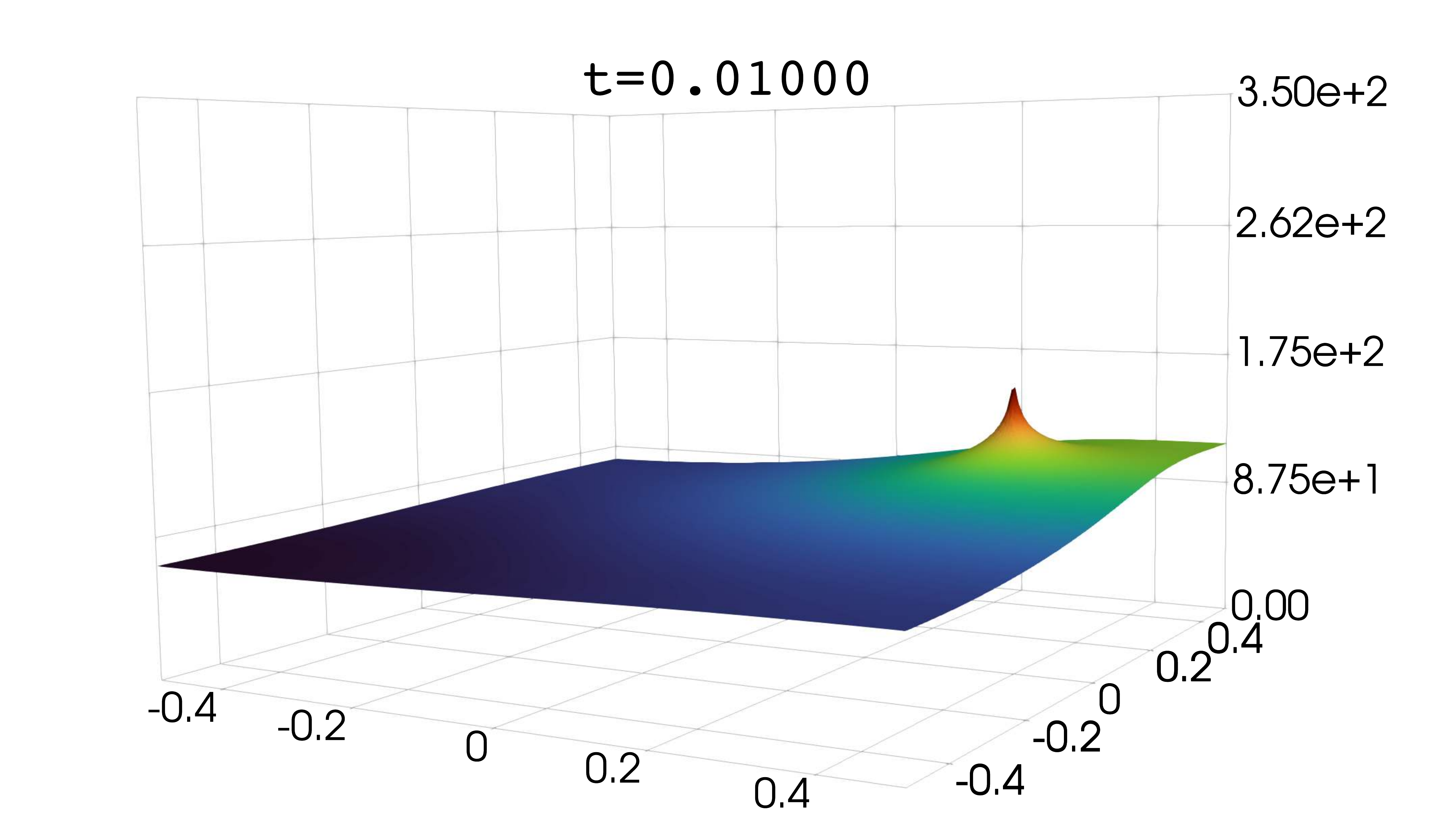}
	\end{subfigure}
	\begin{subfigure}{0.49\textwidth}
		\centering
		\includegraphics[scale=0.11]{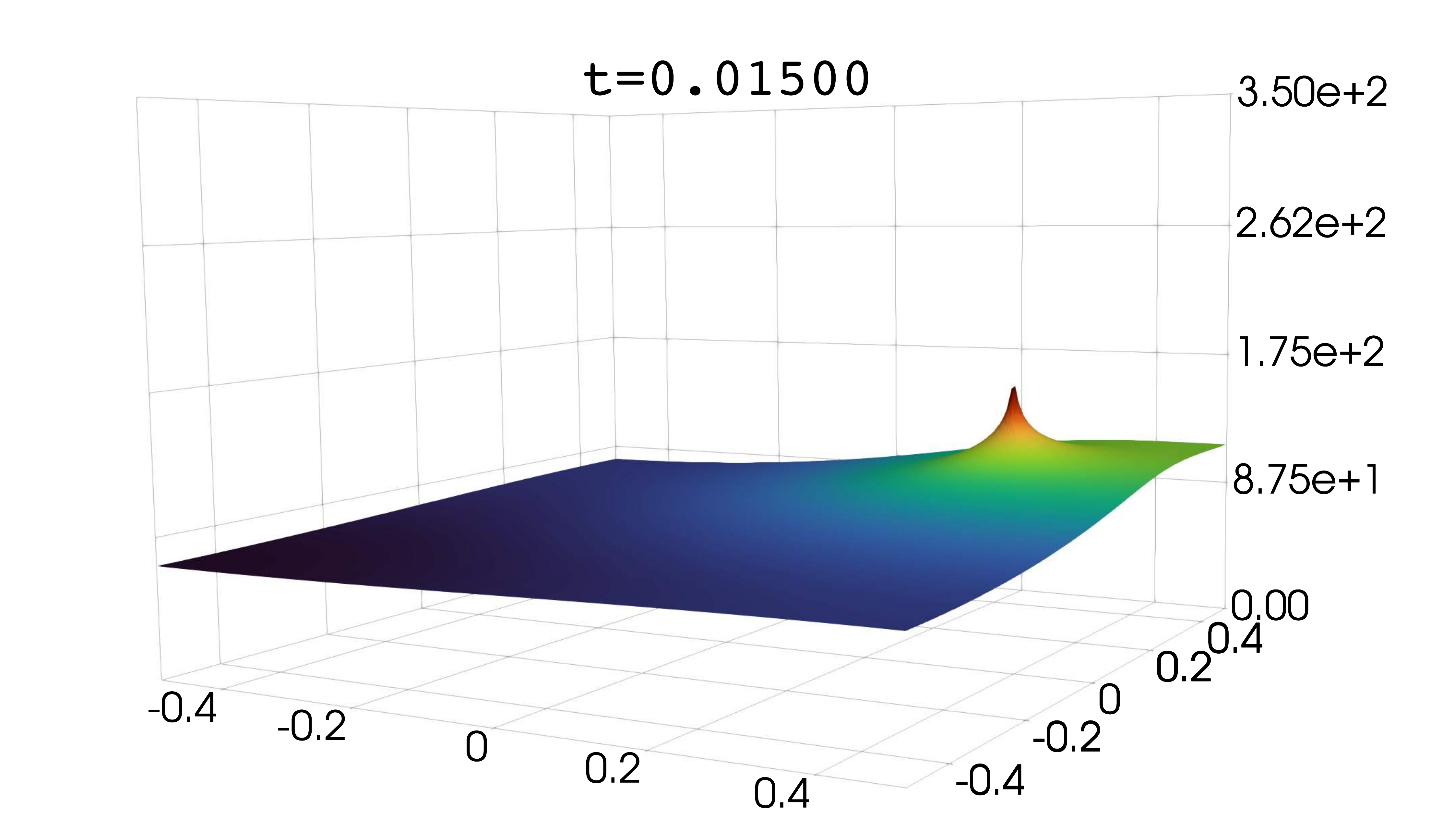}
	\end{subfigure}
	\caption{Chemoattractant in the case of three cell bulges with $h\approx7.07\cdot 10^{-3}$.}
	\label{fig:v_saito_2}
\end{figure}

In both cases, the positivity is preserved and the energy decreases in the discrete case. See Figures~\ref{fig:min-max_saito_1} and~\ref{fig:energy_saito} (left) for the case $h\approx2.83\cdot 10^{-2}$ and Figures~\ref{fig:min-max_saito_2} and~\ref{fig:energy_saito} (right) for the case $h\approx7.07\cdot 10^{-3}$.

\begin{figure}
	\centering
	\begin{subfigure}{0.49\textwidth}
		\centering
		\boldmath{$u$}
		
		\includegraphics[scale=0.5]{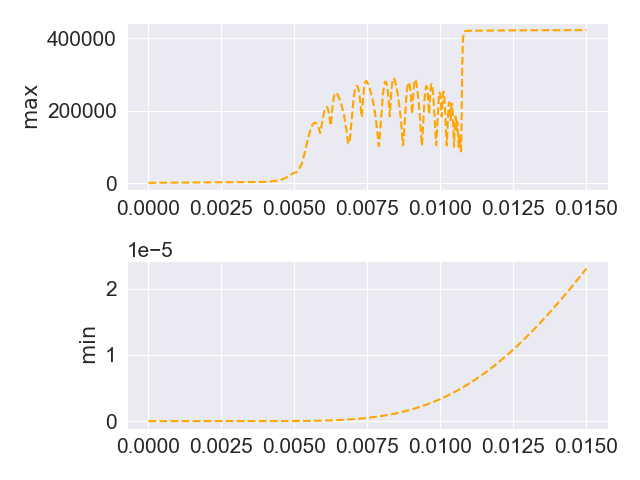}
	\end{subfigure}
	\begin{subfigure}{0.49\textwidth}
		\centering
		\boldmath{$v$}
		
		\includegraphics[scale=0.5]{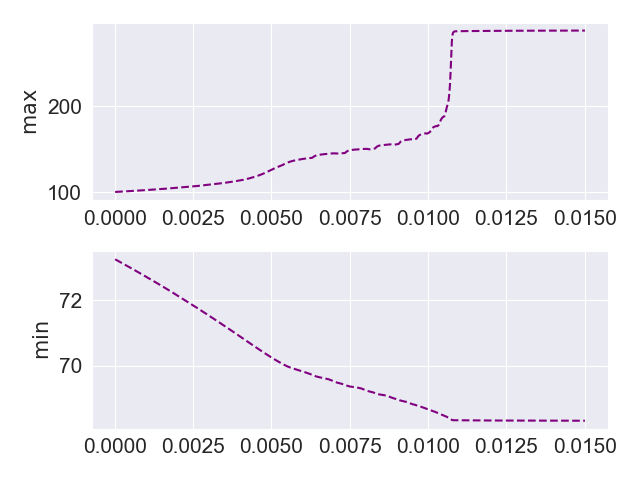}
	\end{subfigure}
	\caption{Minimum and maximum of $u$ and $v$ in the case of aggregation of three cell bulges with $h\approx2.83\cdot 10^{-2}$.}
	\label{fig:min-max_saito_1}
\end{figure}

\begin{figure}
	\centering
	\begin{subfigure}{0.49\textwidth}
		\centering
		\boldmath{$u$}
		
		\includegraphics[scale=0.5]{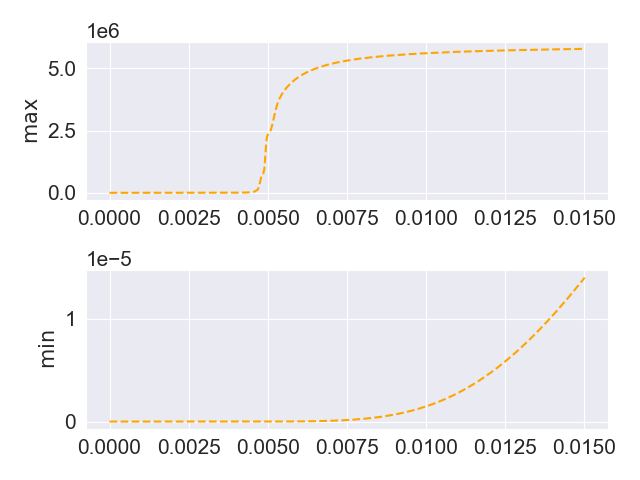}
	\end{subfigure}
	\begin{subfigure}{0.49\textwidth}
		\centering
		\boldmath{$v$}
		
		\includegraphics[scale=0.5]{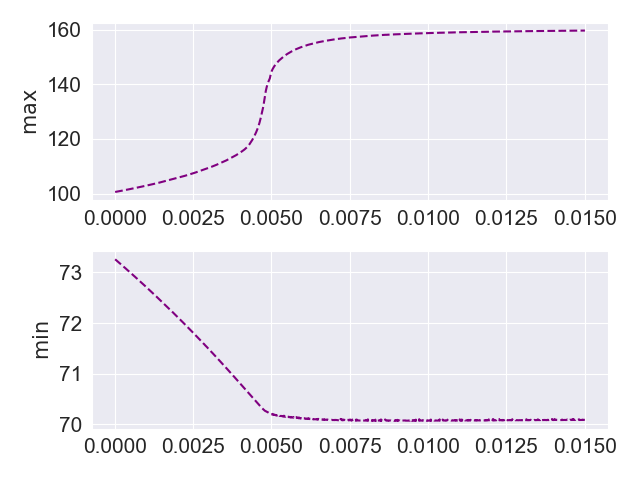}
	\end{subfigure}
	\caption{Minimum and maximum of $u$ and $v$ in the case of aggregation of three cell bulges with $h\approx7.07\cdot 10^{-3}$.}
	\label{fig:min-max_saito_2}
\end{figure}

\begin{figure}
	\centering
	\begin{subfigure}{0.49\textwidth}
		\centering
		\includegraphics[scale=0.5]{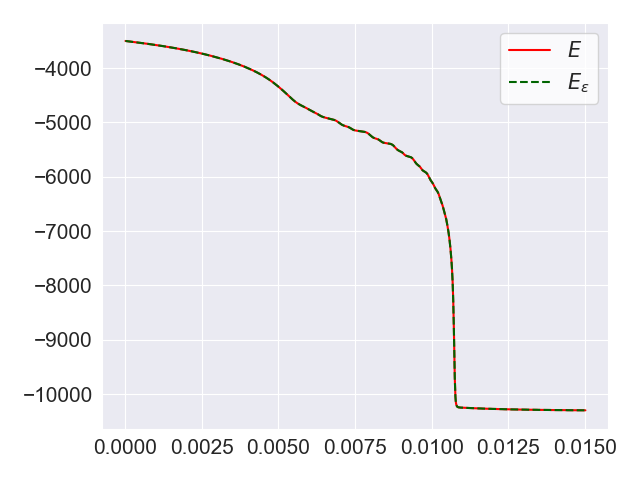}
	\end{subfigure}
	\begin{subfigure}{0.49\textwidth}
		\centering
		\includegraphics[scale=0.5]{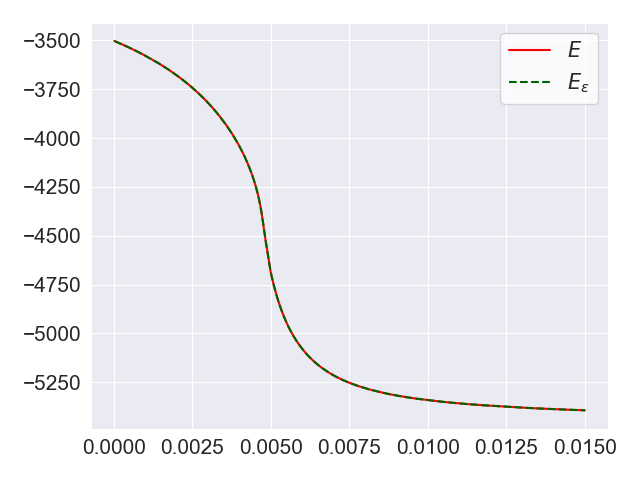}
	\end{subfigure}
	\caption{Discrete energy over time in the case of aggregation of three cell bulges. On the left, $h\approx2.83\cdot 10^{-2}$. On the right, $h\approx7.07\cdot 10^{-3}$.}
	\label{fig:energy_saito}
\end{figure}

\subsection{Pattern formation with multiple peaks}
\label{test3}
Finally, we show the results for a test in which
we obtain a numerical solution describing a pattern with multiple peaks as it occurs, for instance, in the cases that appear in \cite{andreianov_finite_2011, chamoun_monotone_2014, tyson_fractional_2000} for different variations of chemotaxis equations and in \cite{fatkullin_study_2013} for the Keller-Segel equations. For this purpose, we consider the initial conditions
\begin{equation*}
u_0 = 1000 (\cos(2\pi x)\cos(2\pi y) + 1), \quad
v_0 = 500 (\sin(3\pi x)\sin(3\pi y) + 1),
\end{equation*}
which are plotted in Figure~\ref{fig:ic_sin-cos}.

\begin{figure}
	\centering
	\begin{subfigure}{0.49\textwidth}
		\centering
		\boldmath{$u_0$}
		
		\includegraphics[scale=0.11]{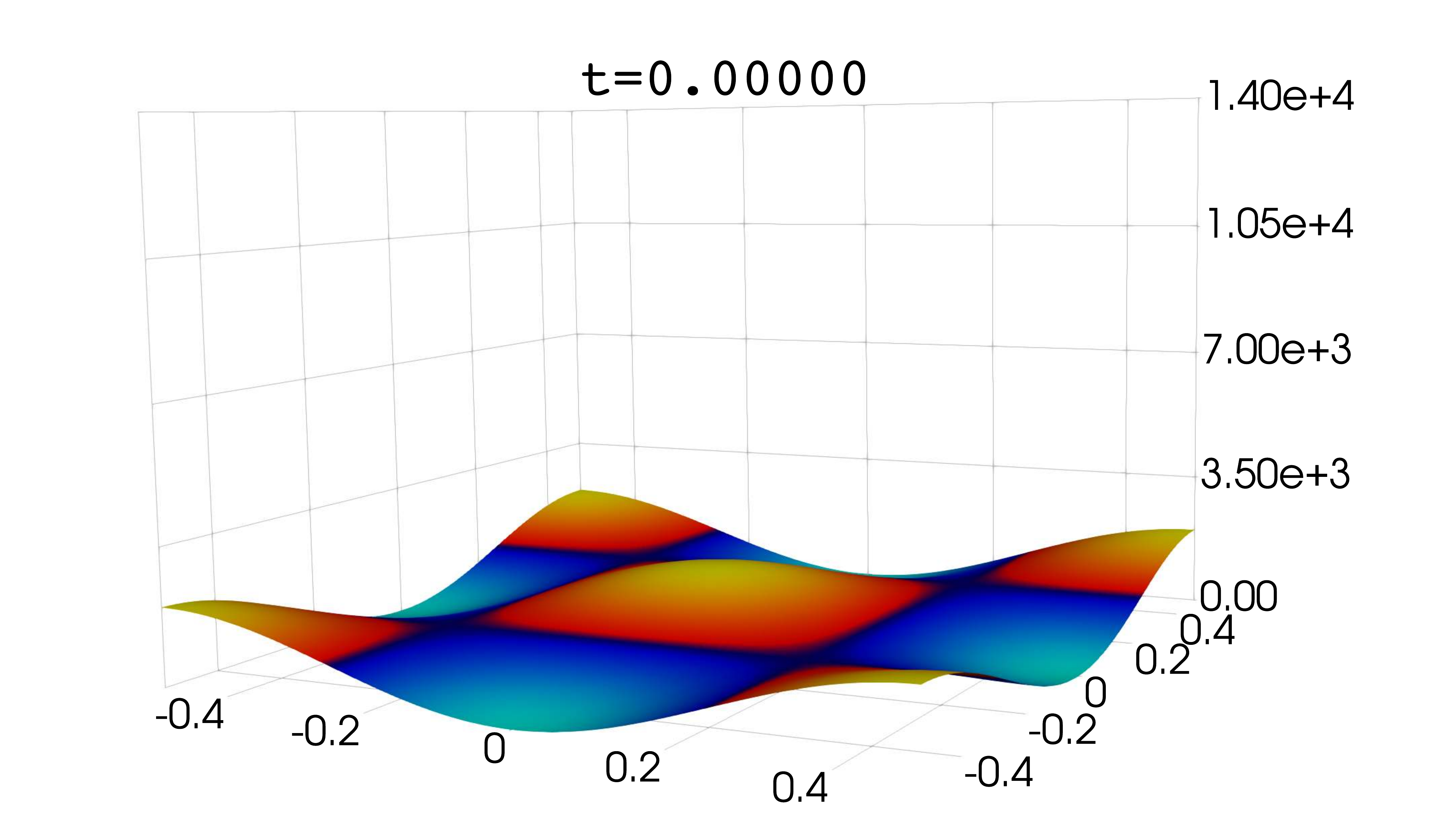}
	\end{subfigure}
	\begin{subfigure}{0.49\textwidth}
		\centering
		\boldmath{$v_0$}
		
		\includegraphics[scale=0.11]{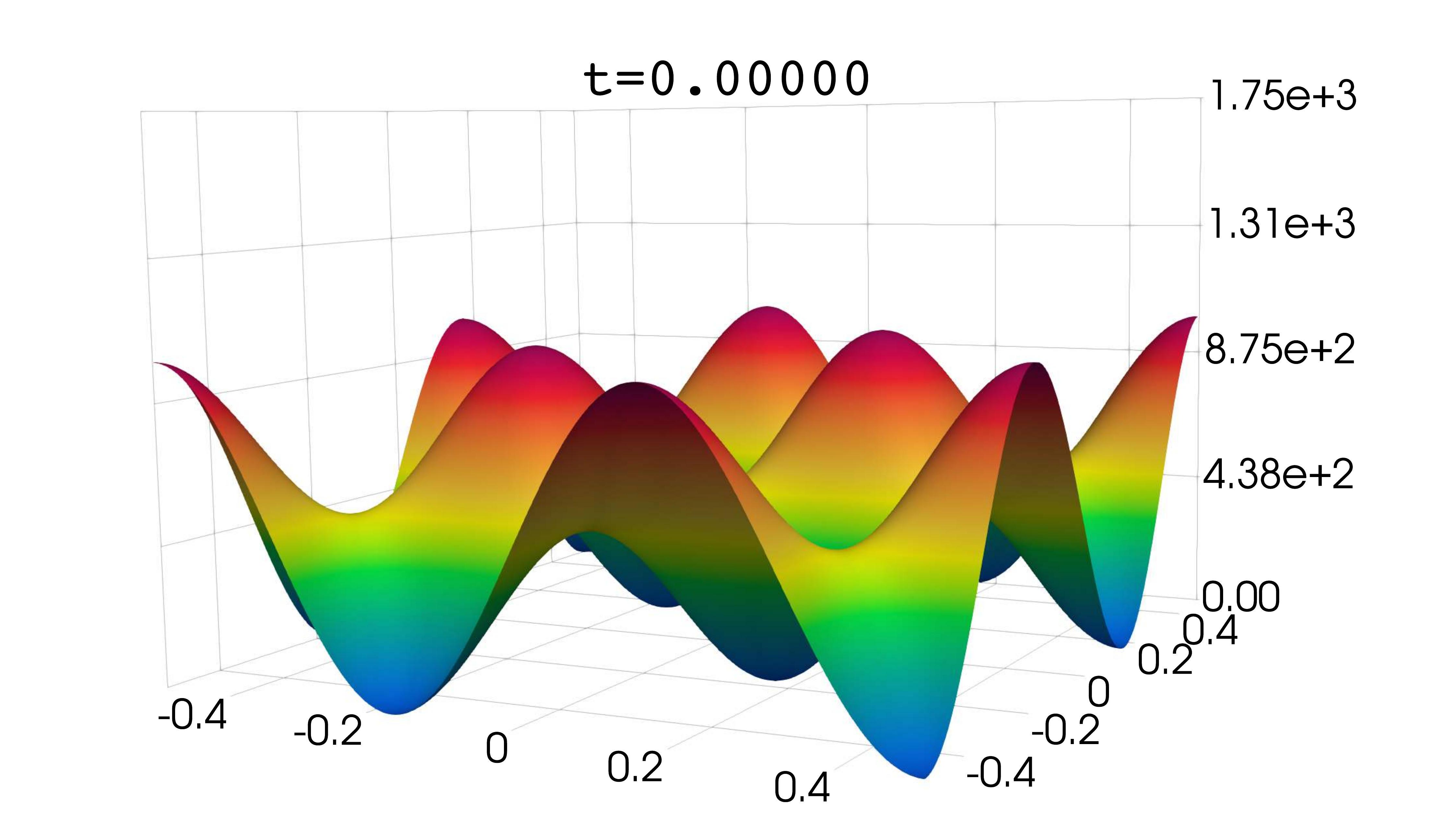}
	\end{subfigure}
	\caption{Initial conditions for the pattern formation with multiple peaks (different scales are used for $u$ and $v$).}
	\label{fig:ic_sin-cos}
\end{figure}

In Figures~\ref{fig:u_sin-cos} and \ref{fig:v_sin-cos} we can observe the result of the test with $h\approx3.54\cdot 10^{-3}$ and $\Delta t=10^{-7}$. Notice that we obtain 8 peaks of cells that reach very high values (up to values of order $10^{7}$), which may be due to a blow-up phenomenon occurring at a certain finite time $t^*$ close to $10^{-4}$.

\begin{figure}
	\centering
	\boldmath{$u$}
	
	\begin{subfigure}{0.49\textwidth}
		\centering
		\includegraphics[scale=0.11]{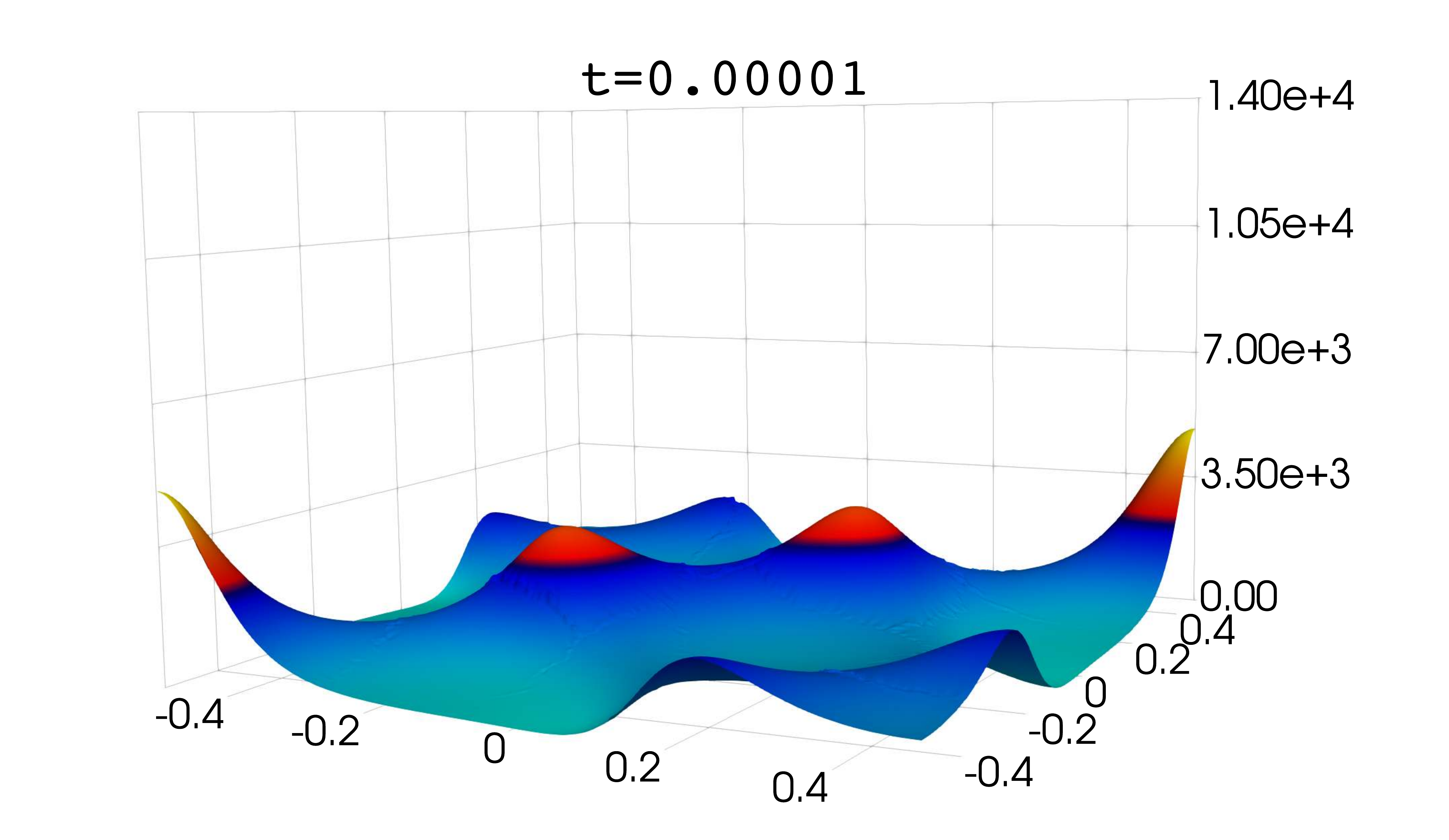}
	\end{subfigure}
	\begin{subfigure}{0.49\textwidth}
		\centering
		\includegraphics[scale=0.11]{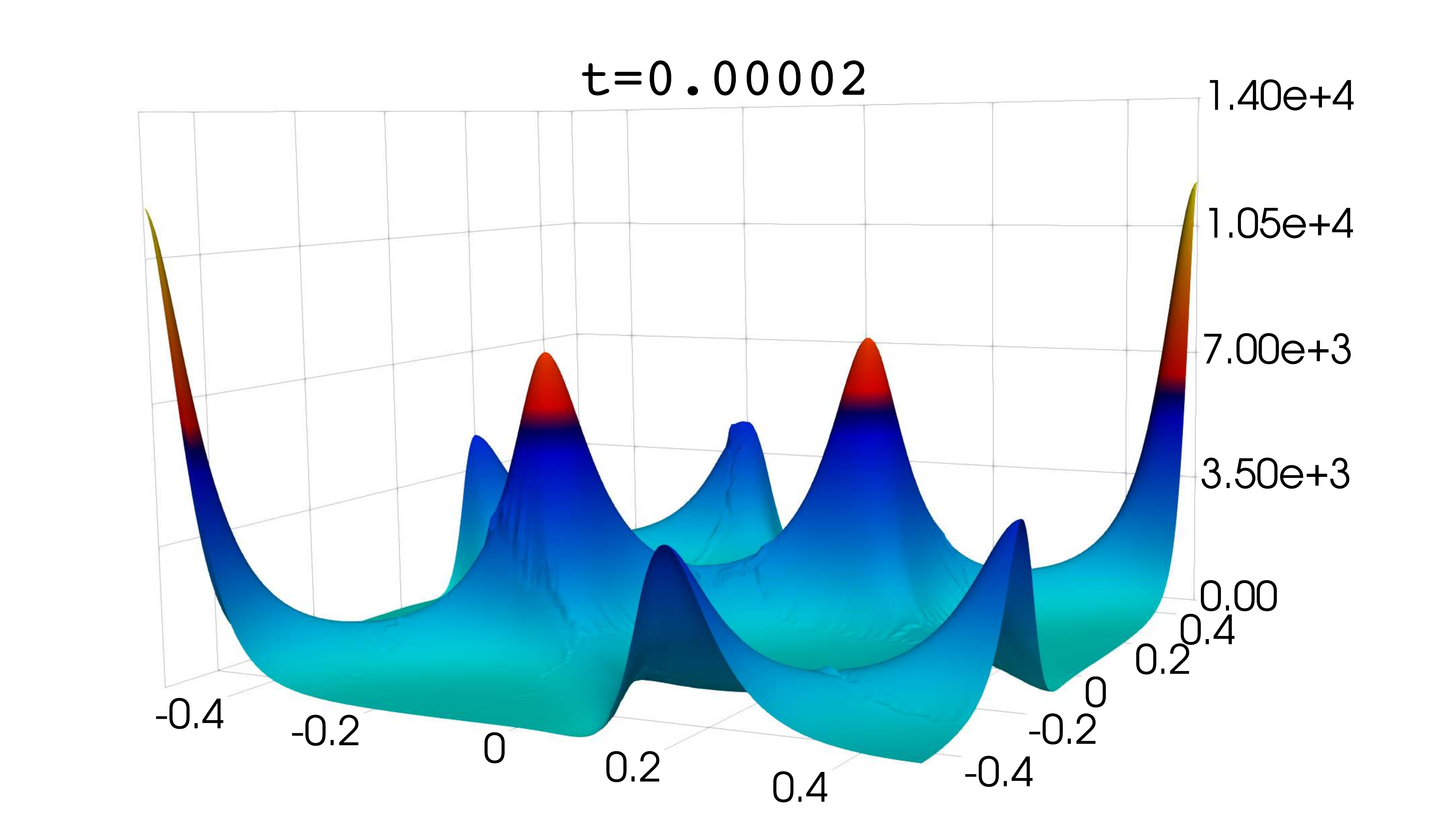}
	\end{subfigure}
	\begin{subfigure}{0.49\textwidth}
		\centering
		\includegraphics[scale=0.11]{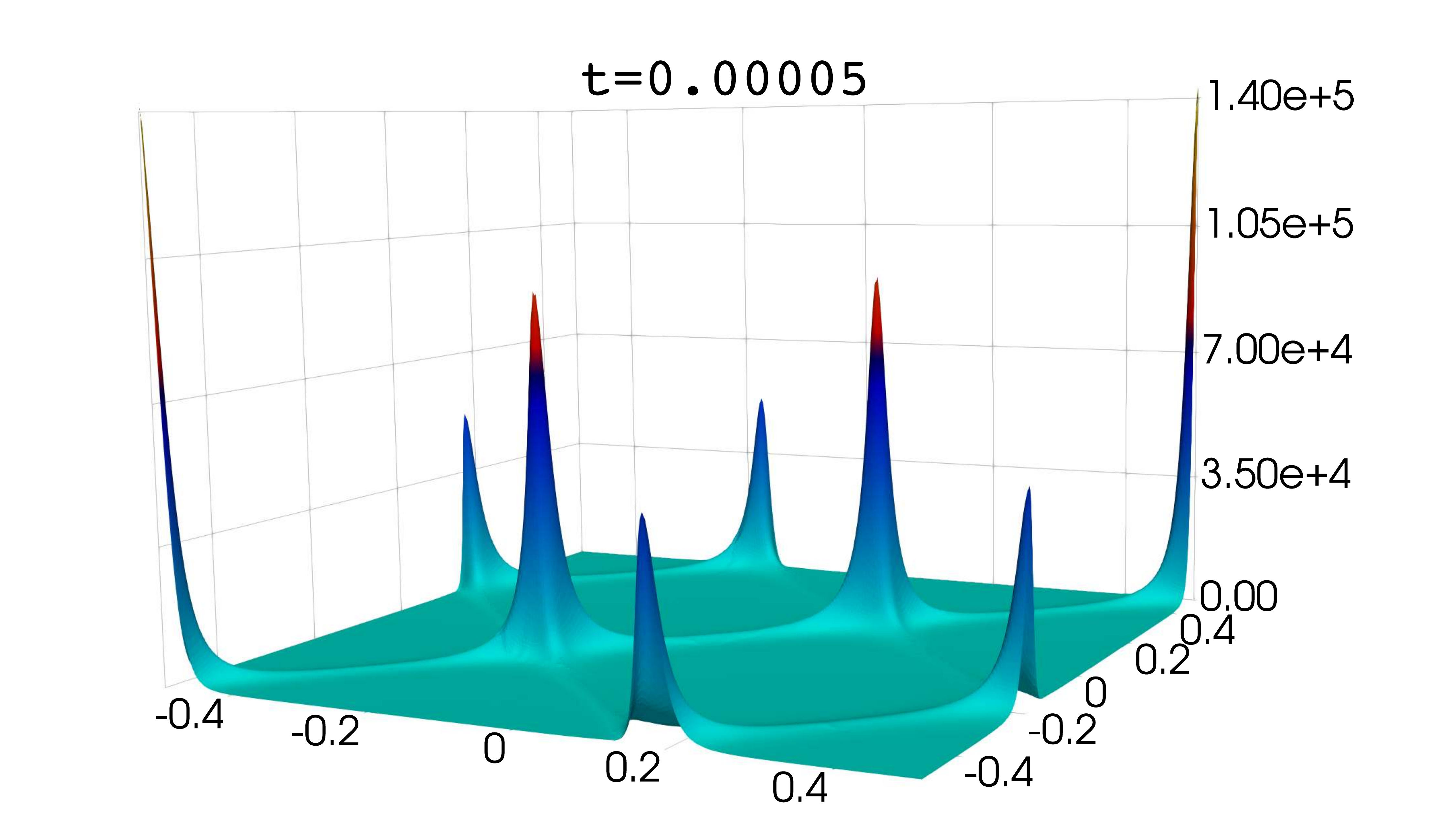}
	\end{subfigure}
	\begin{subfigure}{0.49\textwidth}
		\centering
		\includegraphics[scale=0.11]{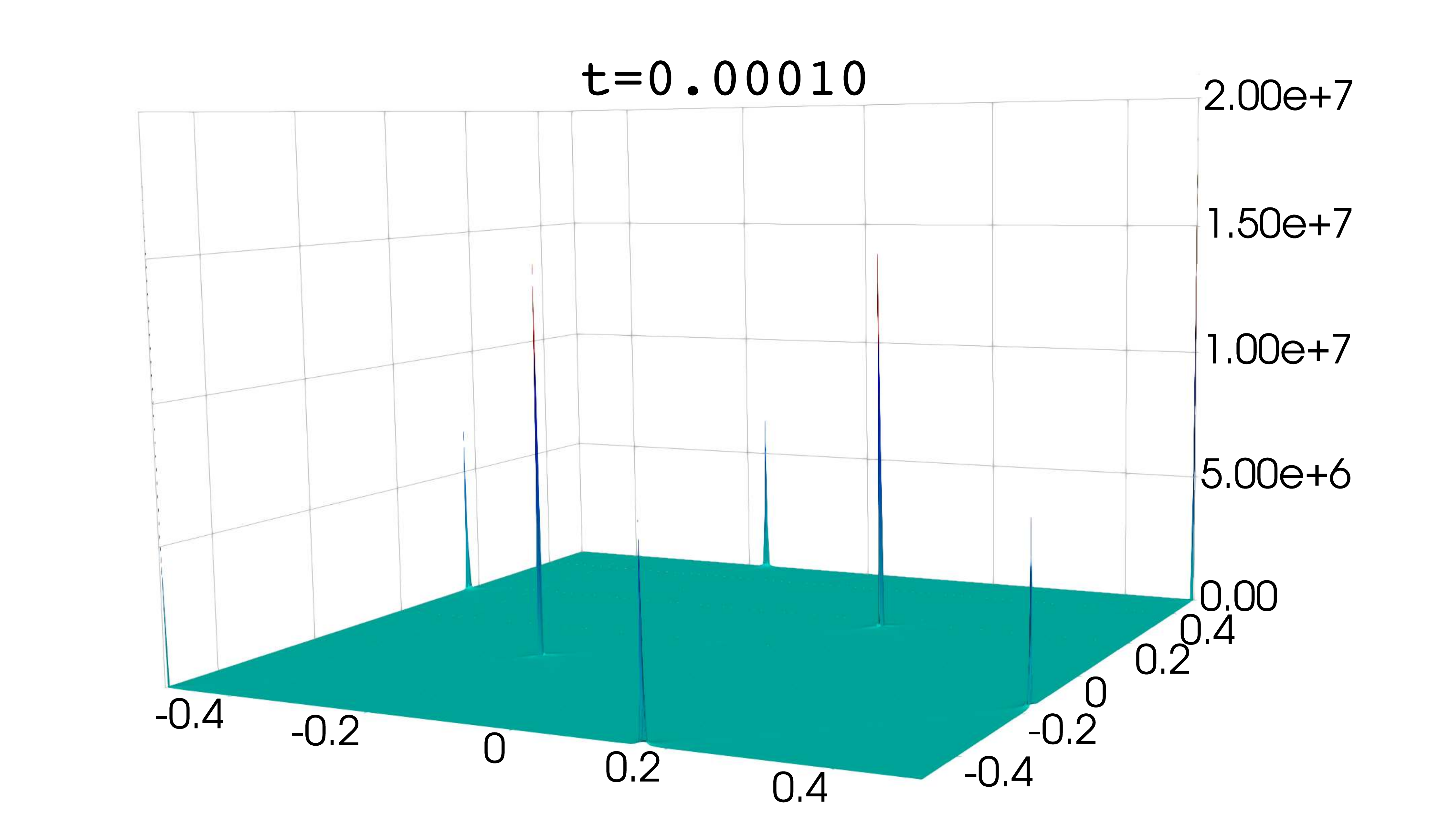}
	\end{subfigure}
	\caption{Pattern formation of $u$ with multiple peaks.}
	\label{fig:u_sin-cos}
\end{figure}

\begin{figure}
	\centering
	\boldmath{$v$}
	
	\begin{subfigure}{0.49\textwidth}
		\centering
		\includegraphics[scale=0.11]{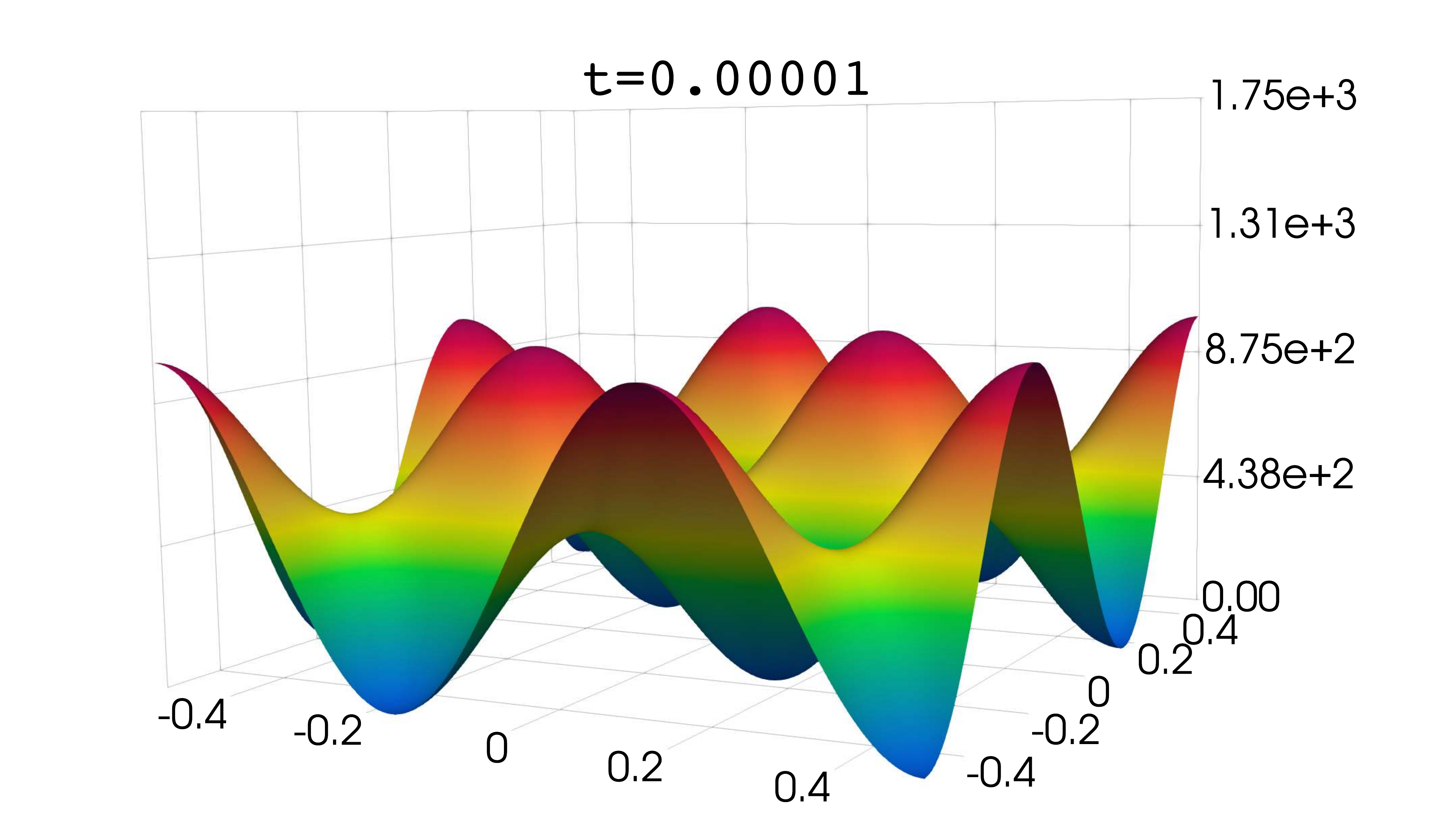}
	\end{subfigure}
	\begin{subfigure}{0.49\textwidth}
		\centering
		\includegraphics[scale=0.11]{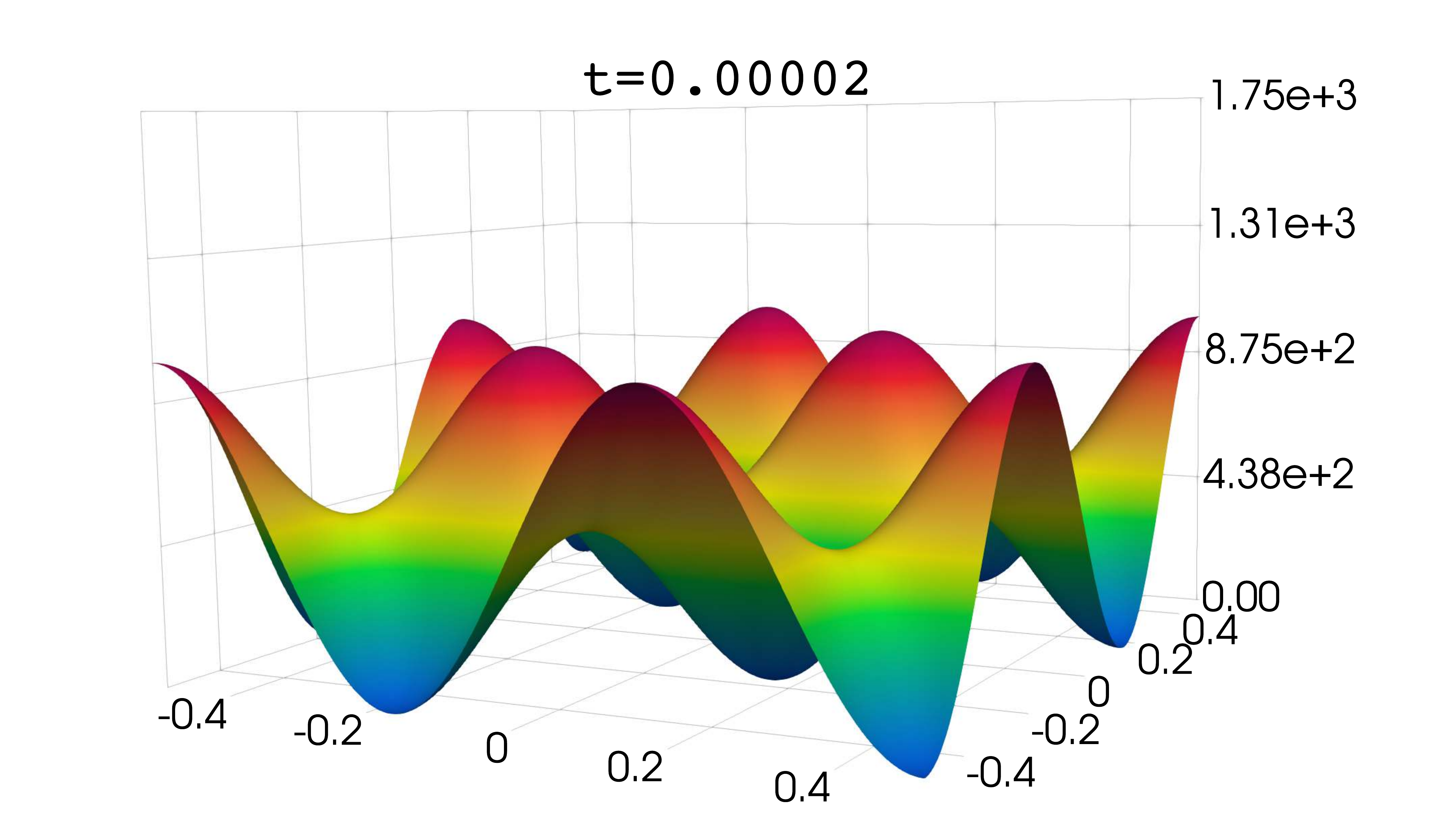}
	\end{subfigure}
	\begin{subfigure}{0.49\textwidth}
		\centering
		\includegraphics[scale=0.11]{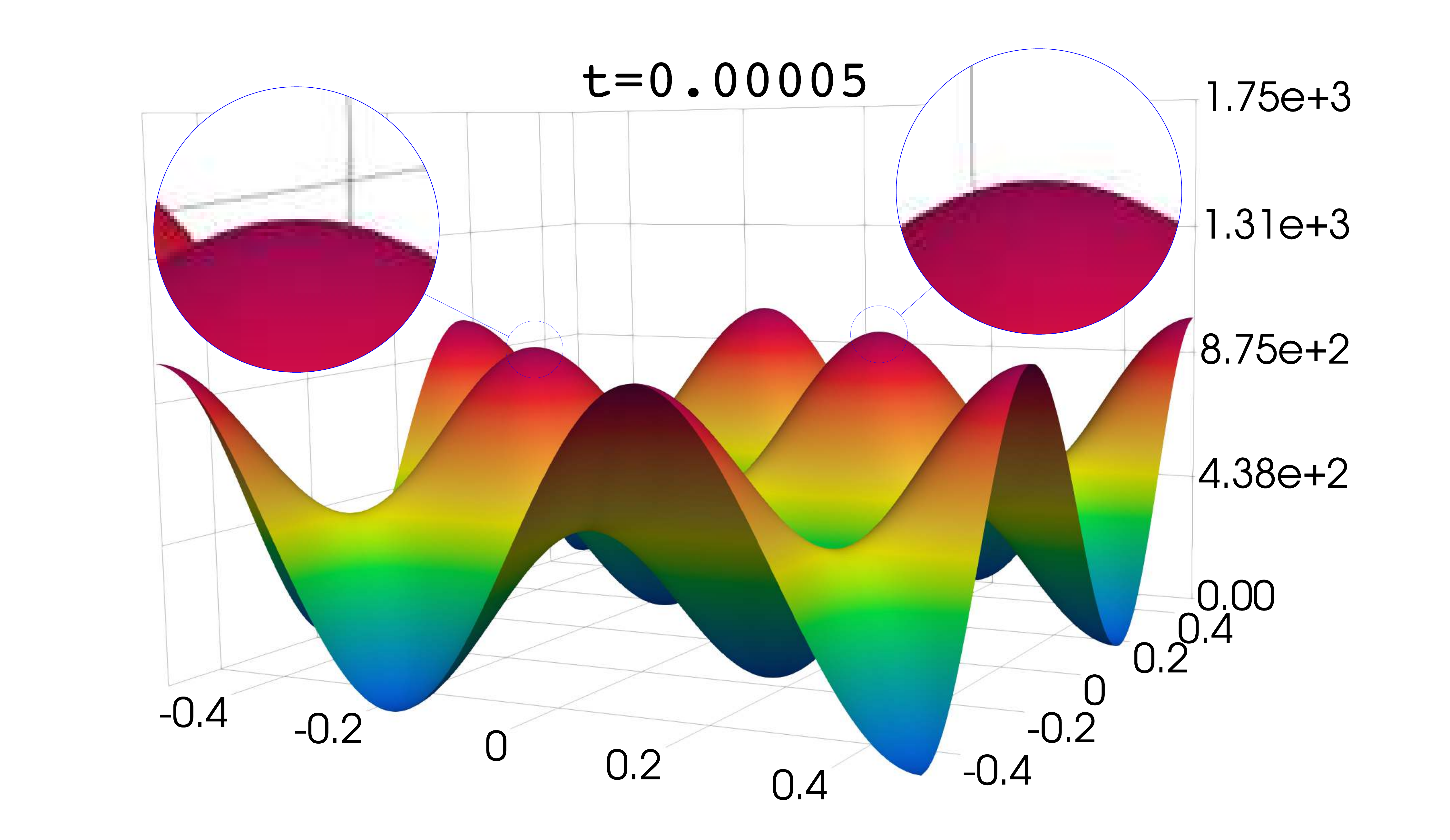}
	\end{subfigure}
	\begin{subfigure}{0.49\textwidth}
		\centering
		\includegraphics[scale=0.11]{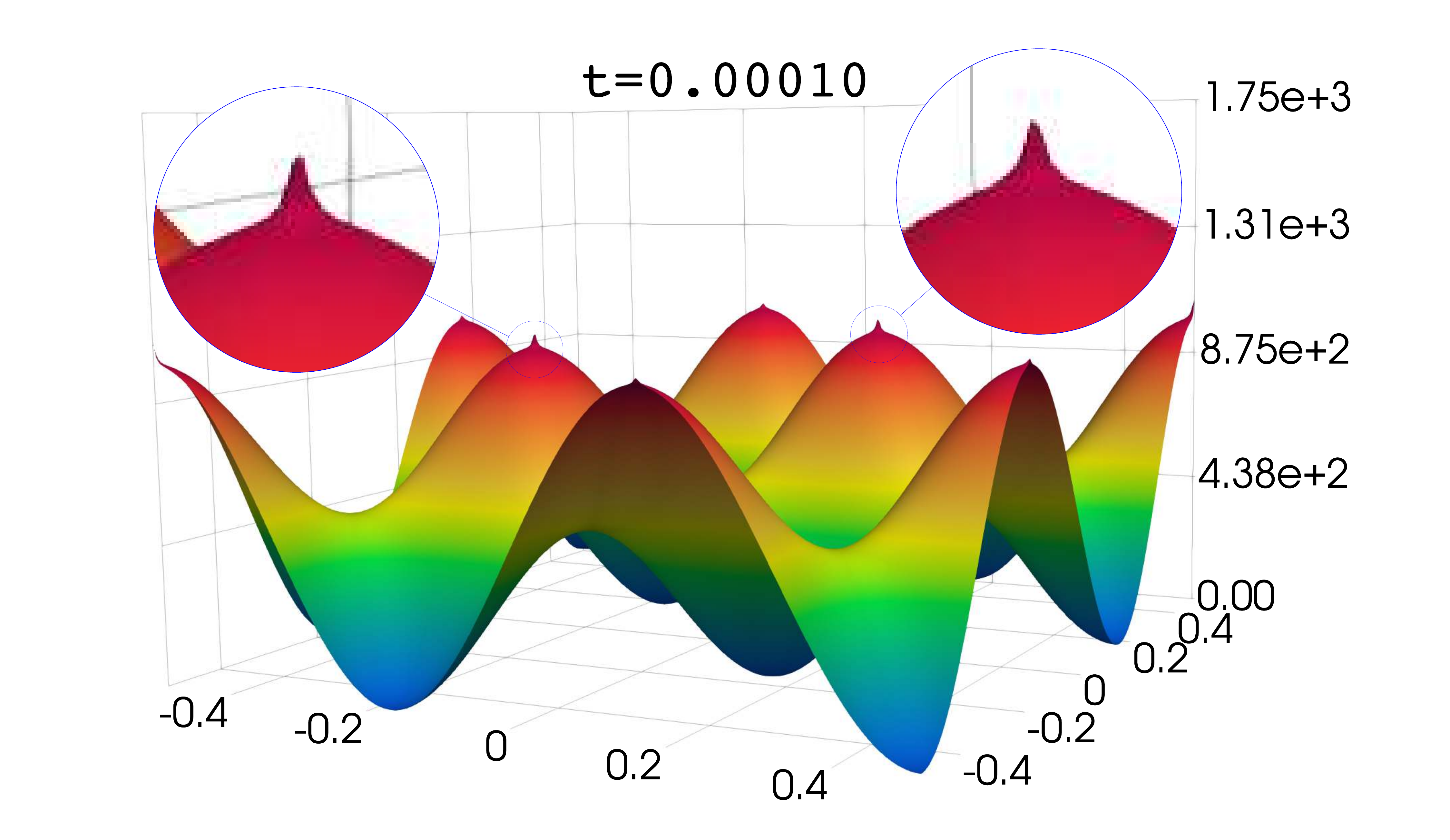}
	\end{subfigure}
	\caption{Pattern formation of $v$ with multiple peaks.}
	\label{fig:v_sin-cos}
\end{figure}

Again, the positivity is preserved as shown in Figure~\ref{fig:min-max_sin-cos} and the energy decreases in the discrete case as in Figure~\ref{fig:energy_sin-cos}.

\begin{figure}
	\centering
	\begin{subfigure}{0.49\textwidth}
		\centering
		\boldmath{$u$}
		
		\includegraphics[scale=0.5]{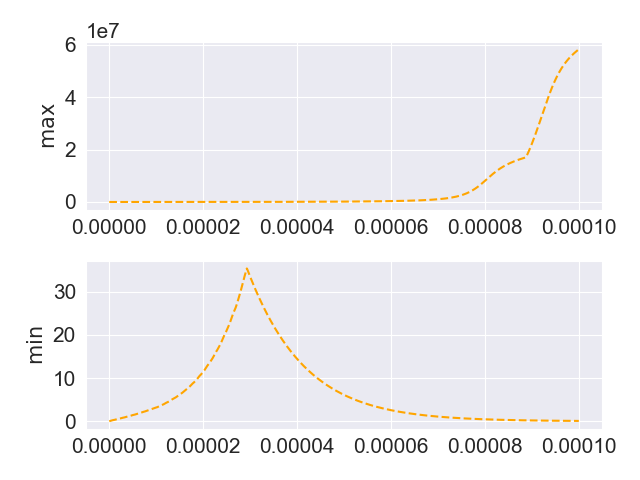}
	\end{subfigure}
	\begin{subfigure}{0.49\textwidth}
		\centering
		\boldmath{$v$}
		
		\includegraphics[scale=0.5]{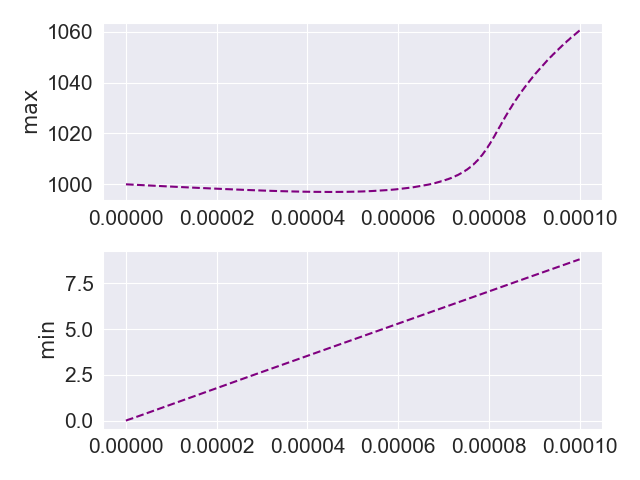}
	\end{subfigure}
	\caption{Minimum and maximum of $u$ and $v$ in the case of
	multiple peaks.}
	\label{fig:min-max_sin-cos}
\end{figure}

\begin{figure}
	\centering
	\includegraphics[scale=0.5]{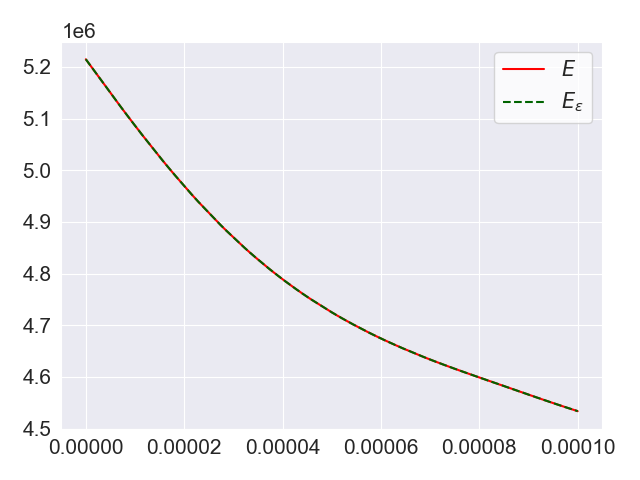}
	\caption{Discrete energy over time in the case of
	multiple peaks.}
	\label{fig:energy_sin-cos}
\end{figure}

\begin{remark}
	\label{rmk:sin-cos}
	This test was also computed with $h\approx 2.828\cdot 10^{-2}$, $\Delta t=2.5\cdot 10^{-6}$ and values of $\varepsilon$ lower than $10^{-10}$. In this case, the minimum value of $u$ tends to $0$ (see Figure~\ref{fig:min-max_sin-cos}), hence \eqref{esquema_DG_upw_KS_non_truncated} is not well suited for too small values of $\varepsilon$ and the convergence of Newton's method (or other iterative methods) to approximate the solution of the nonlinear schemes is not guaranteed as $\lim_{u\to 0}\log(u)=\infty$. Therefore, regularizing the chemical potential of $u$ eases the convergence of the numerical method while only introducing a small error as shown by the results in Table~\ref{table:test_3_eps}.

 	In this table, the difference in $L^2$ and $L^\infty$ norms between the approximation with $\varepsilon=0$ and the approximations with greater values of $\varepsilon$ at $t=2.5\cdot 10^{-4}$ and $t=5.25\cdot 10^{-4}$ (last time step before Newton's method stop converging with $\varepsilon=0$) are shown. In the range of values for $\varepsilon$ taken, $\varepsilon=10^{-12}$ is the lowest value for which Newton's method converges during the $4000$ time iterations computed. The minimum value of $u$ achieved at $t=0.01$, the last time step computed, with $\varepsilon=10^{-12}$ is of order $10^{-167}$. Newton's method stops converging with $\varepsilon=10^{-14}$ at $t=8.275\cdot 10^{-4}$ and at $t=5.525\cdot 10^{-4}$ with $\varepsilon=10^{-16}$ (same time step than with smaller $\varepsilon$ values including $\varepsilon=0$).
\end{remark}

\begin{table}
	\centering
	\begin{tabular}{||c|c||c|c||c|c||}
		\hline
		\multirow{2}*{$t$} & \multirow{2}*{$\varepsilon$} & \multicolumn{2}{|c||}{$\norma{\cdot}_{L^2}$} & \multicolumn{2}{|c||}{$\norma{\cdot}_{L^\infty}$}\\
		\cline{3-6}
		& & $u$ & $v$ & $u$ & $v$\\
		\hline\hline
		\multirow{4}*{$2.5\cdot 10^{-4}$} & $10^{-10}$ & $7.41\cdot 10^{-11}$ & $1.87\cdot 10^{-13}$ & $2.71\cdot10^{-9}$ & $1.14\cdot 10^{-12}$ \\
		\cline{2-6}
		& $10^{-12}$ & $3.56\cdot 10^{-12}$  & $0.0$ & $2.33\cdot 10^{-10}$ & $0.0$\\
		\cline{2-6}
		& $10^{-14}$ & $8.24\cdot 10^{-13}$ & $0.0$ & $5.82e\cdot 10^{-11}$ & $0.0$ \\
		\cline{2-6}
		& $10^{-16}$ & $2.59\cdot 10^{-14}$ & $0.0$ & $1.82e\cdot 10^{-12}$ & $0.0$ \\
		\hline\hline
		\multirow{4}*{$5.25\cdot 10^{-4}$} & $10^{-10}$ & $2.14\cdot 10^{-10}$ & $6.18\cdot 10^{-13}$ & $7.30\cdot 10^{-9}$ & $3.75\cdot 10^{-12}$ \\
		\cline{2-6}
		& $10^{-12}$ & $5.44\cdot 10^{-12}$ & $2.55\cdot 10^{-13}$ & $2.33\cdot 10^{-10}$ & $1.71\cdot 10^{-12}$ \\
		\cline{2-6}
		& $10^{-14}$ & $1.16\cdot 10^{-12}$ & $0.0$ & $5.82\cdot 10^{-11}$ & $0.0$ \\
		\cline{2-6}
		& $10^{-16}$ & $6.70\cdot 10^{-15}$ & $0.0$ & $4.55\cdot 10^{-13}$ & $0.0$ \\
		\hline
	\end{tabular}
	\caption{Difference between approximations of the test in subsection~\ref{test3} ($h\approx 2.828\cdot 10^{-2}$, $\Delta t=2.5\cdot 10^{-6}$) with respect to the solution with $\varepsilon=0$.}
	\label{table:test_3_eps}
\end{table}

\section*{Acknowledgments}
The first author has been supported by \textit{UCA FPU contract UCA/REC14VPCT/2020 funded by Universidad de Cádiz} and by a \textit{Graduate Scholarship funded by the University of Tennessee at Chattanooga}. The second and third authors have been supported by \textit{Proyecto PGC2018-098308-B-I00, funded by FEDER/Ministerio de Ciencia e Innovación - Agencia Estatal de Investigación, Spain}.

\bibliography{biblio_database}

\end{document}